\documentclass[a4paper,11pt]{amsart}
\usepackage{amsmath}
\usepackage{cases}
\usepackage{amsfonts}
\usepackage[colorlinks,linkcolor=blue,citecolor=blue]{hyperref}
\usepackage{latexsym, amssymb, amsmath, amsthm, bbm}
\usepackage[all]{xy}
\usepackage{pgfplots}
\usepackage{mathrsfs}
\usepackage [latin1]{inputenc}

\DeclareSymbolFont{EulerExtension}{U}{euex}{m}{n}
\DeclareMathSymbol{\euintop}{\mathop} {EulerExtension}{"52}
\DeclareMathSymbol{\euointop}{\mathop} {EulerExtension}{"48}

\allowdisplaybreaks[4]

\setlength{\textwidth}{5.6truein}
\setlength{\textheight}{8.2truein}
\setlength{\topmargin}{-0.13truein}
\setlength{\parindent}{0pt}
\addtolength{\parskip}{5pt}

\def \id{\operatorname{Id}}
\def \ker{\operatorname{Ker}}

\def \Hom{\operatorname{Hom}}

\def \Id{\operatorname{Id}}

\def \id{\operatorname{Id}}
\def \ker{\operatorname{Ker}}

\numberwithin{equation}{section}

\newtheorem{theorem}{Theorem}[section]
\newtheorem{lemma}[theorem]{Lemma}
\newtheorem{proposition}[theorem]{Proposition}
\newtheorem{corollary}[theorem]{Corollary}
\newtheorem{definition}[theorem]{Definition}
\newtheorem{example}[theorem]{Example}
\newtheorem{remark}[theorem]{Remark}

\newtheorem{convention}[theorem]{Convention}

\begin{document}
\title{Quasitriangular Hopf algebras of dimension $pq^2$}
\thanks{$^\dag$Supported by NSFC 11722016.}

\subjclass[2010]{16T05 (primary), 16T25 (secondary)}
\keywords{Quasitriangular Hopf algebra, Abelian extension.}

\author{Kun Zhou and Gongxiang Liu}
\address{Department of Mathematics, Nanjing University, Nanjing 210093, China} \email{dg1721021@smail.nju.edu.cn gxliu@nju.edu.cn}
\date{}
\maketitle
\begin{abstract}  Let $p$ and $q$ be distinct odd primes and assume $\Bbbk$ is an algebraically closed field of characteristic zero. We classify all
quasitriangular Hopf algebras of dimension $pq^2$ over $\Bbbk$, which are not simple as Hopf algebras. Moreover, we obtained all quasitriangular structures on these Hopf algebras.
\end{abstract}

\section{Introduction}
Quasitriangular Hopf algebras were introduced by Drinfeld \cite{VG} to give solutions of the quantum Yang-Baxter equations. By definition, a quasitriangular Hopf algebra is a Hopf algebras whose finite-dimensional representations form a braided rigid tensor category, which naturally relates to low dimensional topology (see \cite{AR,LH,SS,NV}). Since then, they have been intensively studied.

The classification of finite-dimensional Hopf algebras has attracted the attention
of many mathematicians in recent years. And the classification of finite-dimensional quasitriangular Hopf algebras can be regarded as an important step towards the classification of all finite-dimensional Hopf algebras: indeed, if $H$ is a finite-dimensional Hopf algebra its \emph{Drinfeld double}, $D(H)$, is a quasitriangular Hopf algebra endowed with a Hopf algebra inclusion $H\hookrightarrow D(H)$.

Let $p$ be a prime number. Hopf algebras of dimension $p$ over an algebraically closed field $\Bbbk$ of characteristic zero were shown by Zhu \cite{ZY} to be isomorphic to the group algebra $\Bbbk[\mathbb{Z}_p]$. For a Hopf algebra $H$ over $\Bbbk$ whose dimension is a product of two primes, there are also many results: If $H$ is Hopf algebra  with dimension $p^2$, then $H$ is isomorphic to group algebra or the Taft algebra of dimension $p^2$ (see \cite{MA1,NS}); Now let $p,q$ be distinct primes. If $p=2$, Ng (see \cite{Ng}) proved that $H$ must be semisimple. It is shown by Etingof and Gelaki (see \cite{PS1}) that semisimple Hopf algebra of dimension $pq$ is always trivial, that is, they are isomorphic to group algebras or the duals of group algebras. In general, it is still an open question whether every Hopf algebra with dimension $pq$ is semisimple or not although it is widely believed that this should be true. For the quasitriangular case, S. Natale has showed that all quasitriangular Hopf algebras of dimension $pq$ with $p,q$ are odd primes, are always semisimple and isomorphic to  group algebras (see \cite{Na1}).

The classification of a Hopf algebra whose dimension is three product of primes in general is also open. The study of Hopf algebras of dimension $p^3$ was carried out by Garc\'{i}a \cite{AG} and he proved that the only ribbon Hopf algebras of dimension $p^3$ are group algebras and Frobenius-Lusztig kernels; The Hopf algebras with dimension $2p^2$ ($p$ is an odd prime) has been researched by several experts: Andruskiewitsch and Natale (see \cite{AN}) classified all such non-semisimple pointed Hopf algebras, and Hilgemann-Ng (see \cite{MN}) proved that if a Hopf algebra $H$ of dimension $2p^2$ is not semisimple, then $H$ or $H^{\ast}$ must be pointed. Cheng-Ng (see \cite{CY}) considered  Hopf algebras with dimension $4p$ and proved that every such non-semisimple Hopf algebra $H$  is pointed if and only if $H$ satisfies that $|G(H)|>2$. The non-simple and semisimple Hopf algebras of dimension $pq^2$, where $p,q$ are distinct primes, were classified in \cite{Na2}.

In this paper, we study non-simple quasitriangular Hopf algebras of dimension $pq^2$, where $p,q$ are distinct odd primes. At first, we give a complete list of them.
\begin{theorem}\label{thm1.1}
Let $H$ be a non-simple Hopf algebra of dimension $pq^2$ over $\Bbbk$. Assume that $H$ admits a quasitriangular structure, then $H$ is semisimple and is isomorphic to one of the following Hopf algebras:
\begin{itemize}
  \item[(i)] a group algebra;
  \item[(ii)] either $\mathscr{A}_0$ or $\mathscr{B}_0,\mathscr{B}_{\lambda_j}$ \emph{(}see Examples \ref{ex2.1.5}-\ref{ex2.1.6} for their definitions\emph{)} for $1\leq j \leq (p-1)/2$.
\end{itemize}
\end{theorem}

Then we determine all possible quasitriangular structures on them: All possible quasitriangular structures on a group algebra of dimension $pq^2$ is given by Propositions \ref{pro3.1.y} and \ref{pro3.1.z}; For Hopf algebras  $\mathscr{A}_0$ and $\mathscr{B}_0,\mathscr{B}_{\lambda_j}\;(1\leq j\leq (p-1)/2)$, the corresponding result is given by the next theorem.
\begin{theorem}\label{thm1.2}
With related notations given in Section $2$, we have the following:
\begin{itemize}
             \item[(i)] All the braiding structures on $\mathscr{A}_0$ are given by $\langle e_hg^i,e_k g^j\rangle=\delta_{h,b^j}\delta_{k,b^{-i}}\lambda^{ij}$, where $\lambda\in \Bbbk$ such that $\lambda^q=1$ and $h,k\in \mathbb{Z}_p\rtimes \mathbb{Z}_q$, $0\leq i,j \leq q-1$;
              \item[(ii)] All the quasitriangular structures on $\mathscr{B}_{\lambda}$, $\lambda\in \{0,\lambda_j|\;1\leq j \leq (p-1)/2\}$, are given by $$R=\sum\limits_{\begin{subarray}{l}  0\leq i,j \leq q \\
                             0\leq k,l \leq q  \\
        \end{subarray}}w(a^ib^j,a^kb^l)e_{a^ib^j}\otimes e_{a^kb^l},$$ where $w$ is a bicharacter on $\mathbb{Z}_q \times \mathbb{Z}_q$ and is determined by the following conditions
         \begin{align*}
w(a,a)=w(a^m,a^m), \;w(a,b)=w(a^m,b^{m^{\lambda}})\eta(a^m,b^{m^{\lambda}},g),\\
w(b,a)=w(b^{m^{\lambda}},a^m)\eta(b^{m^{\lambda}},a^m,g),\;w(b,b)=w(b^{m^{\lambda}},b^{m^{\lambda}}).
\end{align*}
\end{itemize}
\end{theorem}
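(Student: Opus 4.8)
The plan is to treat $\mathscr{A}_0$ and each $\mathscr{B}_\lambda$ as the explicit (cocycle) abelian extensions recorded in Section~2 and to extract their quasitriangular structures directly from the $R$-matrix axioms: an $R\in H\otimes H$ is a quasitriangular structure precisely when it is invertible, $(\Delta\otimes\id)(R)=R_{13}R_{23}$, $(\id\otimes\Delta)(R)=R_{13}R_{12}$, and $\Delta^{\mathrm{op}}(h)=R\,\Delta(h)\,R^{-1}$ for all $h\in H$. Writing a general $R$ in the structure basis and imposing these relations turns the problem into a finite system of scalar equations; the real content is to show that its solution set is exactly the one displayed, and that every listed datum does define a genuine $R$-matrix.

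For part~(i) I would record the $R$-matrix of $\mathscr{A}_0$ through the associated braiding form $\langle\cdot,\cdot\rangle\colon\mathscr{A}_0\otimes\mathscr{A}_0\to\Bbbk$, working in the basis $\{e_hg^i\}$ where the $e_h$ are the idempotents indexed by $\mathbb{Z}_p\rtimes\mathbb{Z}_q$ and $g$ is the group-like of order $q$ whose coproduct and action are given in Section~2. The two multiplicativity axioms translate into the statement that $\langle\cdot,\cdot\rangle$ is multiplicative (a bicharacter) in each variable; evaluating these on the orthogonal idempotents $e_h$ forces the form to be supported only on the pairs prescribed by $\Delta(g)$, which is exactly the origin of the Kronecker factors $\delta_{h,b^j}\delta_{k,b^{-i}}$. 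The quasi-cocommutativity axiom then pins the remaining free scalar down to a single $\lambda\in\Bbbk$, and the fact that $g^q=1$ forces $\lambda^q=1$; conversely each such $\lambda$ is checked directly to satisfy all three axioms.

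For part~(ii) the key structural observation is that each $\mathscr{B}_\lambda$ contains the commutative Hopf subalgebra $\Bbbk^{\mathbb{Z}_q\times\mathbb{Z}_q}$ spanned by the idempotents $e_{a^ib^j}$, and that the two legs of any $R$ must lie inside it. Granting this, $R=\sum w(a^ib^j,a^kb^l)\,e_{a^ib^j}\otimes e_{a^kb^l}$, and the axioms $(\Delta\otimes\id)(R)=R_{13}R_{23}$ and $(\id\otimes\Delta)(R)=R_{13}R_{12}$ become precisely the condition that $w$ be a bicharacter on $\mathbb{Z}_q\times\mathbb{Z}_q$. The remaining axiom $\Delta^{\mathrm{op}}(h)=R\,\Delta(h)\,R^{-1}$ need only be tested on the group-like $g$, whose action on $\langle a,b\rangle$ is through the exponent $m$ (and $m^{\lambda}$) and whose coproduct is twisted by the cocycle $\eta$; unwinding this single relation yields exactly the four compatibility conditions
\begin{align*}
w(a,a)=w(a^m,a^m),\quad w(a,b)=w(a^m,b^{m^{\lambda}})\eta(a^m,b^{m^{\lambda}},g),\\
w(b,a)=w(b^{m^{\lambda}},a^m)\eta(b^{m^{\lambda}},a^m,g),\quad w(b,b)=w(b^{m^{\lambda}},b^{m^{\lambda}}),
\end{align*}
and conversely any bicharacter satisfying them produces an $R$-matrix.

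The main obstacle in both parts is the quasi-cocommutativity relation for the non-cocommutative generators: one must carefully track how the $\mathbb{Z}_q$-action (via $m$ and $m^{\lambda}$) and the cocycle $\eta$ interact with the bicharacter, and confirm that they are captured by the displayed conditions and by nothing more. A secondary difficulty is completeness --- excluding $R$-matrices with legs outside the commutative subalgebra $\Bbbk^{\mathbb{Z}_q\times\mathbb{Z}_q}$ --- which I expect to settle using the explicit description of the group-likes of $\mathscr{B}_\lambda$ from Section~2 together with the constraint, coming from Theorem~\ref{thm1.1}, that fixes the isomorphism type of $H$.
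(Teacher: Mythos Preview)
Your outline follows the paper's overall strategy, but there is a real gap in part~(i) and a mis-aimed citation in part~(ii).

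For part~(i), your claim that ``evaluating the multiplicativity axioms on the idempotents $e_h$ forces the form to be supported on pairs prescribed by $\Delta(g)$'' does not hold: since $g$ is group-like, $\Delta(g)=g\otimes g$ carries no such information, and multiplicativity alone does not kill $\langle e_h,e_k\rangle$. The paper's argument needs an extra ingredient you have not identified: one must first prove (their Lemma~3.5) that the restriction of the braiding to $\Bbbk^G\subset\mathscr{A}_0$ is \emph{trivial}. This is a genuine computation using that the only nontrivial abelian normal subgroup of $G=\mathbb{Z}_p\rtimes\mathbb{Z}_q$ is $\langle a\rangle$, and that the bicharacter condition $w(a,a)=w(a^t,a^t)$ together with $t^2\not\equiv 1\pmod p$ forces $w\equiv 1$. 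Only with this in hand does multiplicativity give $\langle e_h,g\rangle=\delta_{h,g_0}$ for some $g_0\in G$; the specific value $g_0=b$ (and $g_1=b^{-1}$) then comes not from multiplicativity but from the quasi-cocommutativity relation, which yields $h\triangleleft g=g_0hg_0^{-1}$ and hence singles out $b$. So your attribution of the Kronecker factors to multiplicativity and of the scalar $\lambda$ to quasi-cocommutativity has the roles partly reversed, and the triviality-on-$\Bbbk^G$ step is missing entirely.

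For part~(ii) your analysis of the bicharacter conditions once $R\in\Bbbk^{\mathbb{Z}_q\times\mathbb{Z}_q}\otimes\Bbbk^{\mathbb{Z}_q\times\mathbb{Z}_q}$ is correct and matches the paper. The completeness step, however, is not settled by Theorem~1.1 (which only fixes the isomorphism class of $H$, already known). What the paper actually uses is a general lemma (their Lemma~3.9): if $G(H)$ is abelian and every proper Hopf subalgebra of $H$ is trivial, then either $H_R\subseteq\Bbbk G(H)$ or $H_R=H$, and in the latter case $H^{\ast\mathrm{cop}}\cong H$. One then rules out $H_R=H$ by comparing group-likes: $|G(\mathscr{B}_\lambda)|=q^2$ while $|G(\mathscr{B}_\lambda^\ast)|\in\{p,pq\}$, so $\mathscr{B}_\lambda^{\ast\mathrm{cop}}\not\cong\mathscr{B}_\lambda$. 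This is the argument you should supply in place of the appeal to Theorem~1.1.
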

Note that $\mathscr{A}_0$ is self-dual as a Hopf algebra, a braiding structure on $\mathscr{A}_0$ corresponds to a  quasitriangular structure on $\mathscr{A}_0$ and vise versa.

The paper is organized as follows. Section 2 is devoted to give some notation and preliminary results. In Section 3, we will prove that all non-simple quasitriangular Hopf algebras of dimension $pq^2$ are semisimple. Combining with Natale's (\cite{Na2}) result,
we get an appropriate range of these Hopf algebras now. The last section concentrates on the research of possible quasitriangular structures on them.

\begin{convention}\emph{Throughout the paper we work over an algebraically closed field $\Bbbk$ of characteristic 0. All Hopf algebras in this paper are finite dimensional. For the symbol $\delta$, we mean the classical Kronecker's symbol.  Our references for the theory of Hopf algebras are \cite{MS,R}. For a Hopf algebra $H$, the antipode of $H$ will denoted by $S$. We shall also use the
notation $S_H$ when special emphasis is needed. For a Hopf algebra $H$, the group of group-like elements in $H$ will be denoted by $G(H)$. A Hopf algebra $H$ is called simple if it contains no proper normal Hopf subalgebras in the sense of \cite[3.4.1]{MS}; $H$ is called semisimple if
it is semisimple as an algebra, and cosemisimple if it is cosemisimple as a coalgebra.}
\end{convention}

\section{Preliminaries}
We collect some necessary notions and results in this section.
\subsection{Quasitriangular Hopf algebra.} Recall that a quasitriangular Hopf algebra is a pair $(H, R)$ where $H$ is a Hopf algebra over $\Bbbk$ and $R=\sum R^{(1)} \otimes R^{(2)}$ is an invertible element in $H\otimes H$ such that
\begin{equation*}
 (\Delta \otimes \id)(R)=R_{13}R_{23},\; (\id \otimes \Delta)(R)=R_{13}R_{12},\;\Delta^{op}(h)R=R\Delta(h),
 \end{equation*}
for $h\in H$. Here by definition $R_{12}= \sum R^{(1)} \otimes R^{(2)}\otimes 1 $ and similarly for $R_{13}$ and $R_{23}$. The element $R$ is called a universal $\mathcal{R}$-matrix of $H$ or a \emph{quasitriangular structure} on $H$. Dually, the definition of coquasitriangular Hopf algebra can be given as follows. A coquasitriangular Hopf algebra is a pair $(H,\langle,\rangle)$ where $H$ is a Hopf algebra over $\Bbbk$ and $\langle,\rangle:H\otimes H\rightarrow \Bbbk$ is invertible in the convolution algebra $\Hom_\Bbbk(H\otimes H,\Bbbk)$ satisfying
$$\langle ab,c\rangle=\langle a,c_{(1)}\rangle \langle b,c_{(2)}\rangle,\;
\langle a,bc\rangle=\langle a_{(1)},c\rangle \langle c_{(2)},b\rangle,$$
$$\langle a_{(1)},b_{(1)}\rangle a_{(2)}b_{(2)}=\langle a_{(2)},b_{(2)}\rangle b_{(1)}a_{(1)},$$
for $a,b,c\in H$. The linear map $\langle,\rangle$ is called a \emph{braiding structure} on $H$.

For a quasitriangular Hopf algebra $(H,R)$, there are Hopf algebra maps $l_R: H^{\ast cop}\rightarrow H$ and $r_R: H^{\ast op}\rightarrow H$, given respectively by
$$l_R(f):=(f\otimes \Id)(R),\;\;r_R(f):=(\Id \otimes f)(R),\;f\in H^\ast.$$
Let $H_l$ and $H_r$ denote, respectively, the images of $l_R$ and of $r_R$. Then $H_l$ and $H_r$ are Hopf subalgebras of $H$ of dimension $n>1$, unless $H$ is cocommutative and $R=1\otimes 1$. By \cite[Proposition 2]{R2}, we have $H_l^{\ast cop}\cong H_r$. Since we will discuss the semisimplicity of Hopf algebras, we recall the following theorem which will be used tacitly in the remaining discussion, and the proof of it can be seen in \cite{LR1,LR2}.

\begin{lemma}\label{thm2.1}
The following statements on a finite-dimensional Hopf algebra $H$ over an
algebraically closed field of characteristic zero with antipode $S$ are equivalent:
\begin{itemize}
  \item[(i)] $H$ is semisimple;
  \item[(ii)]  $H$ is cosemisimple;
  \item[(iii)] $\emph{Tr}(S^2)\neq 0$;
  \item[(iv)] $S^2=\Id_H$.
\end{itemize}
\end{lemma}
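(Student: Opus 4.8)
The plan is to funnel all four conditions through a single trace identity, so that the statement reduces to Maschke's theorem, Radford's formula for $S^{4}$, and one genuinely characteristic-zero input. Fix a nonzero left integral $\Lambda\in H$ and a nonzero right integral $\lambda\in H^{*}$, normalized by $\lambda(\Lambda)=1$ (possible because the integral pairing is nondegenerate). The engine of the argument is the Larson--Radford trace formula
\[
\operatorname{Tr}(S^{2})=\varepsilon(\Lambda)\,\lambda(1),
\]
which I would obtain by specializing Radford's general formula $\operatorname{Tr}(f)=\lambda\big(S(\Lambda_{(1)})\,f(\Lambda_{(2)})\big)$ to $f=S^{2}$ and simplifying via the integral and antipode axioms. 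Maschke's theorem, applied to $H$ and to $H^{*}$, says precisely that $H$ is semisimple iff $\varepsilon(\Lambda)\neq 0$ and $H$ is cosemisimple iff $\lambda(1)\neq 0$. Hence the formula gives at once
\[
\operatorname{Tr}(S^{2})\neq 0\iff \varepsilon(\Lambda)\neq 0\ \text{and}\ \lambda(1)\neq 0,
\]
that is, (iii) is equivalent to the \emph{conjunction} of (i) and (ii).

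The implication (iv)$\Rightarrow$(iii) is immediate, since $\operatorname{Tr}(\Id_{H})=\dim H\neq 0$ as $\operatorname{char}\Bbbk=0$. For the return trip, assume both (i) and (ii). Semisimplicity makes $H$ unimodular, so the modular element $\alpha\in H^{*}$ is trivial; cosemisimplicity makes $H^{*}$ unimodular, so the distinguished grouplike $g\in H$ equals $1$. Radford's formula $S^{4}(h)=g\,(\alpha\rightharpoonup h\leftharpoonup\alpha^{-1})\,g^{-1}$ then collapses to $S^{4}=\Id_{H}$, so $S^{2}$ is diagonalizable with eigenvalues in $\{1,-1\}$. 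The substantive Larson--Radford step here is to upgrade $S^{4}=\Id_{H}$ to $S^{2}=\Id_{H}$: using that $H$ is both semisimple and cosemisimple, one proves by a trace/eigenvalue computation that $\operatorname{Tr}(S^{2})=\dim H$, equivalently that the $(-1)$-eigenspace is trivial. This establishes [(i) and (ii)]$\Rightarrow$(iv), and together with the previous paragraph yields the block equivalence [(i) and (ii)]$\iff$(iii)$\iff$(iv).

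The main obstacle — and the only place where $\operatorname{char}\Bbbk=0$ is indispensable — is upgrading this block equivalence to the claimed \emph{four-way} equivalence, i.e. showing that (i) and (ii) are each equivalent to their conjunction, which amounts to \emph{$H$ semisimple $\Leftrightarrow$ $H$ cosemisimple}. By the trace formula this reduces to proving that $H$ semisimple already forces $\operatorname{Tr}(S^{2})\neq 0$ (the cosemisimple direction being dual). This is the deep Larson--Radford theorem, and it genuinely fails in positive characteristic. I would follow their arithmetic strategy: spread $H$ out over a finitely generated $\mathbb{Z}$-subalgebra of $\Bbbk$ carrying the structure constants, an integral, and a suitable basis, and reduce modulo primes $\mathfrak p$, so as to compare the characteristic-zero number $\operatorname{Tr}(S^{2})$ with its positive-characteristic reductions. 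The leverage is that $S$ has finite order here (since semisimplicity makes $H$ unimodular, Radford's formula renders $S^{4}$ an inner automorphism by the finite-order grouplike $g$), so $\operatorname{Tr}(S^{2})$ is a fixed algebraic integer of absolute value at most $\dim H$; the arithmetic of its reductions then forces $\operatorname{Tr}(S^{2})=\dim H\neq 0$, whence $\lambda(1)\neq 0$ and $H$ is cosemisimple.

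Finally I would assemble the pieces. Having proved (i)$\iff$(ii) in characteristic zero, each of (i), (ii) implies their conjunction, hence (iii) and (by the previous refinement) (iv); conversely (iii)$\Rightarrow$[(i) and (ii)] and (iv)$\Rightarrow$(iii), so every one of the four conditions implies the remaining three. The delicate points to get right are the nondegeneracy normalization $\lambda(\Lambda)=1$, the two unimodularity reductions that feed Radford's $S^{4}$ formula, and above all the reduction-modulo-$\mathfrak p$ control of $\operatorname{Tr}(S^{2})$; this last arithmetic step is the real heart of the matter, everything else being bookkeeping with integrals, Maschke's theorem, and the $S^{4}$ formula.
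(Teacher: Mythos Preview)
Your sketch is essentially correct and follows the Larson--Radford strategy, but you should be aware that the paper does not prove this lemma at all: it merely recalls the statement and refers the reader to \cite{LR1,LR2} for the proof. So there is nothing in the paper to compare your argument against beyond the citation itself, and what you have outlined is precisely (a compressed version of) the content of those references. In that sense your proposal and the paper's ``proof'' agree by construction.

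One small remark on the sketch: the step where you pass from $S^{4}=\Id$ to $S^{2}=\Id$ under the hypothesis that $H$ is both semisimple and cosemisimple is not a pure eigenvalue count as you suggest; the Larson--Radford argument there uses the decomposition of $H$ into matrix coalgebras and the way $S^{2}$ acts on them, together with the Cauchy--Schwarz-type inequality for traces. Your phrasing ``a trace/eigenvalue computation that $\operatorname{Tr}(S^{2})=\dim H$'' is the right target but undersells the work involved. Similarly, the reduction-mod-$\mathfrak{p}$ step you identify as the heart of the matter is indeed the substantive content of \cite{LR1}, and your description of it is accurate in outline though necessarily schematic.
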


Let $H_R\subseteq H$ be the minimal quasitriangular Hopf subalgebra of $H$ corresponding to $R$ (see \cite{R2}). It is proved in \cite[Theorem 2]{R2} that $H_R$ is a quotient of $D(H_l)$. In particular, if $H_l$ is semisimple, then $H_r$ is semisimple, and therefore $H_R$ is semisimple. The following result is just Theorem 1.3.5  in \cite{Ge1}.

\begin{lemma}\label{thm2.2}
Let $(H, R)$ be a quasitriangular Hopf algebra over $\Bbbk$. If $H_R$ is semisimple, then $S_H^4 =\Id$.
\end{lemma}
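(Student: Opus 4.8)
The plan is to realize $S_H^4$ as conjugation by a grouplike element that happens to live inside $H_R$, and then to use the semisimplicity of $H_R$, together with its description as a quotient of a Drinfeld double, to show that this grouplike is trivial.

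First I would bring in Drinfeld's element $u=\sum S(R^{(2)})R^{(1)}$. Since $R\in H_R\otimes H_R$ by construction of the minimal quasitriangular Hopf subalgebra, both $u$ and $S(u)$ lie in $H_R$. Drinfeld's theorem gives $S^2(h)=uhu^{-1}$ for all $h\in H$, together with the fact that $c:=uS(u)=S(u)u$ is central in $H$. Writing $g_0:=uS(u)^{-1}$, which is a grouplike element of $H$, the relation $S(u)=u^{-1}c$ yields $g_0=c^{-1}u^2$, so that $u^2=cg_0$ and hence $S_H^4(h)=u^2hu^{-2}=g_0hg_0^{-1}$ for every $h\in H$. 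Thus proving $S_H^4=\Id$ reduces to proving $g_0=1$ (it would suffice to show $g_0$ is central, but since $g_0\in H_R$ I will aim for $g_0=1$).

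Next I would locate $g_0$ inside the double. Because $H_R$ is semisimple, Lemma \ref{thm2.1} gives $S^2|_{H_R}=\Id$, so the Hopf subalgebra $H_l\subseteq H_R$ also satisfies $S^2|_{H_l}=\Id$ and is therefore semisimple; its dual is then semisimple as well. By \cite[Theorem 2]{R2}, $H_R$ is a quotient of $D(H_l)$ via a surjection $\pi\colon D(H_l)\twoheadrightarrow H_R$ of quasitriangular Hopf algebras, which carries the canonical $\mathcal{R}$-matrix of $D(H_l)$ to $R$. Since the Drinfeld element and the grouplike $g_0$ are manufactured from the $\mathcal{R}$-matrix, the antipode and the multiplication alone, this forces $g_0=\pi\big(g_0^{D(H_l)}\big)$, where $g_0^{D(H_l)}=u_0S(u_0)^{-1}$ is the analogous grouplike attached to $D(H_l)$.

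The crux is then to show $g_0^{D(H_l)}=1$. Here I would invoke the explicit description of the antipode of a Drinfeld double: for any finite-dimensional Hopf algebra $A$, the grouplike element implementing $S^4_{D(A)}$ is determined by the distinguished grouplike elements (the moduli) of $A$ and of $A^{\ast}$. Taking $A=H_l$, both $H_l$ and $H_l^{\ast}$ are semisimple, hence unimodular with trivial modular functions, so both distinguished grouplikes are trivial and $g_0^{D(H_l)}=1$. Applying $\pi$ gives $g_0=1$, whence $S_H^4=\Id$. I expect the main obstacle to be precisely this last step, namely identifying $g_0^{D(H_l)}$ with the modular data of $H_l$ and checking that semisimplicity forces it to vanish; the remainder is formal manipulation of the Drinfeld element.
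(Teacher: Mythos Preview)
Your proposal is correct. The paper does not actually supply a proof of this lemma; it simply records it as Theorem~1.3.5 of \cite{Ge1} and moves on. What you have sketched is essentially Gelaki's original argument: locate the grouplike $g_0=uS(u)^{-1}$ implementing $S_H^4$ inside $H_R$, lift it through the surjection $D(H_l)\twoheadrightarrow H_R$ of quasitriangular Hopf algebras, and then kill the lift by observing that for a semisimple (hence unimodular and counimodular) $H_l$ the distinguished grouplikes of $H_l$ and $H_l^{\ast}$ are trivial, forcing $g_0^{D(H_l)}=1$. The step you flag as the main obstacle---the explicit identification of $u_0S(u_0)^{-1}$ in $D(A)$ with the modular data of $A$---is exactly the technical input Gelaki uses (a computation going back to Radford); once that is in hand the rest is, as you say, formal.
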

Combining above results, we get the following observation.
\begin{lemma}\label{lem2.3}
Let $(H, R)$ be an odd-dimensional quasitriangular Hopf algebra over $\Bbbk$. Then the following statements are equivalent:
\begin{itemize}
  \item[(i)] $H$ is semisimple;
  \item[(ii)]  $H_R$ is semisimple;
  \item[(iii)] $H_l$ is semisimple;
  \item[(iv)] $H_r$ is semisimple.
\end{itemize}
\end{lemma}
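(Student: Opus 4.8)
The plan is to establish the cycle (i) $\Rightarrow$ (iii) $\Rightarrow$ (ii) $\Rightarrow$ (i) together with the equivalence (iii) $\Leftrightarrow$ (iv), exploiting that $H_l$ and $H_r$ are Hopf subalgebras sitting inside $H_R\subseteq H$. Throughout I would use Lemma \ref{thm2.1} to move freely between semisimplicity, cosemisimplicity, and the conditions $S^2=\Id$ and $\operatorname{Tr}(S^2)\neq 0$.

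The routine implications come first. For (i) $\Rightarrow$ (iii): since $H_l$ is a Hopf subalgebra, its antipode is the restriction $S_H|_{H_l}$; if $H$ is semisimple then $S_H^2=\Id$ by Lemma \ref{thm2.1}, hence $S_{H_l}^2=\Id$ and $H_l$ is semisimple, again by Lemma \ref{thm2.1}. The implication (iii) $\Rightarrow$ (ii) is essentially recorded in the paragraph preceding the statement: $H_R$ is a quotient of $D(H_l)$, and $D(H_l)$ is semisimple whenever $H_l$ is (using that over $\Bbbk$ semisimplicity and cosemisimplicity coincide, so $H_l$ is both, and that a quotient of a semisimple algebra is semisimple), so $H_l$ semisimple forces $H_R$ semisimple. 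Finally (iii) $\Leftrightarrow$ (iv) follows from the isomorphism $H_l^{\ast cop}\cong H_r$: passing to $\mathrm{cop}$ leaves the underlying algebra unchanged, so $H_r$ is semisimple iff $H_l^\ast$ is, iff $H_l$ is cosemisimple, iff (by Lemma \ref{thm2.1}) $H_l$ is semisimple.

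The crux, and the only place the odd-dimensionality enters, is (ii) $\Rightarrow$ (i). Assuming $H_R$ semisimple, Lemma \ref{thm2.2} gives $S_H^4=\Id$, so $S_H^2$ satisfies $(S_H^2)^2=\Id$; since $\operatorname{char}\Bbbk=0$ the polynomial $x^2-1$ has distinct roots, whence $S_H^2$ is diagonalizable with every eigenvalue in $\{\pm 1\}$. Writing $a$ and $b$ for the dimensions of the $(+1)$- and $(-1)$-eigenspaces, we have $a+b=\dim H$ and $\operatorname{Tr}(S_H^2)=a-b$. Because $\dim H$ is odd, $a-b\equiv a+b\pmod 2$ is odd, in particular nonzero, so $\operatorname{Tr}(S_H^2)\neq 0$ and $H$ is semisimple by Lemma \ref{thm2.1}. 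This parity argument is the heart of the proof: without oddness of $\dim H$ one could only conclude that $S_H^2$ is an involution, which does not by itself force $\operatorname{Tr}(S_H^2)\neq 0$.

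Assembling these, the chain (i) $\Rightarrow$ (iii) $\Rightarrow$ (ii) $\Rightarrow$ (i) shows (i)--(iii) are equivalent, and (iii) $\Leftrightarrow$ (iv) then yields the full equivalence. I expect no genuine obstacle beyond correctly matching the hypothesis of Lemma \ref{thm2.2} to condition (ii) and carrying out the eigenvalue--trace count; the main subtlety is simply recognizing that the parity of $\dim H$ upgrades $S_H^4=\Id$ to $\operatorname{Tr}(S_H^2)\neq 0$.
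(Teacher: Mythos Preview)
Your argument is correct and follows essentially the same route as the paper: both use that Hopf subalgebras inherit semisimplicity, that $H_R$ is a semisimple quotient of $D(H_l)$ when $H_l$ is semisimple, the duality $H_l^{\ast cop}\cong H_r$ together with Lemma~\ref{thm2.1} for (iii) $\Leftrightarrow$ (iv), and Lemma~\ref{thm2.2} plus the parity trick $\operatorname{Tr}(S_H^2)\equiv \dim H\pmod 2$ to close the cycle. The only cosmetic difference is that the paper routes the implications as (i) $\Rightarrow$ (ii) $\Rightarrow$ (iii) $\Leftrightarrow$ (iv) $\Rightarrow$ (i), whereas you run (i) $\Rightarrow$ (iii) $\Rightarrow$ (ii) $\Rightarrow$ (i); the substance is identical.
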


\begin{proof}
Since a Hopf subalgebra of a semisimple Hopf algebra is semisimple, we know (i) $\Rightarrow$ (ii) and (ii) $\Rightarrow$ (iii). By Lemma \ref{thm2.1}, we have (iii) $\Leftrightarrow$ (iv). If $H_r$ is semisimple, then $H_R$ is semisimple. Thus one can use Lemma \ref{thm2.2} to get $S_H^4=\Id$. By assumption, the dimension of $H$ is odd, therefore $\text{Tr}(S_H^2)\neq 0$. Thus, by Lemma \ref{thm2.1}, $H$ is semisimple.
\end{proof}
\subsection{Hopf exact sequence.}

\begin{definition}\label{def2.1.1}
A short exact sequence of Hopf algebras is a sequence of Hopf algebras
and Hopf algebra maps
\begin{equation}\label{ext}
\;\; K\xrightarrow{\iota} H \xrightarrow{\pi} \overline{H}
\end{equation}
such that
\begin{itemize}
  \item[(i)] $\iota$ is injective,
  \item[(ii)]  $\pi$ is surjective,
  \item[(iii)] $\ker(\pi)= HK^+$, $K^+$ is the kernel of the counit of $K$.
\end{itemize}
\end{definition}

Take an exact sequence \eqref{ext}, then $K$ is a normal Hopf
subalgebra of $H$. Conversely, if $K$ is a normal Hopf subalgebra of a Hopf algebra
$H$, then the quotient coalgebra $H=H/HK^+=H/K^+H$ is a quotient Hopf algebra
and $H$ fits into an extension \eqref{ext}, where $\iota$ and $\pi$ are the canonical maps. The following lemma is shown in \cite[Lemma 3.2]{Na1}.
\begin{lemma}\label{lem2.4}
Let the sequence \eqref{ext} be a short exact sequence of finite-dimensional Hopf algebras. Let also $L\subseteq H$ be a Hopf subalgebra. If $L$ is simple, then either $L\subseteq \iota (K)$ or
$L\cap \iota(K)=\Bbbk 1$. In the last case, the restriction $\pi|_L$ is injective.
\end{lemma}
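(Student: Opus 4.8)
The plan is to play the normality of $\iota(K)$ in $H$ against the simplicity of $L$. Recall that, as observed right after Definition~\ref{def2.1.1}, exactness of \eqref{ext} makes $\iota(K)$ a normal Hopf subalgebra of $H$; that is, $\iota(K)$ is stable under both the left and the right adjoint actions of $H$.

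First I would check that $N:=L\cap\iota(K)$ is a normal Hopf subalgebra of $L$. Being an intersection of two Hopf subalgebras of $H$, it is certainly a Hopf subalgebra. For normality, fix $\ell\in L$ and $x\in N$ and look at $\sum \ell_{(1)}\,x\,S(\ell_{(2)})$: since $L$ is a Hopf subalgebra we have $\ell_{(1)},S(\ell_{(2)}),x\in L$, so this element lies in $L$; and since $\ell\in H$, $x\in\iota(K)$ and $\iota(K)$ is normal in $H$, it also lies in $\iota(K)$. Thus it lies in $N$, and the symmetric argument for the right adjoint action $\sum S(\ell_{(1)})\,x\,\ell_{(2)}$ shows $N$ is normal in $L$. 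Because $L$ is simple, $N$ has no choice but to be $L$ or $\Bbbk 1$. If $N=L$, then $L=L\cap\iota(K)\subseteq\iota(K)$, giving the first alternative; so from now on assume $N=\Bbbk 1$, and it remains to prove that $\pi|_L$ is injective.

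The decisive point is that $N$ is exactly the Hopf kernel (space of coinvariants) of $\pi|_L$. For the finite-dimensional exact sequence \eqref{ext} one has $\iota(K)=H^{\mathrm{co}\,\pi}=\{h\in H:(\id\otimes\pi)\D(h)=h\otimes 1\}$, and intersecting with $L$ yields $\{\ell\in L:(\id\otimes\pi)\D(\ell)=\ell\otimes 1\}=N$, i.e. the coinvariants of $\pi|_L\colon L\to\pi(L)$ are precisely $N$. By the theory of exact sequences of finite-dimensional Hopf algebras (faithful flatness, via Nichols--Zoeller), this produces an exact sequence $N\to L\to\pi(L)$ and hence $\dim L=\dim N\cdot\dim\pi(L)$. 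With $N=\Bbbk 1$ this forces $\dim L=\dim\pi(L)$, so the surjection $\pi|_L\colon L\twoheadrightarrow\pi(L)$ is a bijection; in particular $\pi|_L$ is injective.

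I expect the only real obstacle to lie in this last step: one must correctly identify $L\cap\iota(K)$ with the coinvariants of $\pi|_L$ and then invoke the appropriate freeness/faithful-flatness result to upgrade the triviality of the coinvariants to honest injectivity, rather than merely to triviality of some associated quotient. The remaining ingredients---that $N$ is a normal Hopf subalgebra of $L$, and the dichotomy coming from the simplicity of $L$---are formal consequences of the definitions.
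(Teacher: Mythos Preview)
Your argument is correct. Note, however, that the paper does not actually supply its own proof of this lemma: it simply records that the statement is \cite[Lemma~3.2]{Na1} and moves on. So there is nothing in the paper to compare against, and what you have written is a self-contained proof filling in that citation.

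The route you take is the expected one and matches what one finds in Natale's paper: intersect $L$ with the normal Hopf subalgebra $\iota(K)$, observe that $N=L\cap\iota(K)$ is normal in $L$, and invoke simplicity to force $N=L$ or $N=\Bbbk 1$. For the injectivity of $\pi|_L$ in the second case, your identification $N=L^{\mathrm{co}\,\pi|_L}$ is correct (since $\iota(K)=H^{\mathrm{co}\,\pi}$ for a finite-dimensional exact sequence), and the dimension count $\dim L=\dim N\cdot\dim\pi(L)$ follows, as you indicate, from Nichols--Zoeller applied on the dual side: $B^*:=\pi(L)^*$ embeds as a Hopf subalgebra of $L^*$, so $L^*$ is free over $B^*$, hence $L$ is cofree as a $B$-comodule, and counting coinvariants gives exactly $\dim N=\dim L/\dim B$. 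With $N=\Bbbk 1$ this forces $\pi|_L$ to be bijective onto its image. Your own caveat about this step is well placed, but the finite-dimensionality hypothesis makes it go through without difficulty.
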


The following two results are \cite[Theorem 6]{PS1} and \cite[Theorem 1.1]{Na1} respectively which will play crucial role in the proof of Proposition \ref{pro3.1}.

\begin{lemma}\label{thm2.4}
Let $H$ be a semisimple Hopf algebra of dimension $pq$ over $\Bbbk$,
where $p$ and $q$ are distinct prime numbers. Then $H$ is trivial.
\end{lemma}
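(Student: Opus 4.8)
The statement is a theorem of Etingof and Gelaki, and here is how I would organize a proof. Throughout, $H$ is semisimple of dimension $pq$ over $\Bbbk$, so by Lemma \ref{thm2.1} it is also cosemisimple with $S^2=\Id$, and the same holds for $H^\ast$; recall that \emph{trivial} means $H\cong \Bbbk[G]$ or $H\cong \Bbbk^{G}$ for a group $G$ of order $pq$. The plan is to first prove that $H$ is \emph{not} simple as a Hopf algebra, and then to identify $H$ by classifying the resulting abelian extension. The first step is the main obstacle: a general semisimple Hopf algebra may well satisfy $G(H)=G(H^\ast)=1$ and contain no proper normal Hopf subalgebra, so the existence of such a subalgebra is genuinely special to the dimension $pq$. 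By the Nichols--Zoeller freeness theorem one has $\dim K \mid \dim H$ for every Hopf subalgebra $K$; applied to $\Bbbk[G(H)]\subseteq H$ and to $\Bbbk[G(H^\ast)]\subseteq H^\ast$ this forces $|G(H)|,|G(H^\ast)|\in\{1,p,q,pq\}$. If either order is $pq$ then $H$ or $H^\ast$ is a group algebra and we are finished, so assume both are proper. To exclude $G(H)=G(H^\ast)=1$ I would invoke the Kac--Zhu class equation, which writes $pq=\dim H$ as a sum of positive integers each dividing $\dim H$, the contribution of the trivial class being $|G(H^\ast)|$; a divisibility analysis of this identity modulo $p$ and modulo $q$ then yields $G(H^\ast)\neq 1$ (and dually $G(H)\neq 1$).

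Say $|G(H^\ast)|=p$. Then $\Bbbk[G(H^\ast)]\subseteq H^\ast$ dualizes to a surjection $\pi\colon H\to \Bbbk^{G(H^\ast)}$ onto a commutative Hopf algebra of dimension $p$, and the coinvariant subalgebra $K:={}^{\,\mathrm{co}\,\pi}H$ is a normal Hopf subalgebra with $\dim K=pq/p=q$. Since $q$ is prime, Zhu's classification of Hopf algebras of prime dimension (the same result quoted in the introduction) gives $K\cong\Bbbk[\mathbb{Z}_q]$, and by Definition \ref{def2.1.1} we obtain a short exact sequence
$$\Bbbk[\mathbb{Z}_q]\xrightarrow{\ \iota\ } H \xrightarrow{\ \pi\ } \Bbbk^{\mathbb{Z}_p}$$
whose kernel and cokernel are (co)commutative; this is an \emph{abelian} extension. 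If instead the prime order realized is $q$, or if one works with $G(H)$ rather than $G(H^\ast)$, one lands in the dual situation, which is handled by the identical argument applied to $H^\ast$.

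It remains to classify such abelian extensions. Extensions of $\Bbbk^{\mathbb{Z}_p}$ by $\Bbbk[\mathbb{Z}_q]$ are governed by a matched pair of the groups $\mathbb{Z}_p,\mathbb{Z}_q$ together with a class in the associated Kac cohomology group $\mathrm{Opext}$. Because $\gcd(p,q)=1$ and both groups are cyclic, every action occurring in the matched pair is either trivial or produces the nonabelian group $\mathbb{Z}_q\rtimes\mathbb{Z}_p$ (the latter only when the arithmetic condition $q\mid p-1$ holds), so the underlying group-theoretic datum is always an honest group $G$ of order $pq$. Moreover the relevant Kac cohomology vanishes for cyclic groups of coprime order, so no nontrivial cocycle can deform the extension away from the bismash product. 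Consequently $H\cong\Bbbk[G]$ or $H\cong\Bbbk^{G}$, that is, $H$ is trivial, as claimed. The cohomological vanishing in this last step is routine; by contrast, the class-equation argument establishing non-simplicity in the first paragraph is where the real work lies.
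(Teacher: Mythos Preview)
The paper does not prove this lemma at all; it is quoted verbatim as \cite[Theorem 6]{PS1} (Etingof--Gelaki), so there is no in-paper argument to compare your proposal against.

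That said, your outline is essentially the strategy of the cited Etingof--Gelaki proof: (i) show $G(H^\ast)\neq 1$, (ii) extract a normal Hopf subalgebra of prime order and hence an abelian extension, (iii) classify the extension. Two points of precision. First, your formulation of the Kac--Zhu class equation is not quite right: the class equation writes $\dim H$ as a sum of positive divisors of $\dim H$ with at least one summand equal to $1$, not with a summand equal to $|G(H^\ast)|$; what one actually uses is that the dimensions of the simple $H$-modules divide $\dim H$ (a result of Etingof--Gelaki for quasitriangular, and separately established in this dimension range), so that $pq=1+\sum d_i^2$ with $d_i\in\{1,p,q\}$, and a short arithmetic argument forces some $d_i=1$ beyond the trivial one. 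Your own caveat that this step ``is where the real work lies'' is well placed. Second, in step (iii) the claim that the bismash product is automatically a group algebra or its dual needs the observation that for a matched pair on $(\mathbb{Z}_p,\mathbb{Z}_q)$ with $\gcd(p,q)=1$ at least one of the two actions must be trivial (there is no room for both to be nontrivial), which is what makes the resulting Hopf algebra commutative or cocommutative; the Opext vanishing then finishes it. With these adjustments your sketch is a faithful summary of the literature proof.
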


\begin{lemma}\label{thm2.5}
Let $H$ be a Hopf algebra of dimension $pq$ over $\Bbbk$ where $p$ and $q$ are odd prime numbers. Assume that $H$ admits a quasitriangular structure. Then $H$ is semisimple and isomorphic to a group
algebra $\Bbbk F$, where $F$ is a group of order $pq$.
\end{lemma}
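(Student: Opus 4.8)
The plan is to prove the statement in two stages: first establish that $H$ is semisimple, and then pin down its isomorphism type among the semisimple Hopf algebras of dimension $pq$. Since $pq$ is odd, Lemma \ref{lem2.3} reduces the entire semisimplicity question to understanding the Hopf subalgebra $H_l$ (equivalently $H_r$, as $H_r\cong H_l^{\ast cop}$): $H$ is semisimple as soon as $H_l$ is. By the Nichols--Zoeller freeness theorem $\dim H_l$ divides $pq$, so $\dim H_l\in\{1,p,q,pq\}$, and I would dispatch the first three values directly. If $\dim H_l=1$ then $R=1\otimes 1$ and $H$ is cocommutative, hence a group algebra by the Cartier--Kostant theorem and in particular semisimple. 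If $\dim H_l$ equals $p$ or $q$, then $H_l$ has prime dimension and is therefore a group algebra by Zhu's theorem \cite{ZY}; being semisimple, it forces $H$ semisimple through Lemma \ref{lem2.3}.

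This leaves the single case $\dim H_l=pq$, which I expect to be the main obstacle. Here $l_R\colon H^{\ast cop}\to H$ is a surjection between spaces of equal dimension, hence an isomorphism, so $H$ is minimal quasitriangular ($H=H_R=H_l=H_r$) and self-dual in the sense that $H\cong H^{\ast cop}$; the crude dimension bookkeeping no longer suffices and the quasitriangular self-duality must be used as genuine leverage. My approach would be to exploit the Drinfeld element $u$, which satisfies $S_H^2=\operatorname{Ad}(u)$ and hence $S_H^4=\operatorname{Ad}(u^2)$, together with the self-duality $G(H)\cong G(H^\ast)$ coming from $l_R$, whose common order is restricted to $\{1,p,q\}$ by Nichols--Zoeller whenever $H$ fails to be semisimple. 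The aim is to show that this configuration forces $u^2$ to be central, i.e. $S_H^4=\Id$; once this is in hand, the parity argument already used in the proof of Lemma \ref{lem2.3} applies verbatim, since oddness of $\dim H$ makes $\operatorname{Tr}(S_H^2)\neq 0$, and Lemma \ref{thm2.1} then yields semisimplicity. The delicate point is precisely to exclude a genuinely non-semisimple minimal quasitriangular $H$ of dimension $pq$: semisimplicity of arbitrary Hopf algebras of dimension $pq$ is open, so nothing less than the self-dual quasitriangular structure can close this case.

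Once $H$ is known to be semisimple, the identification is quick. By the Etingof--Gelaki theorem (Lemma \ref{thm2.4}) every semisimple Hopf algebra of dimension $pq$ is trivial, so either $H\cong \Bbbk F$ for a group $F$ of order $pq$, or $H\cong \Bbbk^{G}$ for such a group $G$. In the second case I would observe that a quasitriangular structure on $\Bbbk^{G}$ amounts to a braiding on $\operatorname{Rep}(\Bbbk^{G})$, the category of $G$-graded vector spaces, which exists only when $G$ is abelian; then $\Bbbk^{G}\cong\Bbbk\widehat{G}$ is again a group algebra. Hence in all cases $H$ is isomorphic to a group algebra $\Bbbk F$ with $|F|=pq$, as claimed.
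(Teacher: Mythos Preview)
The paper does not actually prove this lemma: it is quoted verbatim as \cite[Theorem~1.1]{Na1} (see the sentence preceding Lemma~\ref{thm2.4}) and used thereafter as a black box. So there is no in-paper argument for you to be compared against; what follows is an assessment of your proposal on its own merits.

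Your treatment of the cases $\dim H_l\in\{1,p,q\}$ is correct, and the final identification step is also fine: once $H$ is semisimple, Lemma~\ref{thm2.4} makes $H$ trivial, and a quasitriangular structure on $\Bbbk^G$ does force $G$ abelian (a braiding on $G$-graded vector spaces needs $\Bbbk_g\otimes\Bbbk_h$ and $\Bbbk_h\otimes\Bbbk_g$ to live in the same degree), so $\Bbbk^G\cong\Bbbk\widehat G$.

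The genuine gap is the case $\dim H_l=pq$. You correctly extract that $H$ is minimal quasitriangular with $H\cong H^{\ast\,cop}$, and you correctly name the target $S_H^4=\Id$. But the passage ``the aim is to show that this configuration forces $u^2$ to be central'' is a wish, not an argument. Nothing you have written explains \emph{why} the self-duality $G(H)\cong G(H^\ast)$ together with the bound $|G(H)|\in\{1,p,q\}$ should make $u^2$ central; knowing that $G(H)$ is small does not by itself control the inner automorphism $\operatorname{Ad}(u^2)$. You explicitly flag this as ``the delicate point'' and then leave it open. Since, as you note, semisimplicity of arbitrary $pq$-dimensional Hopf algebras is unresolved, this case really does require a substantive argument, and your outline does not supply one. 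Natale's proof in \cite{Na1} handles precisely this minimal self-dual case with additional structural work that is not captured by your sketch; absent that, the proposal is incomplete.
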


An extension \eqref{ext} above such that $K$ is commutative and $\overline{H}$ is cocommutative is called \emph{abelian}. In this situation, we know the extension \eqref{ext} can be written in the following form:
\begin{equation*}
\;\; \Bbbk^G\xrightarrow{\iota} H \xrightarrow{\pi} \Bbbk F,
\end{equation*}
where $G, F$ are finite groups. Abelian extensions were classified by Masuoka
(see \cite[Proposition 1.5]{M3}), and the above $H$ can be expressed as $\Bbbk^G\#_{\sigma,\tau}\Bbbk F$.
To give the description of $\Bbbk^G\#_{\sigma,\tau}\Bbbk F$, we need the following data
\begin{itemize}
\item[(i)] A matched pair of groups, i.e. a quadruple $(F,G,\triangleleft,\triangleright)$, where $G\stackrel{\triangleleft}{\leftarrow}G\times F \stackrel{\triangleright }{\rightarrow}F$ are action of groups on sets, satisfying the following conditions
    \begin{align*}
    g\triangleright(ff')=(g\triangleright f)((g\triangleleft f)\triangleright f'),\quad (gg')\triangleleft f=(g\triangleleft(g'\triangleright f))(g'\triangleleft f),
    \end{align*}
    for $g,g'\in G$ and $f,f'\in F$.
\item[(ii)] $\sigma:G\times F\times F\rightarrow \Bbbk^\times$ is a map such that
\begin{align*}
    \sigma(g\triangleleft f,f',f'')\sigma(g,f,f'f'')=\sigma(g ,f,f')\sigma(g,ff',f'')
    \end{align*}
    and $\sigma(1,f,f')=\sigma(g,1,f')=\sigma(g,f,1)=1$, for $g\in G$ and $f,f',f''\in F$.
\item[(iii)] $\tau:G\times G \times F \rightarrow \Bbbk^\times$ is a map satisfying
    \begin{align*}
    \tau(gg',g'',f)\tau(g,g',g''\triangleright f)=\tau(g',g'',f)\tau(g,g'g'',f)
    \end{align*}
    and $\tau(g,g',1)=\tau(g,1,f)=\tau(1,g',f)$, for $g,g',g''\in G$ and $f\in F$. Moreover, the $\sigma, \tau$ satisfy the following compatible condition
    \begin{align*}
    \sigma(gg',f,f')\tau(g,g',ff')&=\sigma(g,g'\triangleright f, (g'\triangleleft f)\triangleright f')\sigma(g',f,f')\\
    &\tau(g,g',f)\tau(g\triangleleft (g'\triangleleft f),g'\triangleleft f,f'),
    \end{align*}
    for $g,g',g''\in G$ and $f,f',f''\in F$.
\end{itemize}

\begin{definition}\cite[Section 2.2]{AA}\label{def2.1.2}
The Hopf algebra $\Bbbk^G\#_{\sigma,\tau}\Bbbk F$ is equal to $\Bbbk^G\otimes \Bbbk F$ as vector space and we write $a\otimes x$ as $a\#x$. The product, coproduct are given by
\begin{align*}
 &(e_g\#f).(e_{g'}\#f')=\delta_{g\triangleleft f,g'}\;\sigma(g,f,f')\;e_g\#(ff'),\\
 &\Delta(e_g\#f)=\sum_{g'g''=g}\;\tau(g',g'',f)\;e_{g'}\#g''\triangleright f\otimes e_{g''}\#f,
\end{align*}
The unit is $\sum_{g\in G}e_g\#1$ and the counit is $\epsilon(e_g\#f)=\delta_{g,1}$ and the antipode is
\begin{align*}
 S(e_g\#f)=\sigma(g^{-1},g\triangleright f,(g\triangleright f)^{-1})^{-1}\;
 \tau(g^{-1},g,f)^{-1}\;e_{(g\triangleleft f)^{-1}}\#(g\triangleright f)^{-1}.
\end{align*}
\end{definition}
We need the following examples of $\Bbbk^G\#_{\sigma,\tau}\Bbbk F$ for further discussion.
\begin{example}\label{ex2.1.5}
\emph{Let $p, q$ be two prime numbers such that $p\equiv 1(\text{mod}\;q)$ and let $\omega$ be a primitive $q$th root of 1 in $\Bbbk$. Assume $t\in \mathbb{N}$ satisfying $t^q\equiv 1(\text{mod}\;p)$ and $t \not \equiv 1(\text{mod}\;p)$. Let $0\leq l\leq (q-1)$, then the Hopf algebra $\mathscr{A}_l$ \cite[Lemma 1.3.9]{Na2} belongs to $\Bbbk^G\#_{\sigma,\tau}\Bbbk F$. By definition, the data $(G,F,\triangleleft,\triangleright,\sigma,\tau)$ of $\mathscr{A}_l$ is given by the following way
\begin{itemize}
             \item[(i)] $G=\mathbb{Z}_p\rtimes \mathbb{Z}_q=\langle a,b|\;a^p=b^q=1,bab^{-1}=a^t\rangle$,\;$F=\mathbb{Z}_q=\langle g|\;g^q=1\rangle$. The action $\triangleright $ is trivial, and $a\triangleleft g^{i}=a^{t^i},b\triangleleft g^i=b$, for $0\leq i \leq q-1$.
              \item[(ii)] $\sigma(a^i b^j,g^m, g^n)=w^{jlq_{mn}}$, where $q_{mn}$ is the quotient of $m+n$ in the division by $q$ and $1 \leq i\leq p-1$, $0\leq j,m,n \leq q-1$.
              \item[(iii)] $\tau(g ,g',f)=1$ for $g,g'\in G$ and $f\in F$.
\end{itemize}}
\emph{For ease of use, we define a character $\phi:G\rightarrow \Bbbk$ by $\phi(a):=1, \phi(b):=\omega$. Then the above $\sigma$ in Example \ref{ex2.1.5} can be expressed by $\sigma(a^i b^j,g^m, g^n)=\phi^{lq_{mn}}(a^ib^j)$, where $i,j,m,n,q_{mn}$ are the same as (ii).}
\end{example}

\begin{example}\label{ex2.1.6}
\emph{Let $p, q$ be two prime numbers such that $q\equiv 1(\text{mod}\;p)$ and let $\zeta$ be a primitive $q$th root of 1 in $\Bbbk$. Assume $m\in \mathbb{N}$ satisfying $m^p \equiv 1(\text{mod}\;q)$ and $m \not \equiv 1(\text{mod}\;p)$. Let $0\leq \lambda \leq (p-1)$, then the Hopf algebra $\mathscr{B}_\lambda$ \cite[Proposition 1.4.7]{Na2} belongs to $\Bbbk^G\#_{\sigma,\tau}\Bbbk F$. By definition, the data $(G,F,\triangleleft,\triangleright,\sigma,\tau)$ of $\mathscr{B}_\lambda$ is given by the following way
\begin{itemize}
             \item[(i)] $G=\mathbb{Z}_q\times \mathbb{Z}_q=\langle a,b|\;a^q=b^q=1,ab=ba\rangle$,\;$F=\mathbb{Z}_p=\langle g|\;g^p=1\rangle$. The action $\triangleright $ is trivial, and $a\triangleleft g^{-i}=a^{m^i},b\triangleleft g^{-i}=b^{m^{\lambda i}}$, for $0\leq i \leq p-1$.
              \item[(ii)] $\sigma(g ,f,f')=1$ for $g\in G$ and $f,f'\in F$.
              \item[(iii)] $\tau(a^ib^j ,a^kb^l,g^n)=\zeta_{n}^{jk}$, where $\zeta_{n}=\zeta^{c_n(m^{\lambda+1})}$, here $c_n(r):=1+r+...+r^{n-1}$ for $r\in \mathbb{Z}$ and $0\leq i,j,k,l \leq q$, $0\leq n \leq p-1$.
\end{itemize}}
\end{example}

Let $0\leq \lambda \leq (p-1)$. For convenience, we will denote the dual of a Hopf algebra $H$ as $H^\ast$. Since we want to express $\mathscr{B}_\lambda^\ast$ as the form $\Bbbk^G\#_{\sigma,\tau}\Bbbk F$, we give the following lemma.

\begin{lemma}\label{lem2.5}
Assume the data of $\Bbbk^G\#_{\sigma,\tau}\Bbbk F$ is $(G,F,\triangleleft,\triangleright,\sigma,\tau)$. If the action $\triangleright$ is trivial, i.e $g\triangleright f=f$ for $g\in G,f\in F$, then $(\Bbbk^G\#_{\sigma,\tau}\Bbbk F)^\ast \cong \Bbbk^{G'}\#_{\sigma',\tau'}\Bbbk F'$ as a Hopf algebra, where the data $(G',F',\triangleleft',{'\triangleright},\sigma',\tau')$ of $\Bbbk^{G'}\#_{\sigma',\tau'}\Bbbk F'$ is given as follows
\begin{itemize}
             \item[(i)] $G'=F,\;F'=G$. The action $\triangleleft'$ is trivial and $f('\triangleright) g=g\triangleleft f^{-1}$;
              \item[(ii)] $\sigma':G'\times F'\times F'\rightarrow \Bbbk^\times$ is defined by $\sigma'(f,g,g')=\tau(g\triangleleft f^{-1},g'\triangleleft f^{-1},f)$ for $g,g'\in G$ and $f\in F$.
              \item[(iii)] $\tau':G'\times G' \times F' \rightarrow \Bbbk^\times$ is given by $\tau'(f,f',g)=\sigma(g\triangleleft (ff')^{-1},f,f')$ for $g\in G$ and $f,f'\in F$.
\end{itemize}
\end{lemma}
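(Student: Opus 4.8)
The plan is to realize the isomorphism explicitly through the natural nondegenerate pairing between the two Hopf algebras, rather than invoking the general machinery for dualizing bicrossed products. Write $H=\Bbbk^G\#_{\sigma,\tau}\Bbbk F$ with basis $\{e_g\#f\mid g\in G,\ f\in F\}$ and $H'=\Bbbk^{G'}\#_{\sigma',\tau'}\Bbbk F'$ with basis $\{e_f\#g\mid f\in F=G',\ g\in G=F'\}$. First I would define a linear map $\Phi\colon H'\to H^\ast$ by prescribing the bilinear form
\[
\langle e_f\#g,\ e_{g'}\#f'\rangle=\delta_{f,f'}\,\delta_{g\triangleleft f^{-1},\,g'},
\]
that is, $\Phi(e_f\#g)$ is the functional dual to the basis vector $e_{g\triangleleft f^{-1}}\#f$. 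Since $(g,f)\mapsto(g\triangleleft f^{-1},f)$ is a bijection of $G\times F$, the map $\Phi$ carries the basis of $H'$ bijectively onto the dual basis of $H^\ast$, so it is a linear isomorphism, and it remains only to check that it is a bialgebra map. The reindexing by $g\mapsto g\triangleleft f^{-1}$ built into this pairing is the crucial device: it is exactly what produces the twists by $\triangleleft f^{-1}$ appearing in $\sigma'$ and in $f\,({'\triangleright})\,g$.

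Next I would verify that $\Phi$ is an algebra map, i.e.\ that the product of $H'$ from Definition \ref{def2.1.2} is the transpose of the coproduct of $H$. Here the triviality of $\triangleright$ enters decisively: since $g'\triangleright f=f$, the matched-pair identity $(gg')\triangleleft f=(g\triangleleft(g'\triangleright f))(g'\triangleleft f)$ collapses to $(gg')\triangleleft f=(g\triangleleft f)(g'\triangleleft f)$, so for each fixed $f$ the map $g\mapsto g\triangleleft f$ is a group endomorphism of $G$. Pairing $(e_f\#g)(e_{f'}\#g')$ against an arbitrary $e_{g''}\#f''$ through $\langle e_f\#g\otimes e_{f'}\#g',\ \Delta(e_{g''}\#f'')\rangle$, and using that $\triangleright$ trivial also simplifies $\Delta$ (the factor $g_2\triangleright f''$ becomes $f''$), this homomorphism property is precisely what makes the two Kronecker-delta supports agree and forces the $\delta_{f,f'}$ of the dual product. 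Comparing the surviving scalars then yields exactly $\sigma'(f,g,g')=\tau(g\triangleleft f^{-1},g'\triangleleft f^{-1},f)$.

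Dually, I would check that $\Phi$ is a coalgebra map, i.e.\ that the coproduct of $H'$ is the transpose of the product of $H$. Expanding $\langle\Delta_{H'}(e_f\#g),\ (e_{g_1'}\#f_1')\otimes(e_{g_2'}\#f_2')\rangle$ and $\langle e_f\#g,\ (e_{g_1'}\#f_1')(e_{g_2'}\#f_2')\rangle$ and matching them, the factorizations $f_1'f_2'=f$ in $F$, together with the right-action rule $(g\triangleleft a)\triangleleft b=g\triangleleft(ab)$, line up the two expressions and let one read off the action $f\,({'\triangleright})\,g=g\triangleleft f^{-1}$ and the cocycle $\tau'(f,f',g)=\sigma(g\triangleleft(ff')^{-1},f,f')$. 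Finally I would confirm that $\Phi$ sends the unit $\sum_{f}e_f\#1$ of $H'$ to the counit of $H$ and intertwines the counit of $H'$ with evaluation at the unit of $H$; both are immediate from the pairing. Since a bialgebra isomorphism between finite-dimensional Hopf algebras automatically commutes with the antipodes, $\Phi$ is then a Hopf algebra isomorphism, giving $H^\ast\cong H'$.

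The main obstacle is purely bookkeeping: fixing the correct reindexing in the pairing and then keeping the inverses and the left/right order straight inside $\sigma$ and $\tau$. Once the pairing $\langle e_f\#g,\ e_{g'}\#f'\rangle=\delta_{f,f'}\delta_{g\triangleleft f^{-1},g'}$ is in hand, every twist in the statement is forced, and triviality of $\triangleright$ is used in exactly one essential place, namely to make $g\mapsto g\triangleleft f$ multiplicative so that the transpose of the (simplified) coproduct of $H$ is again a bicrossed-product multiplication. One may also note, as an a posteriori consistency check following from $H^\ast\cong H'$, that the data $(G',F',\triangleleft',{'\triangleright},\sigma',\tau')$ automatically satisfies the matched-pair and cocycle axioms of Section $2$.
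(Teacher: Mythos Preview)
Your proposal is correct and follows essentially the same route as the paper. The paper defines the explicit isomorphism $\varphi\colon H^\ast\to H'$ by $\varphi(E_{g;f})=e_f\#(g\triangleleft f)$, which is precisely the inverse of your $\Phi$ (your pairing $\langle e_f\#g,\,e_{g'}\#f'\rangle=\delta_{f,f'}\delta_{g\triangleleft f^{-1},g'}$ says $\Phi(e_f\#g)=E_{g\triangleleft f^{-1};f}$), and then verifies the algebra and coalgebra axioms by direct computation; the only cosmetic difference is that the paper checks the coalgebra property by showing the dual map $\varphi^\ast$ is an algebra homomorphism, whereas you phrase both checks through the bilinear pairing.
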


\begin{proof}
For the Hopf algebra $\Bbbk^G\#_{\sigma,\tau}\Bbbk F$, we denote the dual basis of $\{e_g\#f|\;g\in G,f\in F\}$ by $\{E_{g;f}|\;g\in G,f\in F\}$, that is,
$E_{g;f}(e_{g'}\#f')=\delta_{g,g'}\delta_{f,f'}$ for $g,g'\in G$ and $f,f'\in F$. Define $\varphi:(\Bbbk^G\#_{\sigma,\tau}\Bbbk F)^\ast \rightarrow \Bbbk^{G'}\#_{\sigma',\tau'}\Bbbk F'$ by $\varphi(E_{g;f})=e_f\#g\triangleleft f$ for $g\in G$ and $f\in F$. Then we claim $\varphi$ is an isomorphism of Hopf algebras.

By definition, we will get $E_{g;f}E_{g';f'}=\delta_{f,f'}\tau(g,g',f)E_{gg',f}$ for $g,g'\in G$ and $f,f'\in F$. Thus we have
\begin{align}
\label{eq1.1} \varphi(E_{g;f}E_{g';f'})&=\delta_{f,f'}\tau(g,g',f)\varphi(E_{gg',f})\\
\notag &=\delta_{f,f'}\tau(g,g',f)e_f\#(gg')\triangleleft f.
\end{align}
Directly, we have
\begin{align}
\label{eq1.2} \varphi(E_{g;f})\varphi(E_{g';f'})&=(e_f\#g\triangleleft f)(e_{f'}\#g'\triangleleft f')\\
\notag &=\delta_{f\triangleleft'(g\triangleleft f),f'}\sigma'(f,g\triangleleft f,g'\triangleleft f)e_f\#((gg')\triangleleft f)\\
\notag &=\delta_{f,f'}\tau(g,g',f)e_f\#(gg')\triangleleft f.
\end{align}
By the equations \eqref{eq1.1}-\eqref{eq1.2}, we know $\varphi$ is an algebra map. More, it is not hard to see that $\varphi^\ast(E_{f;g})=e_{g\triangleleft f^{-1}}\#f$ for $f\in F$ and $g\in G$, where $\varphi^\ast$ is the dual map of $\varphi$. To show $\varphi$ is coalgebra map, we only need to prove that $\varphi^\ast$ is an algebra map. Similarly, for the Hopf algebra $\Bbbk^{G'}\#_{\sigma',\tau'}\Bbbk F'$, we denote the dual basis of $\{e_f\#g|\;g\in G,f\in F\}$ by $\{E_{f;g}|\;g\in G,f\in F\}$, that is, $E_{f;g}(e_{f'}\#g')=\delta_{f,f'}\delta_{g,g'}$ for $g,g'\in G$ and $f,f'\in F$. Then we know $E_{f;g}E_{f';g'}=\delta_{g\triangleleft f',g'}\sigma(g'\triangleleft(ff')^{-1},f,f')E_{ff';g'}$ for $g,g'\in G$ and $f,f'\in F$. Thus we have
\begin{align}
\label{eq1.3} \varphi^\ast(E_{f;g}E_{f';g'})&=\delta_{g\triangleleft f',g'}\sigma(g'\triangleleft(ff')^{-1},f,f')\varphi^\ast(E_{ff';g'})\\
\notag &=\delta_{g\triangleleft f',g'}\sigma(g'\triangleleft(ff')^{-1},f,f')e_{g'\triangleleft (ff')^{-1}}\#ff'\\
\notag &=\delta_{g,g'\triangleleft (f')^{-1}} \sigma(g'\triangleleft(ff')^{-1},f,f')e_{g'\triangleleft (ff')^{-1}}\#ff'\\
\notag &=\delta_{g,g'\triangleleft (f')^{-1}} \sigma(g\triangleleft f^{-1},f,f')e_{g\triangleleft (f)^{-1}}\#ff'.
\end{align}
Another way, we get
\begin{align}
\label{eq1.4} \varphi^\ast(E_{f;g})\varphi^\ast(E_{f';g'})&=(e_{g\triangleleft f^{-1}}\#f)(e_{g'\triangleleft (f')^{-1}}\#f')\\
\notag &=\delta_{g,g'\triangleleft (f')^{-1}}\sigma(g\triangleleft f^{-1},f,f')e_{g\triangleleft f^{-1}}\#ff'.
\end{align}
Therefore $\varphi$ is coalgebra map by equations \eqref{eq1.3}-\eqref{eq1.4}. By definition, we get $\varphi$ is isomorphism as vector space. Hence we have completed the proof.
\end{proof}

Using this lemma, we get the following characterization of $\mathscr{B}_\lambda^\ast$ (see Example \ref{ex2.1.6}).
\begin{example}\label{ex2.1.7}
\emph{Let $p, q$ be two prime numbers such that $q\equiv 1(\text{mod}\;p)$ and let $\zeta$ be a primitive $q$th root of 1 in $\Bbbk$. Assume $m\in \mathbb{N}$ satisfying $m^p \equiv 1(\text{mod}\;q)$ and $m \not \equiv 1(\text{mod}\;p)$. Let $0\leq \lambda \leq (p-1)$, then $\mathscr{B}_\lambda^\ast$ belongs to $\Bbbk^G\#_{\sigma,\tau}\Bbbk F$. Due to Lemma \ref{lem2.5} and the Example \ref{ex2.1.6}, the data $(G,F,\triangleleft,\triangleright,\sigma,\tau)$ of $\mathscr{B}_\lambda^\ast$ is given by the following way
\begin{itemize}
             \item[(i)] $G=\mathbb{Z}_p=\langle g|\;g^p=1\rangle$,\;$F=\mathbb{Z}_{q}\times \mathbb{Z}_{q}=\langle a,b|\;a^q=b^q=1,ab=ba\rangle$. The action $\triangleleft$ is trivial, and $g^i\triangleright a=a^{m^i},g^i\triangleright b=b^{m^{\lambda i}}$, for $0\leq i \leq p-1$.
              \item[(ii)] $\sigma(g^n,a^ib^j,a^kb^l)=\zeta_{n}^{jkm^{(\lambda+1)n}}$, where $\zeta_{n}=\zeta^{c_n(m^{\lambda+1})}$, here $c_n(r):=1+r+...+r^{n-1}$ for $r\in \mathbb{Z}$ and $0\leq i,j,k,l \leq q$, $0\leq n \leq p-1$.
              \item[(iii)] $\tau(f ,f',g)=1$ for $g\in G$ and $f,f'\in F$.
\end{itemize}}
\end{example}

\begin{remark}\label{rem1.1}\emph{Recall the definition of $\mathscr{B}_\lambda$, we know $\Bbbk G(\mathscr{B}_\lambda)=\Bbbk^G$, where $G=\mathbb{Z}_q\times \mathbb{Z}_q$. By definition, we get $G(\mathscr{B}_\lambda^\ast)$ is the set of characters of $\mathscr{B}_\lambda$. Through a direct calculation, we get $|G(\mathscr{B}_0^\ast)|=pq$ and $|G(\mathscr{B}_\lambda^\ast)|=p$ for $\lambda\neq 0$. Moreover, by Example \ref{ex2.1.7}, we further obtain $\Bbbk G(\mathscr{B}_0^\ast)=\langle e_{g^i}b^j|\;0\leq i \leq p-1,0\leq i \leq q-1\rangle$ as vector space while $\Bbbk G(\mathscr{B}_\lambda^\ast)=\langle e_{g^i}|\;0\leq i \leq p-1\rangle$ as vector space for $\lambda\neq 0$.}\end{remark}

\section{Non-simple quasitriangular Hopf algebras of dimension $pq^2$ is semisimple}\label{sec2.2}

In the following content, we always assume that $p,q$ are distinct odd primes. In this section, we will prove that a non-simple quasitriangular Hopf algebra of dimension $pq^2$ must be semisimple. We start with two basic observations.
\begin{lemma}\label{lem3.0}
Let $(H, R)$ be a quasitriangular Hopf algebra of dimension $pq^2$ over $\Bbbk$. Assume $H$ is not simple as Hopf algebra and $H$ is not semisimple. Then $H$ fits into a short exact sequence \eqref{ext} such that $\emph{dim}(K)\in \{pq,q^2\}$.
\end{lemma}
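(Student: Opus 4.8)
The plan is to combine the Nichols--Zoeller freeness theorem with the two standard behaviours of a quasitriangular structure under quotients and under Hopf algebra extensions. Since $H$ is not simple, it possesses a proper nontrivial normal Hopf subalgebra $K$, giving a short exact sequence \eqref{ext} $K\xrightarrow{\iota}H\xrightarrow{\pi}\overline{H}$. By Nichols--Zoeller, $\dim K$ divides $pq^2$, and as $1<\dim K<pq^2$ we get $\dim K\in\{p,q,pq,q^2\}$. The entire content of the lemma is thus to exclude the values $p$ and $q$; once this is done, non-simplicity guarantees some proper normal Hopf subalgebra, of dimension forced into $\{pq,q^2\}$. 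Two facts will be used repeatedly: (a) a quotient Hopf algebra of a quasitriangular Hopf algebra is again quasitriangular, via $\overline{R}:=(\pi\otimes\pi)(R)$; and (b) an extension of semisimple Hopf algebras is semisimple, so $H$ is semisimple if and only if both $K$ and $\overline{H}$ are semisimple. Since $H$ is assumed non-semisimple, in every case at least one of $K,\overline{H}$ is non-semisimple.

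\emph{Case $\dim K=q$.} Then $K$ has prime dimension, so $K\cong\Bbbk\Z_q$ is a group algebra and is semisimple (\cite{ZY}), while $\overline{H}$ has dimension $pq$. By (a), $\overline{H}$ is quasitriangular, so Lemma \ref{thm2.5} gives that $\overline{H}$ is semisimple. But then both $K$ and $\overline{H}$ are semisimple, whence $H$ is semisimple by (b), contradicting the hypothesis. Hence no normal Hopf subalgebra of dimension $q$ exists.

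\emph{Case $\dim K=p$.} Again $K\cong\Bbbk\Z_p$ is semisimple, and $\overline{H}$ has dimension $q^2$ and is quasitriangular by (a). If $\overline{H}$ were semisimple, (b) would give the same contradiction; hence $\overline{H}$ is non-semisimple, and by the classification of Hopf algebras of dimension $q^2$ (\cite{MA1,NS}) it must be the Taft algebra $T_q=\Bbbk\langle g,x\mid g^q=1,\ x^q=0,\ gx=\omega xg\rangle$. It remains to show that $T_q$ admits no quasitriangular structure, which is the main obstacle and the only genuinely new input. My plan is to use the Drinfeld element $u=\sum S(\overline{R}^{(2)})\overline{R}^{(1)}$ of $(\overline{H},\overline{R})$, which is invertible, satisfies $\epsilon(u)=1$, and implements $S^2=\operatorname{ad}(u)$. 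A direct computation in $T_q$ gives $S^2=\operatorname{ad}(g^{-1})$, i.e. $S^2$ is inner by a grouplike; since $Z(T_q)=\Bbbk1$, this forces $ug\in\Bbbk1$, and $\epsilon(u)=1$ then yields $u=g^{-1}$, a grouplike element. Substituting into the identity $\Delta(u)=(\overline{R}_{21}\overline{R})^{-1}(u\otimes u)$ gives $\overline{R}_{21}\overline{R}=1\otimes1$, so $(\overline{H},\overline{R})$ is in fact triangular. As $\dim\overline{H}=q^2$ is odd, this contradicts the Etingof--Gelaki classification of finite-dimensional triangular Hopf algebras: every such algebra is a twist of a finite supergroup algebra, a nontrivial super part requires a central element of order two (hence even dimension), so an odd-dimensional triangular Hopf algebra is a twist of a group algebra and therefore semisimple. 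This contradicts $\overline{H}=T_q$ being non-semisimple, and excludes $\dim K=p$.

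Since neither $p$ nor $q$ can be the dimension of any normal Hopf subalgebra, and $H$ is not simple, the associated sequence \eqref{ext} has $\dim K\in\{pq,q^2\}$, as claimed. The decisive and least routine step is the non-existence of a quasitriangular structure on the odd Taft algebra $T_q$; if one has available the companion statement that every quasitriangular Hopf algebra of dimension $p^2$ is semisimple---proved exactly as Lemma \ref{thm2.5}, one prime at a time---then the case $\dim K=p$ closes immediately, without invoking the triangular classification.
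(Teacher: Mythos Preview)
Your argument is correct, and for the case $\dim K=q$ it coincides with the paper's proof verbatim. The difference lies in the case $\dim K=p$. The paper disposes of it in one line: the quotient $\overline{H}$ has dimension $q^2$ and is quasitriangular, and Lemma~\ref{thm2.5} (Natale's result, stated there for odd primes $p,q$ without requiring $p\neq q$) applies directly to force $\overline{H}$ to be a group algebra, hence semisimple, giving the same contradiction as in your $\dim K=q$ case. In other words, the paper treats the two small-dimension cases symmetrically and never mentions the Taft algebra at all.

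You instead identify $\overline{H}$ as the Taft algebra $T_q$, compute the Drinfeld element via $Z(T_q)=\Bbbk 1$ to conclude that any quasitriangular structure on $T_q$ is triangular, and then invoke the Etingof--Gelaki classification of triangular Hopf algebras to rule out odd non-semisimple examples. This is valid but substantially heavier machinery than what the paper uses; the Etingof--Gelaki structure theorem is a deep result, whereas the paper's route only needs the $pq$/$p^2$ case of Natale's theorem, which you yourself flag in your closing sentence as the cleaner alternative. Your Drinfeld-element computation is a nice self-contained way to see that $T_q$ would have to be triangular, but for the purposes of this lemma it is an unnecessary detour.
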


\begin{proof}
By assumption, $H$ is not simple. Thus we can assume that $H$ fits into a short exact sequence \eqref{ext} and $K\neq \Bbbk 1$ is a proper Hopf subalgebra. Since $(H,R)$ is a quasitriangular Hopf algebra and $\overline{H}$ is a quotient of $H$, we get $\overline{H}$ is a quasitriangular Hopf algebra. By Nichols-Zoeller theorem, we have $\text{dim}(K)\in \{p,q,pq,q^2\}$. To complete the proof, we only need to show $\text{dim}(K)\not\in \{p,q\}$. Assume $\text{dim}(K)=p$, then $K$ is a group algebra \cite{ZY} and $\text{dim}(\overline{H})=q^2$. Therefore $K$ is semisimple Hopf algebra. Due to Lemma \ref{thm2.5}, $\overline{H}$ is also a group algebra. Hence $\overline{H}$ is semisimple Hopf algebra. Since an extension of semisimple Hopf algebras is also semisimple \cite{BM}, we get $H$ is semisimple. But this contradicts our assumption about non-semisimplicity of $H$. Therefore $\text{dim}(K)\neq p$. Similarly, we have $\text{dim}(K)\neq q$.
\end{proof}

\begin{lemma}\label{lem3.0.1}
Let $(H, R)$ be a quasitriangular Hopf algebra of dimension $pq^2$ over $\Bbbk$. Assume $H$ is not simple as Hopf algebra and $H$ is not semisimple. Then $H^{\ast cop}\not\cong H$.
\end{lemma}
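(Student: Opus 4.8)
The plan is to argue by contradiction: assume $H^{\ast cop}\cong H$ and contradict the non-semisimplicity of $H$. By Lemma~\ref{lem3.0}, $H$ fits into a short exact sequence $K\xrightarrow{\iota}H\xrightarrow{\pi}\overline{H}$ with $\dim K\in\{pq,q^2\}$, so $\overline{H}$ has prime dimension $n$, where $n=q$ if $\dim K=pq$ and $n=p$ if $\dim K=q^2$. By \cite{ZY} a Hopf algebra of prime dimension is a group algebra, hence $\overline{H}\cong\Bbbk\Z_n$ is semisimple; since an extension of a semisimple Hopf algebra by a semisimple one is semisimple \cite{BM} while $H$ is not, the kernel $K$ must be non-semisimple. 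The first move is to dualize the sequence to the exact sequence $\overline{H}^{\ast}\to H^{\ast}\to K^{\ast}$, in which $\overline{H}^{\ast}\cong\Bbbk\Z_n$ appears as a normal Hopf subalgebra of $H^{\ast}$. Since $(-)^{cop}$ carries normal Hopf subalgebras to normal Hopf subalgebras, transporting $\overline{H}^{\ast}$ along the assumed isomorphism $H^{\ast cop}\cong H$ produces a normal Hopf subalgebra $N\cong\Bbbk\Z_n$ of $H$.

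I would then compare $N$ and $K$ through their intersection, whose dimension divides $\gcd(n,\dim K)$. If $N\cap K=\Bbbk1$, then, both $N$ and $K$ being normal with $\dim N\cdot\dim K=\dim H$, Nichols--Zoeller freeness gives an exact factorization $H=NK$, and the restriction $\pi_N|_K\colon K\to H/HN^{+}$ is an isomorphism of Hopf algebras; in particular $K$ is a quotient Hopf algebra of $H$ and therefore inherits the quasitriangular structure $(\pi_N\otimes\pi_N)(R)$. For $\dim K=pq$ this contradicts Lemma~\ref{thm2.5}, which forces any quasitriangular Hopf algebra of dimension $pq$ to be semisimple; for $\dim K=q^2$ the only non-semisimple Hopf algebra of that dimension is the Taft algebra \cite{MA1,NS}, and the Taft algebra of dimension $q^2$ with $q$ odd admits no quasitriangular structure, again a contradiction. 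The only other possibility is $N\subseteq K$, which by coprimality can occur only when $\dim K=pq$ and $n=q$; there $N$ is normal in $K$, the quotient $K/KN^{+}$ has dimension $p$ and so is isomorphic to $\Bbbk\Z_p$ by \cite{ZY}, whence $K$ fits into an exact sequence $\Bbbk\Z_q\to K\to\Bbbk\Z_p$ of semisimple Hopf algebras and is itself semisimple by \cite{BM} --- contradicting the non-semisimplicity of $K$.

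Since every case yields a contradiction, $H^{\ast cop}\not\cong H$. Two steps call for care. The first is the structural claim that a normal Hopf subalgebra $N$ with a trivially intersecting Hopf subalgebra $K$ of complementary dimension realizes $K$ as the quotient $H/HN^{+}$; this is what turns $K$ into a quasitriangular Hopf algebra and rests on the freeness of $H$ over $N$ together with the dimension count $\dim(H/HN^{+})=\dim H/\dim N$. The second, and the main obstacle, is the $\dim K=q^2$ case: unlike the $pq$ case it cannot be closed using Natale's theorem, and genuinely depends on the fact that the Taft algebra of odd prime-square dimension carries no quasitriangular structure (here the contrast with the Sweedler algebra, $q=2$, is essential).
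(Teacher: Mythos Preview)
Your argument is correct, but the paper reaches the contradiction much more directly. Rather than transporting the normal Hopf subalgebra $\overline{H}^{\ast}$ through the assumed isomorphism $H^{\ast cop}\cong H$ and then running a case analysis on $N\cap K$, the paper simply uses the isomorphism to conclude that $H^{\ast}$ itself is quasitriangular (since $H$ is, and passing to $(-)^{cop}$ preserves quasitriangularity). Dualizing the inclusion $K\hookrightarrow H$ exhibits $K^{\ast}$ as a Hopf quotient of $H^{\ast}$, so $K^{\ast}$ is quasitriangular of dimension $pq$ or $q^{2}$ and therefore semisimple; hence $K$ is semisimple, and together with the semisimplicity of $\overline{H}$ this forces $H$ semisimple --- the desired contradiction, in three lines.

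Your route buys nothing extra and costs you the exact-factorization step ``$\pi_{N}|_{K}\colon K\to H/HN^{+}$ is an isomorphism''. That step is true, but it is not quite as immediate as ``Nichols--Zoeller freeness'' alone: one really needs that $K\cap HN^{+}=0$, which follows from $K\cap N=\Bbbk 1$ only via the identification $H^{\,co(H/HN^{+})}=N$ (a Schneider-type result) together with the dimension formula for the coinvariants of the induced surjection $K\twoheadrightarrow \pi_{N}(K)$. The paper's argument sidesteps this entirely. On the other hand, you are more scrupulous than the paper about the $\dim K=q^{2}$ case: the paper invokes Lemma~\ref{thm2.5} (stated for distinct primes $p,q$) uniformly, while you correctly single out that the $q^{2}$ case rests on the separate fact that the Taft algebra of odd prime-square dimension admits no quasitriangular structure.
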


\begin{proof}
By Lemma \ref{lem3.0}, we can assume that $H$ fits into a short exact sequence \eqref{ext} such that $\text{dim}(K)\in \{pq,q^2\}$. Assume $H^{\ast cop}\cong H$. Since $(H, R)$ is quasitriangular Hopf algebra, we know $H^{\ast cop}$ is also a quasitriangular Hopf algebra. This implies $H^\ast$ is quasitriangular Hopf algebra. Because $K^\ast$ is a quotient of $H^\ast$, we obtain $K^\ast$ is quasitriangular Hopf algebra. Moreover, $\text{dim}(K^\ast)\in \{pq,q^2\}$. By Lemma \ref{thm2.5}, $K^\ast$ is trivial and hence $K$ is semisimple. However, we know $\overline{H}$ is also semisimple due to the dimensional reason. Because an extension of semisimple Hopf algebras is also semisimple \cite{BM}, we get that $H$ must be a semisimple Hopf algebra. This contradicts our assumption about non-semisimplicity of $H$. Therefore we have $H^{\ast cop}\not\cong H$.
\end{proof}

Now we can get what we want.
\begin{proposition}\label{pro3.1}
Let $(H, R)$ be a quasitriangular Hopf algebra of dimension $pq^2$ over $\Bbbk$. Assume $H$ is not simple as Hopf algebra, then $H$ is semisimple.
\end{proposition}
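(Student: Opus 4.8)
The plan is to argue by contradiction: suppose $(H,R)$ is \emph{not} semisimple. Since $\dim H=pq^2$ is odd, Lemma \ref{lem2.3} transfers non-semisimplicity to the left Hopf subalgebra, so $H_l$ is not semisimple, and the same then holds for $H_r\cong H_l^{\ast cop}$ (which in particular has the same dimension as $H_l$). First I would pin down $d:=\dim H_l=\dim H_r$ by Nichols--Zoeller: $d\mid pq^2$ and $d>1$, for otherwise $R=1\otimes 1$ and $H$ is cocommutative, hence a group algebra and semisimple. If $d\in\{p,q\}$ then $H_l$ is a group algebra by \cite{ZY}, hence semisimple, forcing $H$ semisimple via Lemma \ref{lem2.3}; if $d=pq^2$ then $H_l=H_r=H$ and $H\cong H^{\ast cop}$, contradicting Lemma \ref{lem3.0.1}. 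Hence $d\in\{pq,q^2\}$.

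Next I would observe that $H_l$ (and likewise $H_r$) is \emph{simple} as a Hopf algebra: a non-simple Hopf algebra of dimension $pq$ or $q^2$ sits in a short exact sequence whose kernel and cokernel have prime dimension, hence are group algebras and semisimple, so by stability of semisimplicity under extensions \cite{BM} it would itself be semisimple, a contradiction. Now take the extension $K\xrightarrow{\iota}H\xrightarrow{\pi}\overline H$ supplied by Lemma \ref{lem3.0}, with $\dim K\in\{pq,q^2\}$ and therefore $\dim\overline H\in\{p,q\}$. Applying Lemma \ref{lem2.4} to the simple subalgebra $H_l$: since $\dim H_l=d>\dim\overline H$, the restriction $\pi|_{H_l}$ cannot be injective, so $H_l\subseteq\iota(K)$; comparing dimensions (using $d\mid\dim K$) forces $\dim K=d$ and $H_l=\iota(K)$. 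Feeding the simple subalgebra $H_r$ into Lemma \ref{lem2.4} with the \emph{same} extension, the injective alternative fails again for dimension reasons, so $H_r\subseteq\iota(K)=H_l$, whence $H_r=H_l$. As the legs of $R$ span $H_l$ and $H_r$, the equality $H_l=H_r$ places $R$ in $H_l\otimes H_l$, so $(H_l,R)$ is itself a non-semisimple quasitriangular Hopf algebra of dimension $d$.

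It remains to rule out the two values of $d$. For $d=pq$ this is immediate from Lemma \ref{thm2.5}: every quasitriangular Hopf algebra of dimension $pq$ with $p,q$ odd primes is semisimple, contradicting the non-semisimplicity of $H_l$. The case $d=q^2$ is the crux and the main obstacle, since there is no analogue of Lemma \ref{thm2.5} available: here I must show that \emph{no} non-semisimple quasitriangular Hopf algebra of dimension $q^2$ exists. By the classification of Hopf algebras of dimension $q^2$ \cite{MA1,NS}, such an $H_l$ would be a Taft algebra $T_{q^2}$, so the task is to prove that $T_{q^2}$ carries no quasitriangular structure for an odd prime $q$. I expect to settle this through the Drinfeld element $u=\sum S(R^{(2)})R^{(1)}$: in $T_{q^2}$ one computes $S^2=\operatorname{ad}_{g^{-1}}$, and since the center of $T_{q^2}$ is trivial, the relation $S^2(h)=uhu^{-1}$ forces $u=g^{-1}$ to be group-like; a group-like Drinfeld element makes $R$ triangular, i.e. $R_{21}R=1\otimes1$, and a non-semisimple triangular Hopf algebra over $\Bbbk$ is even-dimensional by the Etingof--Gelaki structure theorem, contradicting $\dim T_{q^2}=q^2$ odd. (Alternatively one invokes directly the known fact that Taft algebras of dimension greater than $4$ are not quasitriangular.) Either way $d=q^2$ is impossible, and the contradiction obtained in both cases shows that $H$ is semisimple.
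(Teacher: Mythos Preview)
Your argument is essentially correct, but there is one small slip and one point where the paper's route diverges from yours.

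\textbf{The slip.} You write ``since $\dim H_l = d > \dim\overline H$, the restriction $\pi|_{H_l}$ cannot be injective.'' This inequality can fail: with $d=q^2$ and $\dim\overline H=p$ one may well have $q^2<p$ (e.g.\ $q=3$, $p=11$). What you actually need, and what Lemma~\ref{lem2.4} gives via Nichols--Zoeller, is the divisibility obstruction: if $\pi|_{H_l}$ were injective then $d\mid\dim\overline H$, and none of $pq\mid p$, $pq\mid q$, $q^2\mid p$, $q^2\mid q$ hold. With this correction the rest of your chain ($H_l\subseteq\iota(K)$, then $d\mid\dim K$ with $d,\dim K\in\{pq,q^2\}$ forcing $d=\dim K$, hence $H_l=\iota(K)=H_r$) goes through.

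\textbf{Comparison with the paper.} The paper does not attack $\dim H_l$ directly. Instead it first looks at the minimal quasitriangular subalgebra $H_R$: since $H_R$ is itself quasitriangular and non-semisimple, Lemma~\ref{thm2.5} (read so as to cover $q\cdot q$ as well as $p\cdot q$) forces $\dim H_R=pq^2$; then the divisibility $\dim H_R\mid(\dim H_l)^2$, coming from $H_R$ being a quotient of $D(H_l)$, pins $\dim H_l$ down to $\{pq,\,pq^2\}$. In particular the paper never meets the case $\dim H_l=q^2$, so it never has to argue that an odd Taft algebra is not quasitriangular. You bypass $H_R$ and the Drinfeld-double divisibility entirely and instead confront the Taft case head-on, disposing of it via the Drinfeld element (forcing $u=g^{-1}$ group-like, hence $R$ triangular) together with the Etingof--Gelaki/Andruskiewitsch--Etingof--Gelaki classification, or alternatively the folklore that $T_{n^2}$ is quasitriangular only for $n=2$. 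Both routes ultimately rest on the same fact---that a non-semisimple quasitriangular Hopf algebra of dimension $q^2$ cannot exist for odd $q$---but the paper hides it inside Lemma~\ref{thm2.5}, while you supply an independent (if heavier) proof. The paper's shortcut via $\dim H_R\mid(\dim H_l)^2$ is arguably more economical; your approach is more self-contained at the $q^2$ step.
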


\begin{proof}
Suppose that $H$ is not semisimple. By Lemma \ref{lem2.3}, $H_R$ is not semisimple. Due to Nichols-Zoeller theorem \cite{NZ} and the classification of Hopf algebras of prime dimension \cite{ZY}, we must have $\text{dim}(H_R)\in \{pq,q^2,pq^2\}$. By Lemma \ref{thm2.5}, we must have $\text{dim}(H_R)=pq^2$. Since $H_R$ is a quotient of $D(H_l)$ \cite[Theorem 2]{R2}, it follows that $\text{dim}(H_R)|\text{dim}(H_l)^2$. Thus $\text{dim}(H_l)\in \{pq,pq^2\}$.

Assume $\text{dim}(H_l)=pq$. By Lemma \ref{lem2.3}, we get $H_l$ is not semisimple. Since the dimensional reason, we obtain $H_l$ must be simple as Hopf algebra. Due to Lemma \ref{lem3.0}, we can assume that $H$ fits into a short exact sequence \eqref{ext} such that $\text{dim}(K)\in \{pq,q^2\}$. Assume $\text{dim}(K)= pq$. Since $H_l$ is simple as Hopf algebra, we can use Lemma \ref{lem2.4} to get $H_l\subseteq K$ or $H_l\cap K=\Bbbk 1$. Since $\text{dim}(H_l)=pq$ and $\text{dim}(\overline{H})=q$, we know $\pi|_{H_l}$ is not injective. By Lemma \ref{lem2.4}, we get $H_l\cap K\neq \Bbbk 1$. Thus $H_l\subseteq K$. Because $\text{dim}(H_l)=\text{dim}(K)=pq$ in this case, we obtain $H_l=K$. Similarly, we have $H_r=K$. Therefore $H_R=K$. Since $\text{dim}(K)=pq$, we have $\text{dim}(H_R)=pq$. By Lemma \ref{thm2.5}, we get $H_R$ is semisimple. But this contradicts with the previous conclusion about $H_R$, so $\text{dim}(K)\neq pq$. Suppose $\text{dim}(K)=q^2$. By Lemma \ref{lem2.4}, $H_l\subseteq K$ or $H_l\cap K=\Bbbk 1$. Since $\text{dim}(H_l)=pq$ and $\text{dim}(K)=q^2$, we get $H_l\neq K$. Hence $H_l\cap K=\Bbbk 1$. This implies that $\pi|_{H_l}$ is injective by Lemma \ref{lem2.4} again. Since $\text{dim}(\overline{H})=p$ and  $\text{dim}(H_l)=pq$, we get $\pi|_{H_l}$ is not injective. Thus $\text{dim}(K)\neq q^2$. Then we get a conclusion that $\text{dim}(H_l)\neq pq$.

Assume $\text{dim}(H_l)=pq^2$. Then we have $H_l=H_r=H$ in this case. Thus $H^{\ast cop}\cong H$. But this can't happen by Lemma \ref{lem3.0.1}. Hence the non-semisimple hypothesis for $H$ is not true.
\end{proof}

\section{Determination of Quasitriangular Structures}
In this section, we will settle the proof of Theorems \ref{thm1.1} and \ref{thm1.2}.
\subsection{An range of Hopf algebras.} The classification of non-trivial semisimple Hopf algebras of dimension $pq^2$ over $\Bbbk$ which are not simple as Hopf algebras has been given in \cite{Na2}. To state this result, we need a set of numbers. Since $p$ is an odd prime, we can find a subset $\{\lambda_i|\;1\leq i \leq (p-1)/2\}$ from the set $\{1,2,...,p-2\}$ such that $\lambda_i\lambda_j\neq 1(\text{mod} \;p)$ if $i\neq j$. Next, we always assume that we have chosen a subset $\{\lambda_i|\;1\leq i \leq (p-1)/2\}$ so that $\lambda_i\lambda_j\neq 1(\text{mod} \;p)$ if $i\neq j$.  For example, if $p=5$ then we can choose $\{1,2\}$ to be a needed subset. The following result is \cite[Theorem 3.12.4]{Na2}.
\begin{lemma}\label{thm3.2}
Suppose $A$ is a non-trivial semisimple Hopf algebras of dimension $pq^2$ over $\Bbbk$ which are not simple as Hopf algebra. Then either
\begin{itemize}
             \item[(i)] $p\equiv 1(\emph{mod} \;q)$, and $A$ is isomorphic to either of the Hopf algebras $\mathscr{A}_l$, $0 \leq l \leq q-1$ in Example \ref{ex2.1.5}, or
              \item[(ii)] $q\equiv 1(\emph{mod} \;p)$, and $A$ is isomorphic to either of the Hopf algebras $\mathscr{B}_{0}$ or $\mathscr{B}_{\lambda_j}$, $1\leq j \leq (p-1)/2$ and their duals. See Example \ref{ex2.1.6}.
\end{itemize}
\end{lemma}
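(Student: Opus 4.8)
The result is a classification of semisimple Hopf algebras, so the plan is to follow the standard three-step scheme for small dimensions: reduce $A$ to an abelian extension, enumerate the matched pairs of groups, and classify the cocycle data up to equivalence and isomorphism. For the first step, since $A$ is semisimple and not simple it admits a proper normal Hopf subalgebra $K\neq\Bbbk 1$, giving a short exact sequence $K\to A\to\overline{A}$. By the Nichols--Zoeller theorem $\dim K$ is a proper nontrivial divisor of $pq^2$, so $\dim K\in\{p,q,pq,q^2\}$ and correspondingly $\dim\overline{A}\in\{q^2,pq,q,p\}$. In each case both factors are semisimple of dimension $p$, $q$, $pq$ or $q^2$, hence trivial: dimensions $p$ and $q$ force group algebras (Zhu), dimension $pq$ is trivial by Lemma~\ref{thm2.4}, and dimension $q^2$ is trivial by Masuoka's classification in prime-squared dimension. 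Thus $A$ is an extension of a group algebra or its dual by another such. The genuine content here is to upgrade this to an \emph{abelian} extension $\Bbbk^G\to A\to\Bbbk F$, that is, to exhibit a commutative normal Hopf subalgebra with cocommutative quotient; after possibly replacing $A$ by $A^\ast$ I would do this using the Hopf subalgebras generated by the group-likes of $A$ and of $A^\ast$, and then write $A\cong\Bbbk^G\#_{\sigma,\tau}\Bbbk F$ with $|G|\,|F|=pq^2$.

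Next I would analyze the matched pairs. Each abelian extension is governed by a matched pair $(F,G,\triangleleft,\triangleright)$, equivalently by a group $\Gamma$ of order $pq^2$ with an exact factorization $\Gamma=FG$, $F\cap G=\{1\}$. The plan is to list the groups of order $pq^2$ together with their exact factorizations and discard the cases in which both actions are trivial, since these make $A$ a group algebra or its dual and so contradict non-triviality. A nontrivial action can exist only when one prime divides the other minus one: either $q\mid p-1$, which forces the factor $\Z_p\rtimes\Z_q$ with $F=\Z_q$ acting as in Example~\ref{ex2.1.5} (the family $\mathscr{A}_l$), or $p\mid q-1$, which forces $G=\Z_q\times\Z_q$ with $F=\Z_p$ acting as in Example~\ref{ex2.1.6} (the family $\mathscr{B}_\lambda$). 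This is exactly where the dichotomy $p\equiv 1\pmod q$ versus $q\equiv 1\pmod p$ enters.

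With the matched pair fixed, the equivalence classes of compatible pairs $(\sigma,\tau)$ form the abelian group $\operatorname{Opext}(\Bbbk^G,\Bbbk F)$, which I would compute through the Kac exact sequence relating it to $\H^2(G,\Bbbk^\times)$, $\H^2(F,\Bbbk^\times)$ and the equivariant cohomology of the matched pair. In the $\mathscr{A}$ case $\triangleright$ is trivial and $\tau$ normalizes to $1$, leaving a cyclic family of $\sigma$'s of order $q$ and hence $\mathscr{A}_0,\dots,\mathscr{A}_{q-1}$; in the $\mathscr{B}$ case $\sigma$ is trivial and the freedom in $\tau$ is indexed by the exponent $\lambda$, producing $\mathscr{B}_\lambda$ together with the duals $\mathscr{B}_\lambda^\ast$.

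The hard part will be to pass from this parametrized list to an \emph{irredundant} set of representatives, i.e.\ to decide precisely when two members are isomorphic as Hopf algebras. This amounts to computing the orbits of $\operatorname{Opext}$ under the automorphisms of the matched pair, that is, under the relevant action of $\Aut(G)\times\Aut(F)$, and to tracking how dualization acts on the resulting classes. For the $\mathscr{B}$ family I expect the automorphism of $\Z_q\times\Z_q$ that interchanges the two eigenvalues of the $\Z_p$-action to realize the isomorphism $\mathscr{B}_\lambda\cong\mathscr{B}_{\lambda^{-1}}$, which is exactly the relation broken by choosing a subset $\{\lambda_j\mid 1\leq j\leq(p-1)/2\}$ with $\lambda_i\lambda_j\not\equiv 1\pmod p$ for $i\neq j$; the duals $\mathscr{B}_\lambda^\ast$, which differ already at the level of group-likes (cf.\ Example~\ref{ex2.1.7}), then contribute genuinely new classes. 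For the $\mathscr{A}$ family the same analysis should show the $\mathscr{A}_l$ are pairwise non-isomorphic and that the family is already stable under duality, consistent with $\mathscr{A}_0$ being self-dual, so that no separate duals are needed. Executing these orbit and duality computations explicitly is where essentially all of the work resides.
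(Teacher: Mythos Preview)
The paper does not prove this lemma at all: it is stated verbatim as \cite[Theorem 3.12.4]{Na2} and used as a black box. Your proposal is therefore not a comparison target in the usual sense---you are sketching a proof of Natale's classification theorem, whereas the present paper simply imports it.

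That said, your outline is broadly the right shape for how such a classification is actually carried out in \cite{Na2}: reduction to an abelian extension, enumeration of matched pairs $(F,G)$ with $|F|\,|G|=pq^2$, computation of $\operatorname{Opext}(\Bbbk^G,\Bbbk F)$ via the Kac exact sequence, and then an orbit analysis under automorphisms of the matched pair together with duality. Two small corrections to your expectations: first, \emph{all} of the $\mathscr{A}_l$ are self-dual (this is \cite[Lemma 1.3.9]{Na2}, and the present paper uses it in Section~4.3), not just $\mathscr{A}_0$; second, the step you call ``upgrade to an abelian extension'' is not a light remark---in \cite{Na2} it occupies a substantial portion of the argument and is where the hypothesis that $A$ is non-trivial and non-simple really does work, so in an actual write-up you would need to justify it carefully rather than gesture at group-likes. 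But none of this is relevant to the paper under review, which treats the lemma as established input.
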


Combining Proposition \ref{pro3.1} and this lemma, we have
\begin{corollary}\label{cor4.2} With notions above and let $H$ be a non-simple quasitriangular Hopf algebra of dimension $pq^2$, then it is isomorphic to one of the following Hopf algebras:
\begin{itemize}
    \item[1)] A group algebra;
    \item[2)] $\mathscr{A}_l$ for $0 \leq l \leq q-1$;
    \item[3)] $\mathscr{B}_{0}$, $\mathscr{B}_{\lambda_j}$, $\mathscr{B}_{0}^{\ast}$, $\mathscr{B}_{\lambda_j}^{\ast}$ for $1\leq j \leq (p-1)/2$.
\end{itemize}
\end{corollary}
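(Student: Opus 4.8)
The plan is to read the list off directly from two inputs: the semisimplicity established in Proposition~\ref{pro3.1} and Natale's classification recorded in Lemma~\ref{thm3.2}. Since $H$ is non-simple and quasitriangular of dimension $pq^2$, Proposition~\ref{pro3.1} gives at once that $H$ is semisimple. It therefore remains only to decide which semisimple Hopf algebras of dimension $pq^2$ actually occur, and I would organize this by the standard dichotomy between trivial and non-trivial semisimple Hopf algebras.

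For the non-trivial semisimple case there is essentially nothing to do beyond quoting Lemma~\ref{thm3.2}: when $p\equiv 1\pmod q$ it forces $H\cong\mathscr{A}_l$ for some $0\le l\le q-1$, which is possibility 2), and when $q\equiv 1\pmod p$ it forces $H$ to be one of $\mathscr{B}_0,\mathscr{B}_{\lambda_j}$ or one of their duals, which is possibility 3). Here I would rely on Natale's statement that the family $\{\mathscr{A}_l\}$ is already closed under taking duals, so that no separate duals need to be listed in 2). No use of the quasitriangular hypothesis is required in this case.

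The only case that genuinely uses quasitriangularity is the trivial semisimple one, where $H\cong\Bbbk G$ or $H\cong\Bbbk^G$ for a group $G$ of order $pq^2$. If $H\cong\Bbbk G$ we are already in possibility 1). To dispose of $H\cong\Bbbk^G$, I would invoke the braiding obstruction: a quasitriangular structure makes $\Rep(H)$ a braided tensor category, whereas $\Rep(\Bbbk^G)$ is the category of $G$-graded vector spaces, which admits a braiding only when $G$ is abelian, since the required isomorphism $\Bbbk_{gh}\to\Bbbk_{hg}$ between simple objects forces $gh=hg$ for all $g,h\in G$. Hence $G$ is abelian and $\Bbbk^G\cong\Bbbk\widehat{G}$ is again a group algebra, returning us to possibility 1). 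This elementary observation—that quasitriangularity removes the duals of non-abelian group algebras from the trivial case—is the only step not carried by a direct citation, and so is the (mild) crux of the argument; the final step is simply to assemble the three mutually exclusive outcomes into the stated list.
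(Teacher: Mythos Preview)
Your proposal is correct and follows the same route as the paper, which proves the corollary in a single line by combining Proposition~\ref{pro3.1} with Lemma~\ref{thm3.2}. The paper leaves implicit the step you spell out---that a quasitriangular $\Bbbk^G$ forces $G$ to be abelian, so the trivial case collapses to group algebras---and your explicit treatment of this point via the braiding on $\operatorname{Vec}_G$ is a welcome clarification rather than a departure in strategy.
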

It is well-known that a group algebra is cocommutative and thus it always has a trivial quasitriangular structure. Therefore, to complete the classification of non-simple quasitriangular Hopf algebras of dimension $pq^2$, we need consider possible quasitriangular structures on Hopf algebras stated in 2) and 3) of above corollary.
\subsection{Quasitriangular structures on a group algebra of dimension $pq^2$.}
Let $G$ be a finite group. If $G$ is happened to be an abelian group, then  it is well-known that quasitriangular structures are just bicharacters and so we only consider the question for nonabelian groups in this subsection. For the purpose, we first need the following lemma.
\begin{lemma}\label{lem3.y} Let $G$ be a group and  $K\subseteq G$  a normal and commutative subgroup. Assume that $\{e_k\}_{k\in K}$ are the orthogonal idempotents of $\Bbbk [K]$ and $R=\sum_{k,k'\in K}w(k,k')e_k\otimes e_{k'}$, where $w$ is a bicharacter on $K$. For $g\in G$, we denote $ge_{k}g^{-1}=\varphi_g(k)$, $k\in K$. Then $R$ is a quasitriangular structure on $\Bbbk [G]$ if and only if $w(k,k')=w(\varphi_g(k),\varphi_g(k'))$ for $k,k'\in K$ and $g\in G$.
\end{lemma}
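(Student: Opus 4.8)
The plan is to verify directly the three defining axioms of a quasitriangular structure for $R=\sum_{k,k'\in K}w(k,k')e_k\otimes e_{k'}$, and to observe that the two ``coproduct'' axioms hold automatically from the bicharacter hypothesis, while the compatibility axiom $\Delta^{op}(h)R=R\Delta(h)$ is exactly the invariance condition on $w$. First I would record the Hopf data of the idempotents: since $K$ is a finite abelian group and $\Bbbk$ is algebraically closed of characteristic zero, $\Bbbk[K]$ is semisimple, and (after a suitable identification of $K$ with its character group indexing the $e_k$) one has $e_ke_{k'}=\delta_{k,k'}e_k$, $\sum_k e_k=1$, $\epsilon(e_k)=\delta_{k,1}$, and $\Delta(e_k)=\sum_{k_1k_2=k}e_{k_1}\otimes e_{k_2}$. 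Invertibility of $R$ is then immediate with $R^{-1}=\sum_{k,k'}w(k,k')^{-1}e_k\otimes e_{k'}$, because the $e_k\otimes e_{k'}$ form a complete orthogonal system of idempotents in $\Bbbk[K]\otimes\Bbbk[K]$ and each $w(k,k')\in\Bbbk^\times$.

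Next I would dispatch axioms $(\Delta\otimes\id)(R)=R_{13}R_{23}$ and $(\id\otimes\Delta)(R)=R_{13}R_{12}$. Using the coproduct formula, $(\Delta\otimes\id)(R)=\sum_{k_1,k_2,k'}w(k_1k_2,k')e_{k_1}\otimes e_{k_2}\otimes e_{k'}$, while expanding $R_{13}R_{23}$ and using orthogonality of the idempotents in the last slot gives $\sum_{k_1,k_2,k'}w(k_1,k')w(k_2,k')e_{k_1}\otimes e_{k_2}\otimes e_{k'}$; comparing coefficients these agree precisely because $w$ is multiplicative in its first argument. The second axiom is symmetric and uses multiplicativity in the second argument. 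Thus both hold for any bicharacter $w$, independently of the ambient group $G$.

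The substance of the lemma lies in the axiom $\Delta^{op}(h)R=R\Delta(h)$. Since $\Delta$ and $\Delta^{op}$ are algebra maps, the set of $h$ satisfying this identity is a subalgebra of $\Bbbk[G]$, so it suffices to test $h=g$ for $g\in G$; as $\Bbbk[G]$ is cocommutative, $\Delta^{op}(g)=\Delta(g)=g\otimes g$, and the axiom reduces to the requirement that $R$ commute with $g\otimes g$ for all $g\in G$. Here I would invoke normality of $K$: conjugation by $g$ is an algebra automorphism of $\Bbbk[K]$, hence permutes its primitive idempotents, and this permutation is exactly $\varphi_g$, so $ge_kg^{-1}=e_{\varphi_g(k)}$. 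Then $(g\otimes g)R(g^{-1}\otimes g^{-1})=\sum_{k,k'}w(k,k')e_{\varphi_g(k)}\otimes e_{\varphi_g(k')}$, and reindexing along $\varphi_g$ and comparing coefficientwise with $R$ (again using linear independence of the $e_m\otimes e_{m'}$) shows that $R$ commutes with every $g\otimes g$ if and only if $w(\varphi_g(k),\varphi_g(k'))=w(k,k')$ for all $k,k'\in K$ and all $g\in G$. Together with the first two paragraphs this gives the stated equivalence.

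The main obstacle is essentially bookkeeping rather than conceptual: making the permutation $\varphi_g$ precise, that is, justifying via normality that conjugation carries primitive idempotents to primitive idempotents so that $\varphi_g$ is a well-defined map $K\to K$ and $w(\varphi_g(k),\varphi_g(k'))$ makes sense, and then carefully using the linear independence of the idempotent tensors to pass from the operator identity to the scalar identity on $w$. Beyond this careful tracking of indices there is no deeper difficulty.
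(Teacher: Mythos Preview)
Your proposal is correct and is precisely the direct computation the paper has in mind; the paper's own proof reads in its entirety ``By a direct computation.'' Your verification of the two coproduct axioms via the bicharacter identities and your reduction of $\Delta^{op}(h)R=R\Delta(h)$ to the conjugation-invariance condition (using cocommutativity and normality of $K$) spell out exactly that computation.
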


\begin{proof}
By a direct computation.
\end{proof}

The following lemma is the result in \cite[Section 4]{CG}. And we try to keep the notation in \cite[Section 4]{CG} as much as possible.
\begin{lemma}\label{lem3.z}
Let $G$ be a group of order $pq^2$\emph{(}$q>p$\emph{)}. Then $G$ is isomorphic to the following cases
\begin{itemize}
  \item[(1)] $\beta_1=\mathbb{Z}_{pq^2}$;
  \item[(2)]  $\beta_2=\mathbb{Z}_{p}\bigoplus\mathbb{Z}_{q}\bigoplus\mathbb{Z}_{q}$;
  \item[(3)] $p\mid (q^2-1)$ and  $\beta_3=\langle s,t|\;s^{q^2}=t^p=1,tst^{-1}=s^m\rangle$, $m\in \mathbb{N}$ satisfying $m^p \equiv 1(\emph{mod}\;q^2)$ and $m \not \equiv 1(\emph{mod}\;q^2)$;
  \item[(4)] $p\mid (q-1)$ and  $\beta_4=\langle s,t,u|\;s^{q}=t^q=u^p=1,tst^{-1}=s,usu^{-1}=s,utu^{-1}=t^m\rangle$, $m\in \mathbb{N}$ satisfying $m^p \equiv 1(\emph{mod}\;q)$ and $m \not \equiv 1(\emph{mod}\;q)$;
  \item[(5)] $p\mid (q-1)$ and $\beta_5=\langle s,t,u|\;s^{q}=t^q=u^p=1,tst^{-1}=s,usu^{-1}=s^m,utu^{-1}=t^m\rangle$, $m\in \mathbb{N}$ satisfying $m^p \equiv 1(\emph{mod}\;q)$ and $m \not \equiv 1(\emph{mod}\;q)$;
  \item[(6)] $p\mid (q-1)$ and $\beta_6=\langle s,t,u|\;s^{q}=t^q=u^p=1,tst^{-1}=s,usu^{-1}=s^m,utu^{-1}=t^n\rangle$, $m\in \mathbb{N}$ satisfying $m^p \equiv 1(\emph{mod}\;q),n^p \equiv 1(\emph{mod}\;q)$ and $m \not \equiv 1(\emph{mod}\;q),n\not \equiv 1(\emph{mod}\;q)$ and $m \not \equiv n(\emph{mod}\;q)$;
  \item[(7)] $p\mid (q+1)$ and $\beta_7=\langle s,t,u|\;s^{q}=t^q=u^p=1,tst^{-1}=s,usu^{-1}=t,utu^{-1}=s^mt^n,u^qsu^{-q}=s\rangle$, $m,n\in \mathbb{N}$.
\end{itemize}
Moreover, if $G$ is not abelian, i.e. $G\in \{\beta_i|\;3\leq i \leq 7\}$, then $G$ has a largest non-trivial commutative normal subgroup, that is, it contains all other non-trivial commutative normal subgroups \emph{(}see next proposition for the construction of this largest subgroup\emph{)}.
\end{lemma}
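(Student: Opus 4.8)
The plan is to prove the classification by the standard route of Sylow theory followed by an analysis of order-$p$ automorphisms of a group of order $q^2$, which is natural here because $|G|=pq^2$ with $p<q$ distinct odd primes.

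First I would locate a normal Sylow subgroup. Let $n_q$ be the number of Sylow $q$-subgroups. Sylow's theorems give $n_q\equiv 1\pmod q$ and $n_q\mid p$; since $p<q$ the only possibility is $n_q=1$, so the Sylow $q$-subgroup $Q$, of order $q^2$, is normal, and being of order $q^2$ it is abelian, hence $Q\cong\mathbb{Z}_{q^2}$ or $Q\cong\mathbb{Z}_q\times\mathbb{Z}_q$. A Sylow $p$-subgroup $P\cong\mathbb{Z}_p$ satisfies $P\cap Q=1$ and $PQ=G$, so (by Schur--Zassenhaus, or simply because $\gcd(p,q^2)=1$) the extension splits and $G\cong Q\rtimes_\phi\mathbb{Z}_p$ for a homomorphism $\phi\colon\mathbb{Z}_p\to\mathrm{Aut}(Q)$. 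Because $Q$ is the unique, hence characteristic, Sylow $q$-subgroup, any isomorphism of such groups carries $Q$ to $Q$; thus $(Q,\phi)$ and $(Q,\phi')$ give isomorphic groups exactly when $\phi,\phi'$ have conjugate images in $\mathrm{Aut}(Q)$ up to reparametrising $\mathbb{Z}_p$ by $\mathrm{Aut}(\mathbb{Z}_p)$. The whole problem then reduces to listing the conjugacy classes of order-$p$ cyclic subgroups of $\mathrm{Aut}(Q)$ for each shape of $Q$ and transcribing each into generators and relations.

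Next I would treat the cyclic case $Q=\mathbb{Z}_{q^2}$, where $\mathrm{Aut}(Q)\cong(\mathbb{Z}/q^2)^\times$ is cyclic of order $q(q-1)$. A trivial $\phi$ gives $G=Q\times P\cong\mathbb{Z}_{pq^2}$, that is $\beta_1$; a nontrivial $\phi$ requires an order-$p$ element of a cyclic group of order $q(q-1)$, hence $p\mid(q-1)$ (so in particular $p\mid(q^2-1)$), and then the order-$p$ subgroup is unique, so with $m$ of multiplicative order $p$ modulo $q^2$ this is exactly $\beta_3$. Then I would treat $Q=\mathbb{Z}_q\times\mathbb{Z}_q$, where $\mathrm{Aut}(Q)\cong\mathrm{GL}_2(\mathbb{F}_q)$. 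Trivial $\phi$ gives the abelian group $\beta_2$, while a nontrivial $\phi$ needs an order-$p$ element of $\mathrm{GL}_2(\mathbb{F}_q)$, which exists iff $p\mid|\mathrm{GL}_2(\mathbb{F}_q)|=q(q-1)^2(q+1)$, i.e.\ iff $p\mid(q-1)$ or $p\mid(q+1)$; since $p$ is odd these are mutually exclusive. If $p\mid(q-1)$ the element is semisimple with eigenvalues in $\mathbb{F}_q^\times$, and I would classify it by its unordered eigenvalue pair $\{\alpha,\beta\}$ of $p$-th roots of unity (up to simultaneous $p$-th powering): $\{1,\alpha\}$ with $\alpha\neq 1$ gives $\beta_4$, the scalar pair $\{\alpha,\alpha\}$ with $\alpha\neq1$ gives $\beta_5$, and a pair of distinct nontrivial roots gives $\beta_6$. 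If $p\mid(q+1)$, no eigenvalue lies in $\mathbb{F}_q$, so the order-$p$ element is irreducible and conjugate into a nonsplit torus $\cong\mathbb{F}_{q^2}^\times$ of order $q^2-1$; diagonalising over $\mathbb{F}_{q^2}$ and reading off the companion form over $\mathbb{F}_q$ yields $\beta_7$.

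The main obstacle will be the $\mathrm{GL}_2(\mathbb{F}_q)$ analysis: verifying that $\beta_4,\beta_5,\beta_6,\beta_7$ are exhaustive and pairwise disjoint, and in particular the irreducible case, where one must pass to $\mathbb{F}_{q^2}$, confirm that the relevant conjugacy class contains elements of order $p$ precisely when $p\mid(q+1)$, and then convert the abstract linear action into the explicit relations defining $\beta_7$. For the final assertion I would take the largest commutative normal subgroup to be $Q$ itself in every nonabelian case: $Q$ is commutative and normal, and if $N$ were a commutative normal subgroup with $N\not\subseteq Q$ then, as $[G:Q]=p$ is prime, $NQ=G$ and $N$ would contain an order-$p$ element $x$ acting on $Q$ as the nontrivial automorphism $\phi(x)$; commutativity of $N$ forces $N\cap Q\subseteq C_Q(x)$ while normality forces $[Q,x]\subseteq N\cap Q$, whence $[Q,x]\subseteq C_Q(x)$. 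But a coprime order-$p$ automorphism of $Q$ gives the Fitting decomposition $Q=C_Q(x)\oplus[Q,x]$, so $[Q,x]\subseteq C_Q(x)$ would force $[Q,x]=1$ and $\phi$ trivial, contradicting nonabelianity; hence every commutative normal subgroup lies in $Q$, and the explicit description of this largest subgroup is recorded in the next proposition.
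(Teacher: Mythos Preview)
Your argument is correct and complete. The paper does not prove this lemma at all: it simply records the classification as ``the result in \cite[Section 4]{CG}'' (Cole--Glover, 1893) and gives no argument, not even for the ``moreover'' clause about the largest abelian normal subgroup. So there is nothing to compare against; your proof supplies what the paper omits.

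A couple of minor comments. First, you correctly note that for $\beta_3$ one actually needs $p\mid(q-1)$ (since $|(\mathbb{Z}/q^2)^\times|=q(q-1)$), which is strictly stronger than the paper's stated hypothesis $p\mid(q^2-1)$; the existence of the required $m$ forces the stronger condition anyway, so this is a harmless looseness in the lemma's phrasing rather than an error in your proof. Second, your proof of the ``moreover'' clause via the Fitting decomposition $Q=C_Q(x)\oplus[Q,x]$ for the coprime automorphism is clean and is exactly what is needed; the paper never states this argument and only implicitly identifies the largest abelian normal subgroup as the Sylow $q$-subgroup through the explicit $K$'s appearing in the subsequent proposition.
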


\begin{proposition}\label{pro3.1.y}
We have
\begin{itemize}
  \item[(1)] All the quasitriangular structures on $\beta_3$ are given by $R=\sum_{g,h\in K}w(g,h)e_g \otimes e_h,$ where $w$ is a bicharacter on $K=\langle s\rangle$ such that $w(s^m,s^m)=w(s,s)$;
  \item[(2)]  All the quasitriangular structures on $\beta_4$ are given by
  $R=\sum_{g,h\in K}w(g,h)e_g \otimes e_h,$ where $w$ is a bicharacter on $K=\langle s,t\rangle$ such that
  $$w(s,t^m)=w(s,t),\;w(t^m,s)=w(t,s),\;w(t^m,t^m)=w(t,t);$$
  \item[(3)] All the quasitriangular structures on $\beta_5$ are given by
  $R=\sum_{g,h\in K}w(g,h)e_g \otimes e_h,$ where $w$ is a bicharacter on $K=\langle s,t\rangle$ such that
  $$w(s^m,s^m)=w(s,s),\;w(s^m,t^m)=w(s,t),$$
  $$\;w(t^m,s^m)=w(t,s),\;w(t^m,t^m)=w(t,t);$$
  \item[(4)] All the quasitriangular structures on $\beta_6$ are given by
  $R=\sum_{g,h\in K}w(g,h)e_g \otimes e_h,$ where $w$ is a bicharacter on $K=\langle s,t\rangle$ such that
  $$w(s^m,s^m)=w(s,s),\;w(s^m,t^n)=w(s,t),$$
  $$\;w(t^n,s^m)=w(t,s),\;w(t^n,t^n)=w(t,t);$$
  \item[(5)] All the quasitriangular structures on $\beta_7$ are given by
  $R=\sum_{g,h\in K}w(g,h)e_g \otimes e_h,$ where $w$ is a bicharacter on $K=\langle s,t\rangle$ such that
  $$w(t,t)=w(s,s),\;w(t,s^mt^n)=w(s,t),$$
  $$\;w(s^mt^n,t)=w(t,s),\;w(s^mt^n,s^mt^n)=w(t,t);$$
\end{itemize}
\end{proposition}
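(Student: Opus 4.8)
The plan is to reduce, for each non-abelian group $\beta_i$ with $3\le i\le 7$, the determination of all quasitriangular structures on $\Bbbk[\beta_i]$ to a single application of Lemma \ref{lem3.y}, and then to read off the invariance conditions case by case. First I fix the relevant subgroup $K$. In every case $|\beta_i|=pq^2$ with $q>p$, so a Sylow $q$-subgroup has order $q^2$ and is therefore abelian; since the number of such subgroups is $\equiv 1 \pmod q$ and divides $p<q$, it is unique and hence normal. Inspecting the presentations in Lemma \ref{lem3.z}, this subgroup is $K=\langle s\rangle$ for $\beta_3$ and $K=\langle s,t\rangle$ for $\beta_4,\dots,\beta_7$, and by the last assertion of Lemma \ref{lem3.z} it is the largest commutative normal subgroup of $\beta_i$.

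The key step is to show that every quasitriangular structure $R$ on $\Bbbk[\beta_i]$ already has the shape assumed in Lemma \ref{lem3.y} for this $K$. Since $H=\Bbbk[\beta_i]$ is cocommutative, $H^{*cop}=\Bbbk^{\beta_i}$ is commutative, so the image $H_l=l_R(H^{*cop})$ is a commutative Hopf subalgebra of $\Bbbk[\beta_i]$ and therefore equals $\Bbbk[A]$ for some abelian subgroup $A$. The minimal quasitriangular Hopf subalgebra $H_R$ is a quotient of $D(H_l)=D(\Bbbk[A])$, which for abelian $A$ is the group algebra $\Bbbk[A\times\widehat{A}]$; a Hopf-algebra quotient of the latter is again commutative and cocommutative, hence the group algebra of a finite abelian group, so $H_R=\Bbbk[N]$ for some abelian subgroup $N\le\beta_i$ and $R\in\Bbbk[N]\otimes\Bbbk[N]$. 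Moreover, the braiding axiom $\Delta^{op}(x)R=R\Delta(x)$ together with cocommutativity ($\Delta^{op}=\Delta$) forces $R$ to commute with every $g\otimes g$; applying $(f\otimes\id)$ to the resulting identity $R=(g\otimes g)R(g^{-1}\otimes g^{-1})$ shows $gH_lg^{-1}=H_l$, and the analogous statement for $H_r$ gives that $N$ is normal in $\beta_i$. By the maximality of $K$ we get $N\subseteq K$, hence $R\in\Bbbk[K]\otimes\Bbbk[K]$, and $(\Bbbk[K],R)$ is itself quasitriangular; as $\Bbbk[K]$ is the group algebra of a finite abelian group, $R$ must be a bicharacter in the idempotent basis, i.e. $R=\sum_{k,k'\in K}w(k,k')\,e_k\otimes e_{k'}$ with $w$ a bicharacter on $K$.

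With $R$ in this form, Lemma \ref{lem3.y} tells us that $R$ is a quasitriangular structure on $\Bbbk[\beta_i]$ if and only if $w(k,k')=w(\varphi_g(k),\varphi_g(k'))$ for all $g\in\beta_i$ and $k,k'\in K$. Because $K$ is abelian, $\varphi_g$ is trivial for $g\in K$, and each $\beta_i$ is generated by $K$ together with the single element $t$ (for $\beta_3$) or $u$ (for $\beta_4,\dots,\beta_7$); thus it suffices to impose invariance under conjugation by that one generator. Substituting the conjugation relations read off the presentations---$tst^{-1}=s^m$ for $\beta_3$, then $usu^{-1}=s,\ utu^{-1}=t^m$ for $\beta_4$, and so on up to $usu^{-1}=t,\ utu^{-1}=s^mt^n$ for $\beta_7$---into $w(k,k')=w(\varphi_g(k),\varphi_g(k'))$ with $k,k'$ running over the generators of $K$ yields exactly the systems of equations listed in (1)--(5); conversely every bicharacter satisfying them is admissible by the ``if'' direction of Lemma \ref{lem3.y}. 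The main obstacle is the structural reduction of the previous paragraph, which pins down the support of $R$ inside the largest commutative normal subgroup; once that is in place the remaining per-case verifications are routine conjugation computations.
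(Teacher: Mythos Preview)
Your proof is correct and follows essentially the same route as the paper: reduce $R$ to lie in $\Bbbk[K]\otimes\Bbbk[K]$ for the largest abelian normal subgroup $K$, then apply Lemma~\ref{lem3.y} and read off the invariance conditions generator by generator. The only difference is that the paper obtains the reduction $R\in\Bbbk[K]\otimes\Bbbk[K]$ in one line by citing Davydov's \cite[Theorem~3]{AD}, whereas you re-derive the needed special case directly from the commutativity of $H_l$, the structure of $D(\Bbbk[A])$ for abelian $A$, and the centrality of $R$ under $\Delta(g)$.
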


\begin{proof}
By Lemma \ref{lem3.z}, we can assume $K$ is the largest non-trivial commutative normal subgroup of $G$.
Assume $R$ is a quasitriangular structures on $\Bbbk G$, then $H_R\in \Bbbk [K]\otimes \Bbbk [K]$ by \cite[Theorem 3]{AD}.
Due to Lemma \ref{lem3.y}, we get what we want.
\end{proof}

Assume $q<p$, then the following lemma is the result in \cite[Section 5]{CG}. And we also keep the notation in \cite[Section 5]{CG} as much as possible.
\begin{lemma}\label{lem3.w}
Let $G$ be a group of order $pq^2$($q<p$). Then $G$ is isomorphic to the following cases
\begin{itemize}
  \item[(1)] $\gamma_1=\mathbb{Z}_{pq^2}$;
  \item[(2)] $\gamma_2=\mathbb{Z}_{p}\bigoplus\mathbb{Z}_{q}\bigoplus\mathbb{Z}_{q}$;
  \item[(3)] $q\mid (p-1)$ and $\gamma_3=\langle s,t|\;s^{p}=t^{q^2}=1,tst^{-1}=s^m \rangle$, $m\in \mathbb{N}$ satisfying $m^q \equiv 1(\emph{mod}\;p)$ and $m \not \equiv 1(\emph{mod}\;p)$;
  \item[(4)] $q^2\mid (p-1)$ and $\gamma_4=\langle s,t|\;s^{p}=t^{q^2}=1,tst^{-1}=s^m \rangle$, $m\in \mathbb{N}$ satisfying $m^{q^2} \equiv 1(\emph{mod}\;p)$ and $m^q \not \equiv 1(\emph{mod}\;p)$;
  \item[(5)] $q\mid (p-1)$ and $\gamma_5=\langle s,t,u|\;s^{p}=t^q=u^q=1,tut^{-1}=u,tst^{-1}=s,usu^{-1}=s^m\rangle$, $m\in \mathbb{N}$ satisfying $m^q \equiv 1(\emph{mod}\;p)$ and $m \not \equiv 1(\emph{mod}\;p)$;
  \item[(6)] $q\mid (p-1)$ and $\gamma_6=\langle s,t,u|\;s^{p}=t^q=u^q=1,tut^{-1}=u,tst^{-1}=s^m,usu^{-1}=s^n\rangle$, $m,n\in \mathbb{N}$ satisfying $m^q \equiv n^q \equiv 1(\emph{mod}\;p)$ and $m \not \equiv 1(\emph{mod}\;p),n \not \equiv 1(\emph{mod}\;p)$ and   $m \not \equiv n(\emph{mod}\;p)$;
\end{itemize}
Moreover, if $G$ is not abelian, i.e. $G\in \{\gamma_i|\;1\leq i \leq 6,i\neq 1,2\}$, then $G$ has a largest non-trivial commutative and normal subgroup.
\end{lemma}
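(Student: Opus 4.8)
The plan is to combine elementary Sylow theory with the cyclicity of $\operatorname{Aut}(\mathbb{Z}_p)$ to recover the list, and then to produce the largest commutative normal subgroup explicitly as a centralizer; the enumeration itself I would quote from \cite[Section~5]{CG}, so the genuine work lies in the ``moreover'' clause.

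First I would fix the overall shape of $G$. Since $|G|=pq^2$ with $q<p$, the number $n_p$ of Sylow $p$-subgroups divides $q^2$ and satisfies $n_p\equiv 1\pmod p$; because $0<q-1<q+1<p$ (using $q<p$ together with the parity of odd primes to exclude $q+1=p$), we have $p\nmid(q-1)(q+1)=q^2-1$, so neither $q$ nor $q^2$ is congruent to $1$ modulo $p$ and hence $n_p=1$. Thus the Sylow $p$-subgroup $P\cong\mathbb{Z}_p$ is normal, and choosing a Sylow $q$-subgroup $Q$ (of order $q^2$, hence abelian, so $Q\cong\mathbb{Z}_{q^2}$ or $Q\cong\mathbb{Z}_q\times\mathbb{Z}_q$) yields $G=P\rtimes_\phi Q$ for an action $\phi\colon Q\to\operatorname{Aut}(P)\cong\mathbb{Z}_{p-1}$. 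As the target is cyclic, $\operatorname{im}\phi$ is cyclic, so running through the possibilities for $Q$ and for $|\operatorname{im}\phi|\in\{1,q,q^2\}$ (the last two forcing $q\mid p-1$ and $q^2\mid p-1$ respectively) reproduces the presentations $\gamma_1,\dots,\gamma_6$. Sorting these semidirect products into distinct isomorphism types --- the one genuinely fiddly point being the two actions with $Q\cong\mathbb{Z}_q\times\mathbb{Z}_q$ and $|\operatorname{im}\phi|=q$ that produce $\gamma_5,\gamma_6$ --- is exactly the bookkeeping of \cite[Section~5]{CG}, which I would cite rather than repeat.

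For the ``moreover'' clause I would set $K:=C_G(P)$ and claim this is the desired subgroup. Writing each element uniquely as $s^aw$ with $w\in Q$, a direct computation gives $s^aw\,s\,(s^aw)^{-1}=\phi(w)(s)$, so $s^aw$ centralizes the generator $s$ of $P$ exactly when $\phi(w)=\operatorname{Id}$; therefore
\[
K=C_G(P)=P\cdot\ker\phi=P\times\ker\phi ,
\]
which is abelian (a product of two abelian groups that commute elementwise) and normal (the centralizer of a normal subgroup is normal). In the nonabelian cases this gives $K\cong\mathbb{Z}_{pq}$ whenever $\ker\phi\ne 1$ (namely $\gamma_3,\gamma_5,\gamma_6$) and $K=P\cong\mathbb{Z}_p$ when $\phi$ is faithful ($\gamma_4$), matching the subgroups $\langle s,t^q\rangle$, $\langle s,t\rangle$ and $\langle s\rangle$ read off from the presentations.

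It remains to show that $K$ contains every nontrivial commutative normal subgroup $N$, which is the heart of the statement. Here $N\cap P$ is a normal subgroup of $G$ contained in $P\cong\mathbb{Z}_p$, hence equals $P$ or $1$. If $P\subseteq N$, then, $N$ being abelian, it centralizes $P$, so $N\subseteq K$. If $N\cap P=1$, then $N\cong NP/P\le Q$ is a $q$-group, and since $N$ and $P$ are both normal we get $[N,P]\subseteq N\cap P=1$, so again $N$ centralizes $P$ and $N\subseteq K$. As $K$ is itself commutative and normal, it is the unique largest such subgroup. The only step that is not completely routine is separating the elementary-abelian-$Q$ actions into isomorphism types, which is why I would lean on \cite{CG} for the enumeration; by contrast the existence of the largest commutative normal subgroup falls out uniformly from the commutator identity $[N,P]\subseteq N\cap P$ for normal $N,P$, with no case analysis beyond the dichotomy $N\cap P\in\{1,P\}$.
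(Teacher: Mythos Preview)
The paper does not prove this lemma: it attributes the enumeration to \cite[Section~5]{CG} and states the ``moreover'' clause without justification. Your proposal therefore supplies what the paper omits, and the argument is correct. Your Sylow reduction to $G=P\rtimes_\phi Q$ (using $p\nmid q^2-1$ for odd primes $q<p$) and your deferral to \cite{CG} for the isomorphism sorting coincide with the paper's treatment; for the ``moreover'' clause, your identification $K=C_G(P)=P\times\ker\phi$ together with the dichotomy on $N\cap P$ and the inclusion $[N,P]\subseteq N\cap P$ for normal $N,P$ gives a clean, uniform proof that the paper does not provide at all.

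One small point about your parenthetical list of explicit subgroups: you name three subgroups for four nonabelian types. Your own formula correctly gives $|K|=pq$ for $\gamma_6$ as well, since $\operatorname{im}\phi$ must lie in the unique order-$q$ subgroup of the cyclic group $\operatorname{Aut}(P)$ and hence $|\ker\phi|=q$; explicitly $K=\langle s,\,t^{a}u^{b}\rangle$ for the pair with $m^{a}n^{b}\equiv 1\pmod p$. This is strictly larger than the $K=\langle s\rangle$ that the paper records for $\gamma_6$ in Proposition~\ref{pro3.1.z}, so your centralizer description is in fact the correct largest commutative normal subgroup there and incidentally sharpens that later statement.
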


\begin{proposition}\label{pro3.1.z}
We have
\begin{itemize}
  \item[(1)] All the quasitriangular structures on $\gamma_3$ are given by $R=\sum_{g,h\in K}w(g,h)e_g \otimes e_h,$ where $w$ is a bicharacter on $K=\langle s,t^q\rangle$ such that
  $$w(s^m,s^m)=w(s,s),\;w(s^m,t^q)=w(s,t^q),\;w(t^q,s^m)=w(t^q,s);$$
  \item[(2)]  All the quasitriangular structures on $\gamma_4$ are given by
  $R=\sum_{g,h\in K}w(g,h)e_g \otimes e_h,$ where $w$ is a bicharacter on $K=\langle s\rangle$ such that
  $w(s^m,s^m)=w(s,s)$;
  \item[(3)] All the quasitriangular structures on $\gamma_5$ are given by
  $R=\sum_{g,h\in K}w(g,h)e_g \otimes e_h,$ where $w$ is a bicharacter on $K=\langle s,t\rangle$ such that
  $$w(s^m,s^m)=w(s,s),\;w(s^m,t)=w(s,t),\;w(t,s^m)=w(t,s);$$
  \item[(4)] All the quasitriangular structures on $\gamma_6$ are given by
  $R=\sum_{g,h\in K}w(g,h)e_g \otimes e_h,$ where $w$ is a bicharacter on $K=\langle s\rangle$ such that
  $$w(s^m,s^m)=w(s,s),\;w(s^n,s^n)=w(s,s).$$
\end{itemize}
\end{proposition}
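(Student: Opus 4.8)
The plan is to mirror the proof of Proposition \ref{pro3.1.y} verbatim, since the two statements differ only in the list of groups involved. The two tools are the same: first, \cite[Theorem 3]{AD}, which guarantees that for any quasitriangular structure $R$ on a group algebra $\Bbbk G$ the associated minimal quasitriangular Hopf subalgebra satisfies $H_R\in\Bbbk[K]\otimes\Bbbk[K]$, where $K$ is the largest non-trivial commutative normal subgroup of $G$; and second, Lemma \ref{lem3.y}, which says that an element $R=\sum_{k,k'\in K}w(k,k')\,e_k\otimes e_{k'}$ built from a bicharacter $w$ on such a $K$ is a quasitriangular structure on $\Bbbk G$ precisely when $w(k,k')=w(\varphi_g(k),\varphi_g(k'))$ for all $k,k'\in K$ and all $g\in G$, i.e. when $w$ is invariant under the conjugation action of $G$ on $K$.

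By Lemma \ref{lem3.w}, each of the non-abelian groups $\gamma_3,\gamma_4,\gamma_5,\gamma_6$ possesses a largest non-trivial commutative normal subgroup $K$, and \cite[Theorem 3]{AD} forces $R\in\Bbbk[K]\otimes\Bbbk[K]$. Hence every quasitriangular structure has the displayed shape $R=\sum_{g,h\in K}w(g,h)e_g\otimes e_h$, and by Lemma \ref{lem3.y} the problem reduces entirely to listing the conjugation-invariance conditions on $w$. So the remaining work is purely a matter of (a) pinning down $K$ for each group, and (b) computing how the generators of $G$ act on $K$ by conjugation; the stated relations then read off directly.

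I would carry out the case analysis as follows. For $\gamma_3$ I take $K=\langle s,t^q\rangle\cong\mathbb{Z}_p\times\mathbb{Z}_q$: it is commutative because $m^q\equiv1\pmod p$ makes $t^q$ centralize $s$, and it is normal, the only non-trivial conjugation being $s\mapsto s^m$ (while $t^q$ is central), which yields exactly the three listed equations. For $\gamma_4$, since $m$ has order $q^2$ modulo $p$, the element $t^q$ no longer centralizes $s$, so the largest commutative normal subgroup shrinks to $K=\langle s\rangle\cong\mathbb{Z}_p$, on which $t$ acts by $s\mapsto s^m$, giving the single condition $w(s^m,s^m)=w(s,s)$. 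For $\gamma_5$ I take $K=\langle s,t\rangle\cong\mathbb{Z}_p\times\mathbb{Z}_q$ (here $t$ is central and $u$ conjugates $s\mapsto s^m$, $t\mapsto t$), producing the three conditions. For $\gamma_6$, both $t$ and $u$ act non-trivially on $s$ (by $s\mapsto s^m$ and $s\mapsto s^n$ respectively), so neither $\langle s,t\rangle$ nor $\langle s,u\rangle$ is commutative and $K=\langle s\rangle\cong\mathbb{Z}_p$; invariance under both actions gives $w(s^m,s^m)=w(s,s)$ and $w(s^n,s^n)=w(s,s)$.

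The main obstacle, modest as it is, lies entirely in step (a): correctly identifying $K$ as the largest commutative normal subgroup in each case, which amounts to checking that no subgroup strictly containing $K$ stays commutative once the defining relations and the arithmetic constraints on $m,n$ (the orders of $m,n$ modulo $p$) are taken into account. Lemma \ref{lem3.w} already assures us such a largest subgroup exists, so this step cannot fail; once $K$ is fixed, reading off the conjugation action and hence the invariance conditions on $w$ is routine, and the converse direction (that each such $w$ does give a quasitriangular structure) is immediate from Lemma \ref{lem3.y}.
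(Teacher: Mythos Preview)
Your proposal is correct and follows exactly the same approach as the paper, which simply writes ``Similar to the proof of Proposition \ref{pro3.1.y}.'' You have in fact supplied more detail than the paper does, explicitly identifying the largest commutative normal subgroup $K$ in each case and reading off the conjugation-invariance conditions from Lemma \ref{lem3.y}; this is precisely the intended argument.
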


\begin{proof}
Similar to the proof of Proposition \ref{pro3.1.y}.
\end{proof}

\subsection{Quasitriangular structures on $\mathscr{A}_l$.} At first, Natale (see \cite[Lemma 1.3.9]{Na2}) already proved that $\mathscr{A}_l$ are self dual for $0 \leq l \leq q-1$. Therefore, it is enough to consider the braiding structures on $\mathscr{A}_l$ instead of their quasitriangular structures. Secondly, we need point out that Gelaki already constructed the Hopf algebra $\mathscr{A}_1$ in \cite[Theorem 3.11]{Ge1} and showed $\mathscr{A}_1$ admits no quasitriangular structure. As the first step to our aim, we will show a more general result than Gelaki's, that is, we will show that all  $\mathscr{A}_l$ for $1\leq l\leq q-1$ admit no quasitriangular structures. To do this, we start with the following two lemmas.
\begin{lemma}\label{lem3.5}
With notions given in Example \ref{ex2.1.5}, the only braiding structure on $\Bbbk^G$ is trivial, i.e. $\langle e_g,e_h\rangle=\epsilon(e_g)\epsilon(e_h)$, where $g,h\in G$.
\end{lemma}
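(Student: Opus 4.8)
The claim is that the only braiding structure on $\Bbbk^G$, where $G=\mathbb{Z}_p\rtimes\mathbb{Z}_q$ as in Example \ref{ex2.1.5}, is the trivial one $\langle e_g,e_h\rangle=\epsilon(e_g)\epsilon(e_h)=\delta_{g,1}\delta_{h,1}$. A braiding structure on $\Bbbk^G$ is exactly a quasitriangular structure on $(\Bbbk^G)^\ast=\Bbbk G$, and quasitriangular structures on a group algebra $\Bbbk G$ are classified by pairs consisting of a commutative normal subgroup together with a bicharacter satisfying the conjugation-invariance of Lemma \ref{lem3.y}. So the plan is to dualize: rather than work directly with $\langle-,-\rangle$ on $\Bbbk^G$, I would translate the statement into the assertion that $\Bbbk G$, for $G=\mathbb{Z}_p\rtimes\mathbb{Z}_q$, admits no nontrivial quasitriangular structure, and then apply the machinery already developed in Lemma \ref{lem3.y} and Lemma \ref{lem3.z}.

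**The main line of argument.**
First I would identify the relevant commutative normal subgroups of $G=\langle a,b\mid a^p=b^q=1,\,bab^{-1}=a^t\rangle$ with $t^q\equiv 1$, $t\not\equiv 1\pmod p$. Since $t\not\equiv 1$, the subgroup $\langle a\rangle\cong\mathbb{Z}_p$ is normal and is in fact the unique (hence largest) nontrivial commutative normal subgroup of $G$: any normal subgroup containing an element of order $q$ would have to be $\langle a\rangle$-invariant and would force $t\equiv 1$, and the whole group is nonabelian. By \cite[Theorem 3]{AD} (as invoked in the proof of Proposition \ref{pro3.1.y}), for any quasitriangular structure $R$ on $\Bbbk G$ the image $H_R$ lies in $\Bbbk K\otimes\Bbbk K$ for this largest commutative normal $K=\langle a\rangle$, so $R=\sum_{i,j}w(a^i,a^j)\,e_{a^i}\otimes e_{a^j}$ for a bicharacter $w$ on $\mathbb{Z}_p$. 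Then Lemma \ref{lem3.y} imposes $w(k,k')=w(\varphi_b(k),\varphi_b(k'))$ where $\varphi_b(a)=bab^{-1}=a^t$; that is, $w(a,a)=w(a^t,a^t)=w(a,a)^{t^2}$.

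**Pinning down the bicharacter.**
The condition $w(a,a)=w(a,a)^{t^2}$ forces $w(a,a)^{t^2-1}=1$. Since $w(a,a)$ is a $p$-th root of unity (as $a$ has order $p$), I would argue that $t^2\not\equiv 1\pmod p$: indeed $t$ has multiplicative order exactly $q$ modulo $p$ (it satisfies $t^q\equiv 1$ with $q$ prime and $t\not\equiv 1$), and since $q$ is odd $t^2\equiv 1$ would force $t\equiv 1$, a contradiction. Hence $\gcd(t^2-1,p)=1$, so $w(a,a)=1$, and because $w$ is a bicharacter on the cyclic group $\mathbb{Z}_p$ generated by $a$, this forces $w\equiv 1$ and therefore $R=1\otimes 1$. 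Dualizing back, the corresponding braiding structure on $\Bbbk^G$ is the trivial one.

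**Anticipated obstacle.**
The routine computations (conjugation action, the bicharacter being determined by its value on the generator) are mechanical. The one genuine point requiring care is the order-theoretic step showing $t^2\not\equiv 1\pmod p$, i.e. that the constraint $t^q\equiv 1$, $t\not\equiv 1$ together with $q$ odd really rules out $t^2\equiv 1$; this is where the hypothesis that $q$ is an \emph{odd} prime is essential, and I would make sure to invoke it explicitly. A secondary subtlety is justifying that $\langle a\rangle$ is genuinely the \emph{largest} commutative normal subgroup so that \cite[Theorem 3]{AD} applies cleanly; this follows from the nonabelianness of $G$ and the structure of $\mathbb{Z}_p\rtimes\mathbb{Z}_q$, but should be stated rather than taken for granted.
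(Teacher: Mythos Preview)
Your proposal is correct and follows essentially the same route as the paper's proof: dualize to a quasitriangular structure on $\Bbbk G$, use \cite[Theorem 3]{AD} to force $R\in\Bbbk\langle a\rangle\otimes\Bbbk\langle a\rangle$, impose the conjugation-invariance condition (the paper does this directly via $\Delta^{\mathrm{cop}}(b)R=R\Delta(b)$, you via Lemma~\ref{lem3.y}), and then use the order argument $t^2\not\equiv 1\pmod p$ to kill the bicharacter. One small slip: in Lemma~\ref{lem3.y} the map $\varphi_b$ acts on idempotents, so $\varphi_b(a)=a^{t^{-1}}$ rather than $a^t$, but since $t^2\equiv 1\iff t^{-2}\equiv 1$ this does not affect the argument.
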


\begin{proof}
Assume $R$ is a quasitriangular structure on $\Bbbk G$. We will show $R=1\otimes 1$ and thus we complete the proof. By \cite[Theorem 3]{AD}, there are commutative and normal subgroups $H,K\subseteq G$ such that $R\in \Bbbk H\otimes \Bbbk K$. Since $G=\langle a,b\;|a^p=b^q=1,bab^{-1}=a^t\rangle$, the only non-trivial commutative and normal subgroup of $G$ is $\mathbb{Z}_p=\langle a\rangle$. Thus $R\in \Bbbk \mathbb{Z}_p \otimes \Bbbk \mathbb{Z}_p$.

Let $\omega$ be a primitive $p$th root of 1. If we define $e_{a^i}=p^{-1}\sum_{j=1}^p \omega^{ij}a^j$, $0\leq i \leq p-1$, then $\{e_{a^i}\}_{i=0}^{p-1}$ is a basis of orthogonal idempotents of $\Bbbk \mathbb{Z}_p$. Moreover, we have $be_{a^i}b^{-1}=e_{a^{it'}}$ for $0\leq i \leq (p-1)$, where $t':=t^{q-1}$. Since $R\in \Bbbk \mathbb{Z}_p \otimes \Bbbk \mathbb{Z}_p$, we can write $R=\sum_{i,j=0}^{p-1}w(a^i,a^j)e_{a^i}\otimes e_{a^j}$, where $w$ is a bicharacter on $\mathbb{Z}_p$. Because $\Delta^{cop}(b)R=R\Delta(b)$, we get $w(a^i,a^j)=w(a^{it},a^{jt})$, $0\leq i,j \leq p-1$. In particular, $w(a,a)=w(a^t,a^t)$. By assumption, $t^q\equiv 1(\text{mod}\;p)$ and $t\not \equiv 1(\text{mod}\;p)$, $q>2$, thus $t^2\not \equiv 1(\text{mod}\;p)$. And this implies $w(a,a)=1$. Since $w$ is bicharacter on $\mathbb{Z}_p$, we have $R=1\otimes 1$.
\end{proof}


\begin{lemma}\label{lem3.6}
With notions given in Example \ref{ex2.1.5} and assume $\langle , \rangle$ is a braiding structure on $\mathscr{A}_l$  for $0 \leq l \leq q-1$. Then there are $g_0,g_1\in G$ such that
\begin{itemize}
             \item[(i)] $\langle e_h, g^i\rangle=\delta_{h,g_0^i}$ and $\langle g^i,e_h\rangle=\delta_{h,g_1^i}$, $0\leq i \leq q$ and $h\in G$;
              \item[(ii)] $\phi^l(g_0)=\phi^l(g_1)$;
              \item[(iii)] $h\triangleleft g=g_0hg_0^{-1}$ and $h\triangleleft g=g_1^{-1}hg_1$, $h\in G$.
\end{itemize}
\end{lemma}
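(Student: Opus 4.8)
The plan is to run everything through the observation that, for a group-like element $c$, the partial braidings $a\mapsto\langle a,c\rangle$ and $a\mapsto\langle c,a\rangle$ are \emph{algebra characters} of $\mathscr{A}_l$, and then to restrict these characters to the commutative subalgebra $\Bbbk^G$. First I would fix notation: write $g^i$ for $(\sum_{u\in G}e_u)\#g^i$, which is group-like because $\triangleright$ is trivial and $\tau\equiv1$ (Example \ref{ex2.1.5}), and note that $\Bbbk^G=\langle e_h\#1\mid h\in G\rangle$ is commutative with orthogonal idempotents $e_h\#1$. Taking $c=g^i$ in $\langle ab,c\rangle=\langle a,c_{(1)}\rangle\langle b,c_{(2)}\rangle$ makes $a\mapsto\langle a,g^i\rangle$ multiplicative, and $\langle 1,g^i\rangle=\epsilon(g^i)=1$ makes it unital; the other slot is symmetric via $\langle a,bc\rangle=\langle a_{(1)},c\rangle\langle a_{(2)},b\rangle$. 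Since a unital algebra map restricted to $\Bbbk^G$ must be evaluation at one group element, defining $g_0,g_1$ by $\langle e_{g_0},g\rangle=1$ and $\langle g,e_{g_1}\rangle=1$ gives $\langle e_h,g\rangle=\delta_{h,g_0}$ and $\langle g,e_h\rangle=\delta_{h,g_1}$. Because $\langle-,g^i\rangle$ is the $i$-fold convolution power of $\langle-,g\rangle$ and passing to $\Bbbk^G$ turns convolution of characters into multiplication in $G$ (using $\Delta(e_h\#1)=\sum_{h'h''=h}e_{h'}\#1\otimes e_{h''}\#1$), this upgrades to $\langle e_h,g^i\rangle=\delta_{h,g_0^i}$ and $\langle g^i,e_h\rangle=\delta_{h,g_1^i}$, which is (i).

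For (iii) I would feed the pair $a=e_h\#1$, $b=g$ into the quasi-cocommutativity axiom $\langle a_{(1)},b_{(1)}\rangle a_{(2)}b_{(2)}=\langle a_{(2)},b_{(2)}\rangle b_{(1)}a_{(1)}$. Using Definition \ref{def2.1.2} one computes $(e_u\#1)(1\#g)=e_u\#g$ and $(1\#g)(e_u\#1)=e_{u\triangleleft g^{-1}}\#g$ (both $\sigma$-values being $1$), so with (i) the two sides collapse to $e_{g_0^{-1}h}\#g$ and $e_{(hg_0^{-1})\triangleleft g^{-1}}\#g$ respectively. Equating, applying the automorphism $\triangleleft g$ (whose inverse is $\triangleleft g^{-1}$), and reindexing $h\mapsto g_0h$ gives $h\triangleleft g=g_0hg_0^{-1}$. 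The analogous computation with $a=g$, $b=e_h\#1$ (now using the $g_1$-values and reindexing $h\mapsto hg_1$) gives $h\triangleleft g=g_1^{-1}hg_1$.

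Part (ii) is the crux, and it is where the cocycle $\sigma$ must enter. The key is the $\sigma$-twisted power identity $(1\#g)^q=\sum_{u\in G}\phi^l(u)\,e_u\#1=:P$. Indeed $(e_u\#g^k)(1\#g)=\sigma(u,g^k,g)\,e_u\#g^{k+1}$ with $\sigma(u,g^k,g)=\phi^{l\,q_{k,1}}(u)$, and the carry $q_{k,1}$ vanishes for $1\le k\le q-2$ and equals $1$ only at $k=q-1$; an induction then yields $P$, which is again group-like since $\phi^l$ is a character of $G$. Evaluating the character $\langle-,g\rangle$ of (i) on $g^q=P$ gives, by multiplicativity, $\langle g,g\rangle^q$, while direct expansion gives $\langle P,g\rangle=\sum_u\phi^l(u)\langle e_u,g\rangle=\phi^l(g_0)$; hence $\phi^l(g_0)=\langle g,g\rangle^q$. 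Running the character $\langle g,-\rangle$ through the same element gives $\langle g,g\rangle^q=\phi^l(g_1)$, and comparing the two produces $\phi^l(g_0)=\phi^l(g_1)$, which is (ii).

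The hard part will be locating the twisted-power identity $(1\#g)^q=\sum_u\phi^l(u)e_u$ and recognising that pushing it through the two characters of (i) is precisely what converts the otherwise invisible $2$-cocycle $\sigma$ into a numerical constraint on $g_0,g_1$; everything else is bookkeeping with Definition \ref{def2.1.2} and the coquasitriangular axioms. A minor point needing care is the handling of $\mathrm{op}$-conventions when I argue that the partial braidings give genuine characters and that the $i$-th power corresponds to $g_0^i$, $g_1^i$, but since only powers of the single group-like $g$ and the single idempotent family $\{e_h\#1\}$ occur, these all commute and the conventions are immaterial.
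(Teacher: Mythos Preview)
Your proof is correct and follows essentially the same route as the paper's: both identify $g_0,g_1$ as the evaluation points of the algebra characters $\langle\cdot,g\rangle$, $\langle g,\cdot\rangle$ restricted to $\Bbbk^G$, extract (iii) from the quasi-cocommutativity axiom applied to $e_h$ and $g$, and obtain (ii) by computing $\langle g,g\rangle^q$ two ways against the $q$-th power of $g$ in $\mathscr{A}_l$. The only differences are cosmetic: for (i) the paper runs a recurrence $\langle e_h,g^{i+1}\rangle=\langle e_{hg_0^{-1}},g^i\rangle$ that invokes Lemma~\ref{lem3.5} to kill the cross-terms $\langle e_h,e_kg^{i+1}\rangle$, whereas your convolution argument bypasses this for $0\le i\le q-1$ (the boundary case $i=q$, equivalently $g_0^q=1$, is not covered by your convolution identity since $(1\#g)^q\neq 1\#g^q$ when $l\neq0$, but it follows at once from your (iii), which forces $g_0\in\langle b\rangle$, or from Lemma~\ref{lem3.5}); and for (ii) the paper iterates $\langle g^{i+1},g\rangle=\lambda\,\sigma(g_0,g,g^i)^{-1}\langle g^i,g\rangle$ rather than first computing $(1\#g)^q=\sum_u\phi^l(u)e_u$ in closed form.
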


\begin{proof}
Since $\langle ,\rangle$ is a braiding structure and $g$ is a grouplike element in $\mathscr{A}_l$, we know $\langle ,g\rangle:\mathscr{A}_l\rightarrow \Bbbk$ is an algebra map. Thus the restriction $\langle ,g\rangle|_{\Bbbk^G}$ is an algebra map, i.e a character of $\Bbbk^G$. Let $g_0\in G$ such that $\langle e_h,g\rangle=\delta_{h,g_0}$. By definition, $gg^{i}=\sum_{k\in G}\sigma(k,g,g^{i})e_k g^{i+1}$. Thus $\langle e_h,gg^{i}\rangle=\sum_{k\in G}\sigma(k,g,g^{i})\langle e_h,e_k g^{i+1}\rangle$ for $i\geq 0$. Since $\langle ,\rangle$ is a braiding structure, we have $\langle e_h,gg^{i}\rangle=\sum_{rs=h}\langle e_r,g^{i}\rangle \langle e_s,g\rangle=\langle e_{hg_0^{-1}},g^{i}\rangle$. Similarly, we have $\langle e_h,e_k g^{i+1}\rangle=\sum_{rs=h}\langle e_r,g^{i+1}\rangle \langle e_s,e_k\rangle$. By Lemma \ref{lem3.5}, the restriction $\langle ,\rangle|_{\Bbbk^G}$ is trivial. Thus $\langle e_h,e_k g^{i+1}\rangle=\delta_{k,1}\langle e_h,g^{i+1}\rangle$. This implies $\sum_{k\in G}\sigma(k,g,g^{i})\langle e_h,e_k g^{i+1}\rangle=\langle e_h,g^{i+1}\rangle$. Therefore $\langle e_{hg_0^{-1}},g^{i}\rangle=\langle e_h,g^{i+1}\rangle$. By induction way, we have $\langle e_h, g^i\rangle=\delta_{h,g_0^i}$, $0\leq i \leq q$. Similarly, we can find $g_1\in G$ such that $\langle g^i,e_h\rangle=\delta_{h,g_1^i}$, $0\leq i \leq q$. Thus we have (i).

By definition, we have $gg^{i}=\sum_{k\in G}\sigma(k,g,g^{i})e_k g^{i+1}$ for $i\geq 0$. Thus we get $\langle gg^{i},g\rangle =\sum_{k\in G}\sigma(k,g,g^{i})\langle e_kg^{i+1},g\rangle$. Let $\langle g,g\rangle:=\lambda$. Because $\langle ,g\rangle$ is an algebra map, $\langle gg^{i},g\rangle=\lambda \langle g^{i},g\rangle$ and $\langle e_kg^{i+1},g\rangle=\delta_{k,g_0}\langle g^{i+1},g\rangle$. Thus $\langle g^{i+1},g\rangle=\lambda \sigma(g_0,g,g^{i})^{-1} \langle g^{i},g\rangle $. By induction way, we have $\langle g^{q},g\rangle=\lambda^{q}\prod_{i=1}^{q-1}\sigma(g_0,g,g^i)^{-1}$. By definition, we have $\sigma(g_0,g,g^i)=1$ for $1\leq i \leq q-2$ and $\sigma(g_0,g,g^{q-1})=\phi^l(g_0)$. Thus $\langle g^{q},g\rangle=\lambda^{q}\phi^l(g_0)^{-1}$. By definition, $g^q=1$. Therefore $\lambda^{q}=\phi^l(g_0)$. Similarly, we have $\lambda^{q}=\phi^l(g_1)$. And hence we have (ii).

Because $\langle ,\rangle$ is a braiding structure, we get $\sum_{rs=h}\langle e_r,g\rangle e_sg=\sum_{rs=h}ge_r\langle e_s,g\rangle$. Since $\langle e_r,g\rangle=\delta_{r,g_0}$ and $\langle e_s,g\rangle=\delta_{s,g_0}$, we have $e_{g_0^{-1}h}g=ge_{hg_0^{-1}}$. By definition, $e_{g_0^{-1}h}g=ge_{(g_0^{-1}h)\triangleleft g}$. Thus $ge_{(g_0^{-1}h)\triangleleft g}=ge_{hg_0^{-1}}$. Because $gg^{q-1}=\sum_{k\in G}\sigma(k,g,g^{q-1})e_k$, we know $g$ is invertible. Hence $(g_0^{-1}h)\triangleleft g=hg_0^{-1}$. In particular, $g_0^{-1}\triangleleft g=g_0^{-1}$ by letting $h=1$. And hence $h\triangleleft g=g_0hg_0^{-1}$. Similarly, we have $\sum_{rs=h}\langle g,e_r\rangle ge_s=\sum_{rs=h}e_r g\langle g,e_s\rangle$. Since $\langle g,e_r\rangle=\delta_{r,g_1}$ and $\langle g,e_s\rangle=\delta_{s,g_1}$, we have $ge_{g_1^{-1}h}=e_{hg_1^{-1}}g$. Because $e_{hg_1^{-1}}g=ge_{(hg_1^{-1})\triangleleft g}$ and $g$ is invertible, we have $ (hg_1^{-1})\triangleleft g=g_1^{-1}h$. Thus $g_1^{-1}\triangleleft g=g_1^{-1}$ by letting $h=1$. And hence we get $h\triangleleft g=g_1^{-1}hg_1$.
\end{proof}

Now we can use Lemmas \ref{lem3.5}-\ref{lem3.6} to get the following promised result.

\begin{proposition}\label{pro3.7}
The Hopf algebras $\mathscr{A}_l$, $1 \leq l \leq q-1$ admit no quasitriangular structure.
\end{proposition}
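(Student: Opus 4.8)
The plan is to exploit the self-duality of $\mathscr{A}_l$: since a braiding structure on $\mathscr{A}_l$ corresponds to a quasitriangular structure on $\mathscr{A}_l^\ast \cong \mathscr{A}_l$, it suffices to show that $\mathscr{A}_l$ carries no braiding structure at all for $1 \le l \le q-1$. So I would argue by contradiction: suppose $\langle,\rangle$ is a braiding structure on $\mathscr{A}_l$. Then Lemma \ref{lem3.6} applies and produces elements $g_0, g_1 \in G$ together with the three constraints (i)--(iii). The entire argument then consists of pinning down $g_0$ and $g_1$ from (iii) and reading off a numerical obstruction from (ii).

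First I would determine $g_0$ and $g_1$ explicitly. Since the action $\triangleright$ is trivial, the matched-pair identity $(gg')\triangleleft f = (g\triangleleft f)(g'\triangleleft f)$ shows that $h \mapsto h \triangleleft g$ is a group automorphism of $G$; by the definition of $\triangleleft$ in Example \ref{ex2.1.5} it sends $a \mapsto a^t$ and $b \mapsto b$, which is precisely conjugation by $b$ (indeed $bab^{-1} = a^t$ and $bbb^{-1} = b$). Hence condition (iii) of Lemma \ref{lem3.6} says that conjugation by $g_0$ and conjugation by $g_1^{-1}$ both coincide with conjugation by $b$. At this point I would record the routine fact that the center $Z(G)$ is trivial: if $a^i b^j$ commutes with $b$ then $p \mid i(t-1)$, so $i = 0$ because $t \not\equiv 1 \pmod p$, and if it also commutes with $a$ then $t^j \equiv 1 \pmod p$, forcing $q \mid j$ and hence $j = 0$ since $t$ has order $q$ modulo $p$. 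Therefore $b^{-1}g_0$ and $b^{-1}g_1^{-1}$ lie in $Z(G) = \{1\}$, giving $g_0 = b$ and $g_1 = b^{-1}$.

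The final step, which is where the hypothesis $1 \le l \le q-1$ enters decisively, is to feed this into condition (ii), namely $\phi^l(g_0) = \phi^l(g_1)$. Recalling that $\phi(a)=1$ and $\phi(b) = \omega$ with $\omega$ a primitive $q$th root of unity, this becomes $\omega^l = \phi^l(b) = \phi^l(b^{-1}) = \omega^{-l}$, that is $\omega^{2l} = 1$. Since $q$ is an odd prime, this forces $q \mid l$, contradicting $1 \le l \le q-1$. Hence no braiding structure exists, and by self-duality $\mathscr{A}_l$ admits no quasitriangular structure. I expect no genuine obstacle: the only delicate passage is the identification $g_0 = b$, $g_1 = b^{-1}$ from (iii), which rests entirely on the triviality of $Z(G)$, and once that is in hand the contradiction from (ii) is immediate.
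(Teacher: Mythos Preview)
Your proof is correct and follows essentially the same strategy as the paper: invoke Lemma \ref{lem3.6}, pin down $g_0=b$ and $g_1=b^{-1}$ from condition (iii), and extract the contradiction $\omega^{2l}=1$ from condition (ii). The only tactical difference is that you identify $g_0,g_1$ via the observation that $h\mapsto h\triangleleft g$ is conjugation by $b$ together with the triviality of $Z(G)$, whereas the paper first restricts $g_0$ to the fixed-point set $S=\{h:h\triangleleft g=h\}=\langle b\rangle$ (using the fact, shown inside the proof of Lemma \ref{lem3.6}, that $g_0\triangleleft g=g_0$) and then tests (iii) at $h=a$ to single out the correct power of $b$; your route is marginally cleaner but the two arguments are equivalent.
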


\begin{proof}
Let $0 \leq l \leq q-1$. Assume $\langle , \rangle$ is a braiding structure on $\mathscr{A}_l$. Let $S:=\{h\in G|\;h\triangleleft g=h\}$. Directly, we know $S$ is proper subgroup of $G$. Since $b^i\in S$ for $1\leq i \leq q$, we have $S=\{b^i|\;1\leq i \leq q\}$. By (iii) of Lemma \ref{lem3.6}, we have $g_0\in S$. Let $g_0=b^i$, where $1\leq i\leq q$. Because $h\triangleleft g=g_0hg_0^{-1}$, we get $a^t=b^ia b^{-i}$ by letting $h=a$. By definition, $b^ia b^{-i}=a^{t^i}$. Thus $a^t=a^{t^i}$. This implies $t^{i-1}\equiv 1(\text{mod}\;p)$. By assumption, we have $0\leq i-1 \leq q-1$. Thus we have $i=1$. And hence $g_0=b$. Similarly, we get $g_1=b^{-1}$. By (ii) of Lemma \ref{lem3.6}, we have $\phi^l(b)=\phi^l(b^{-1})$. Therefore we get $\omega^{2l}=1$. If $1 \leq l \leq q-1$, then $\omega^{2l}\neq 1$. But this contradicts the previous conclusion, so $\mathscr{A}_l$ admits no quasitriangular structure.
\end{proof}
The last task of this subsection is to find all possible quasitriangular structures on $\mathscr{A}_0$.
\begin{proposition}\label{pro3.8}
All the braiding structures on $\mathscr{A}_0$ are given by $\langle e_hg^i,e_k g^j\rangle=\delta_{h,b^j}\delta_{k,b^{-i}}\lambda^{ij}$, where $\lambda\in \Bbbk$ such that $\lambda^q=1$ and $h,k\in \mathbb{Z}_p\rtimes \mathbb{Z}_q$, $0\leq i,j \leq q-1$.
\end{proposition}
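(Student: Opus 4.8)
The plan is to show that a braiding structure on $\mathscr{A}_0$ is pinned down entirely by the single scalar $\lambda:=\langle g,g\rangle$, that the only constraint on $\lambda$ is $\lambda^q=1$, and that the displayed formula is then forced; afterwards I would check the converse.

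First I would locate the group-like data. Applying Lemma \ref{lem3.6} with $l=0$ produces $g_0,g_1\in G$ with $\langle e_h,g^i\rangle=\delta_{h,g_0^i}$, $\langle g^i,e_h\rangle=\delta_{h,g_1^i}$, and $h\triangleleft g=g_0hg_0^{-1}=g_1^{-1}hg_1$. The portion of the proof of Proposition \ref{pro3.7} that fixes $g_0,g_1$ uses only part (iii) of Lemma \ref{lem3.6}: it shows $g_0\in\langle b\rangle$, and then testing $h=a$ against $a\triangleleft g=a^t$ and using $t^{i}\equiv 1\pmod{p}\Leftrightarrow q\mid i$ forces $g_0=b$ and likewise $g_1=b^{-1}$. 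For $l=0$ condition (ii), namely $\phi^0(g_0)=\phi^0(g_1)$, is vacuous, so no contradiction arises and these values survive. Hence $\langle e_h,g^i\rangle=\delta_{h,b^i}$ and $\langle g^i,e_h\rangle=\delta_{h,b^{-i}}$.

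Next I would evaluate the form on the grouplikes. Since $g$ is grouplike, both $\langle -,g\rangle$ and $\langle g^i,-\rangle$ are algebra maps, so $\langle g^i,g^j\rangle=\langle g,g\rangle^{ij}=\lambda^{ij}$; and Lemma \ref{lem3.6}(ii) with $l=0$ gives $\lambda^q=\phi^0(g_0)=1$ (equivalently $\langle g^q,g\rangle=\langle 1,g\rangle=1$). Together with Lemma \ref{lem3.5}, which yields $\langle e_h,e_k\rangle=\varepsilon(e_h)\varepsilon(e_k)=\delta_{h,1}\delta_{k,1}$ on the Hopf subalgebra $\Bbbk^G$, this records every pairing between the two ``pure'' types of generators. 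The final step is to expand a general pairing. Writing $e_hg^i=e_h\cdot g^i$ and $e_kg^j=e_k\cdot g^j$, and using $\Delta(e_hg^i)=\sum_{h'h''=h}e_{h'}g^i\otimes e_{h''}g^i$ (from Definition \ref{def2.1.2}, since $\sigma=\tau=1$ and $\triangleright$ is trivial for $\mathscr{A}_0$), I would apply $\langle ab,c\rangle=\langle a,c_{(1)}\rangle\langle b,c_{(2)}\rangle$ in the first slot and $\langle a,bc\rangle=\langle a_{(1)},c\rangle\langle a_{(2)},b\rangle$ in the second. The constituents $\langle e_h,e_{k'}g^j\rangle=\delta_{h,b^j}\delta_{k',1}$ and $\langle g^i,e_{k''}g^j\rangle=\lambda^{ij}\delta_{k'',b^{-i}}$ fall out of the data assembled above, and summing over $k'k''=k$ collapses to $\langle e_hg^i,e_kg^j\rangle=\delta_{h,b^j}\delta_{k,b^{-i}}\lambda^{ij}$, as claimed.

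This proves that every braiding has the stated form with $\lambda^q=1$. For the converse I would substitute the formula back into the three defining identities of a braiding structure; because the form is trivial on $\Bbbk^G$ and diagonal in the $\mathbb{Z}_q$-grading carried by $g$, the verification reduces to the bicharacter property of $(i,j)\mapsto\lambda^{ij}$ on $\mathbb{Z}_q$ together with $\lambda^q=1$, both already in hand. I expect the main obstacle to be precisely this converse check, in particular verifying the commutation axiom $\langle a_{(1)},b_{(1)}\rangle a_{(2)}b_{(2)}=\langle a_{(2)},b_{(2)}\rangle b_{(1)}a_{(1)}$ against the noncommutative multiplication of $\mathscr{A}_0$, since the forward direction is essentially bookkeeping once $g_0=b$ and $g_1=b^{-1}$ are established.
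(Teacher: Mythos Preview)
Your proposal is correct and follows essentially the same route as the paper. For the forward direction the paper does exactly what you describe: invoke Lemma \ref{lem3.5}, reuse the argument of Proposition \ref{pro3.7} to pin down $g_0=b$ and $g_1=b^{-1}$, extract $\lambda=\langle g,g\rangle$ with $\lambda^q=1$, and then expand $\langle e_hg^i,e_kg^j\rangle$ via the braiding axioms. For the converse, which you rightly flag as the substantive part, the paper does not appeal to a general reduction principle but simply verifies each of the three braiding identities by direct computation on basis elements; the commutation axiom goes through because the Kronecker deltas force the group elements entering the products to lie in $\langle b\rangle$, where the action $\triangleleft$ is trivial, so both sides collapse to the same expression.
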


\begin{proof}
Assume $\langle , \rangle$ is a braiding structure on $\mathscr{A}_0$. By Lemma \ref{lem3.5}, we have $\langle e_g,e_h\rangle=\epsilon(e_g)\epsilon(e_h)$ for $g,h\in G$. Similar to the proof of Proposition \ref{pro3.7}, we get $g_0=b$ and $g_1=b^{-1}$. Let $\langle g, g\rangle=\lambda$. Note that $g$ is a grouplike element, and by induction way, we have $\langle g^i, g^j\rangle=\lambda^{ij}$. This implies $\langle g^q, g\rangle=\lambda^{q}$. Because $g^q=1$, we get $\lambda^q=1$. Since $\langle , \rangle$ is a braiding structure, $\langle e_hg^i,e_k g^j\rangle=\sum_{rs=k}\langle e_h,e_rg^j\rangle \langle g^i,e_sg^j\rangle$. Directly, we have $\langle e_h,e_rg^j\rangle=\sum_{uv=h}\langle e_u,g^j\rangle\langle e_v,e_r\rangle=\delta_{r,1}\delta_{h,b^j}$ and $\langle g^i,e_sg^j\rangle=\langle g^i,e_s\rangle\langle g^i,e_sg^j\rangle=\delta_{s,b^{-i}}\lambda^{ij}$. Thus $\langle e_hg^i,e_k g^j\rangle=\delta_{h,b^j}\delta_{k,b^{-i}}\lambda^{ij}$.

Conversely, let $\lambda\in \Bbbk$ such that $\lambda^q=1$ and we can define $\langle e_hg^i,e_k g^j\rangle=\delta_{h,b^j}\delta_{k,b^{-i}}\lambda^{ij}$, where $h,k\in \mathbb{Z}_p\rtimes \mathbb{Z}_q$, $0\leq i,j \leq q-1$. Due to $\lambda^q=1$, we have $\langle e_hg^i,e_k g^j\rangle=\delta_{h,b^j}\delta_{k,b^{-i}}\lambda^{ij}$ for $i,j\in \mathbb{N}$. Next we will show $\langle , \rangle$ is a braiding structure. Firstly, we show $\langle (e_hg^i)(e_k g^j),e_r g^m\rangle=\langle e_hg^i,(e_r g^m)_{(1)}\rangle \langle e_k g^j,(e_r g^m)_{(2)}\rangle$. Since
\begin{align*}
\langle (e_hg^i)(e_k g^j),e_r g^m\rangle&=\langle \delta_{h\triangleleft g^{i},k}e_h g^{i+j},e_r g^m\rangle\\
&=\delta_{h\triangleleft g^{i},k}\delta_{h,b^m}\delta_{r,b^{-(i+j)}}\lambda^{(i+j)m}\\
&=\delta_{h,k}\delta_{h,b^m}\delta_{r,b^{-(i+j)}}\lambda^{(i+j)m}
\end{align*}
and
\begin{align*}
\langle e_hg^i,(e_r g^m)_{(1)}\rangle \langle e_k g^j,(e_r g^m)_{(2)}\rangle&=\sum_{cd=r}\langle e_hg^i,e_c g^m\rangle \langle e_k g^j,e_d g^m\rangle\\
&=\sum_{cd=r}\delta_{h,b^m}\delta_{c,b^{-i}}\lambda^{im} \langle e_k g^j,e_d g^m\rangle\\
&=\sum_{cd=r}\delta_{h,b^m}\delta_{c,b^{-i}}\lambda^{im}\delta_{k,b^m}\delta_{d,b^{-j}}\lambda^{jm}\\
&=\delta_{h,k}\delta_{h,b^m}\delta_{r,b^{-(i+j)}}\lambda^{(i+j)m},
\end{align*}
we have $\langle (e_hg^i)(e_k g^j),e_r g^m\rangle=\langle e_hg^i,(e_r g^m)_{(1)}\rangle \langle e_k g^j,(e_r g^m)_{(2)}\rangle$. Secondly, we will show $\langle e_hg^i,e_k g^je_r g^m\rangle=\langle (e_h g^i)_{(1)},e_r g^m\rangle \langle (e_h g^i)_{(2)},e_k g^j\rangle$. Since
\begin{align*}
\langle e_hg^i,e_k g^je_r g^m\rangle&=\langle e_hg^i,\delta_{k\triangleleft g^{j},r}e_kg^{j+m}\rangle\\
                &=\delta_{k\triangleleft g^{j},r} \delta_{h,b^{m+j}}\delta_{k,b^{-i}}\lambda^{i(j+m)}\\
                &=\delta_{k,r} \delta_{h,b^{m+j}}\delta_{k,b^{-i}}\lambda^{i(j+m)}
\end{align*}
and
\begin{align*}
\langle (e_h g^i)_{(1)},e_r g^m\rangle \langle (e_h g^i)_{(2)},e_k g^j\rangle &=\sum_{cd=h}\langle e_c g^i,e_r g^m\rangle \langle e_d g^i,e_k g^j\rangle\\
                &=\sum_{cd=h}\delta_{c,b^m} \delta_{r,b^{-i}}\lambda^{im} \langle e_d g^i,e_k g^j\rangle\\
                &=\sum_{cd=h}\delta_{c,b^m} \delta_{r,b^{-i}}\lambda^{im}\delta_{d,b^j} \delta_{k,b^{-i}}\lambda^{ij}\\
                &=\delta_{k,r} \delta_{h,b^{m+j}}\delta_{k,b^{-i}}\lambda^{i(j+m)},
\end{align*}
we have $\langle e_hg^i,e_k g^je_r g^m\rangle=\langle (e_h g^i)_{(1)},e_r g^m\rangle \langle (e_h g^i)_{(2)},e_k g^j\rangle$. Finally, we will show the following equation
\begin{align}
\label{eq2.1} \langle (e_hg^i)_{(1)},(e_kg^j)_{(1)}\rangle (e_hg^i)_{(2)}(e_kg^j)_{(2)}=(e_kg^j)_{(1)}(e_hg^i)_{(1)}\langle (e_hg^i)_{(2)},(e_kg^j)_{(2)}\rangle .
\end{align}
Since
\begin{align*}
\langle (e_hg^i)_{(1)},(e_kg^j)_{(1)}\rangle (e_hg^i)_{(2)}(e_kg^j)_{(2)}&=\sum_{rs=h}\sum_{cd=k}\langle e_rg^i,e_c g^j\rangle (e_sg^i)(e_d g^j)\\
                &=\sum_{rs=h}\sum_{cd=k}\delta_{r,b^j}\delta_{c,b^{-i}}\lambda^{ij}(e_sg^i)(e_d g^j)\\
                &=\lambda^{ij}(e_{b^{-j}h}g^i)(e_{b^ik} g^j)\\
                &=\lambda^{ij}\delta_{b^{-j}h,kb^i}e_{b^{-j}h}g^{i+j}
\end{align*}
and
\begin{align*}
(e_kg^j)_{(1)}(e_hg^i)_{(1)}\langle (e_hg^i)_{(2)},(e_kg^j)_{(2)}\rangle &=\sum_{rs=h}\sum_{cd=k} (e_cg^j)(e_r g^i)\langle e_s g^i,e_d g^j\rangle\\
                &=\sum_{rs=h}\sum_{cd=k} (e_cg^j)(e_r g^i)\delta_{s,b^j}\delta_{d,b^{-i}}\lambda^{ij}\\
                &=\lambda^{ij} (e_{kb^i} g^j)(e_{hb^{-j}}g^i)\\
                &=\lambda^{ij}\delta_{kb^i,b^{-j}h}e_{kb^i}g^{i+j}\\
                &=\lambda^{ij}\delta_{b^{-j}h,kb^i}e_{b^{-j}h}g^{i+j},
\end{align*}
we have the equation \eqref{eq2.1}.
\end{proof}
\subsection{Quasitriangular structures on $\mathscr{B}_{\lambda}$ and $\mathscr{B}_{\lambda}^\ast$.}

Let $0 \leq \lambda< p-1$. We will consider the quasitriangular structures on $\mathscr{B}_{\lambda}$ and $\mathscr{B}_{\lambda}^\ast$ in this subsection. The following result seems has its own interesting.

\begin{lemma}\label{lem3.9}
Let $(H,R)$ be a quasitriangular Hopf algebra over $\Bbbk$. Assume $G(H)$ is an abelian group and all proper Hopf subalgebras are trivial. Then $H_R=H$ or $H_R \subseteq \Bbbk G(H) $. Moreover, we have $H^{\ast cop}\cong H$ if $H_R=H$.
\end{lemma}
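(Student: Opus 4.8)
The plan is to study the Radford subalgebras $H_l$ and $H_r$ attached to $R$, using the facts recalled above: both are Hopf subalgebras of $H$, they satisfy $H_l^{\ast cop}\cong H_r$, and $H_R=H_lH_r$ is the Hopf subalgebra they generate, so in particular $H_R\supseteq H_l$ and $H_R\supseteq H_r$. The statement then splits according to whether $H_l$ equals $H$ or is a proper Hopf subalgebra.

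First I would dispose of the case $H_l=H$. Here $H^{\ast cop}=H_l^{\ast cop}\cong H_r\subseteq H$, and comparing dimensions forces $H_r=H$; hence $H_R=H_lH_r=H$ and, at the same time, $H^{\ast cop}\cong H_r=H$. This branch simultaneously yields the conclusion $H_R=H$ and the self-duality asserted in the ``moreover''.

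Next, suppose $H_l\subsetneq H$. Then $H_l$ is a proper Hopf subalgebra, hence \emph{trivial} by hypothesis, i.e.\ a group algebra $\Bbbk F$ or a function algebra $\Bbbk^{F}$. I would use $H_l^{\ast cop}\cong H_r$ to identify $H_r$: if $H_l=\Bbbk F$ then $H_r\cong\Bbbk^{F}$, whereas if $H_l=\Bbbk^{F}$ then $H_r\cong\Bbbk F$. In the latter case $H_r$ is a group algebra, so $H_r=\Bbbk G(H_r)$ with $G(H_r)\cong F\hookrightarrow G(H)$; since $G(H)$ is abelian, $F$ is abelian. In the former case $F=G(H_l)\hookrightarrow G(H)$ is abelian outright. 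Thus $F$ is abelian in both cases, and consequently both $H_l$ and $H_r$ are cocommutative. By the structure theorem for finite-dimensional cocommutative Hopf algebras over $\Bbbk$ (they are group algebras), $H_l=\Bbbk G(H_l)$ and $H_r=\Bbbk G(H_r)$, and since $G(H_l),G(H_r)\subseteq G(H)$ we obtain $H_R=H_lH_r\subseteq\Bbbk G(H)$. This establishes the dichotomy.

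Finally, for the remaining half of the ``moreover'', I would observe that if $H_R=H$ but $H_l\neq H$, then the previous paragraph gives $H=H_R\subseteq\Bbbk G(H)$, so $H=\Bbbk G(H)$ is the group algebra of the abelian group $G(H)$; such a group algebra is both commutative and cocommutative, hence self-dual, whence $H^{\ast cop}\cong H$. The one delicate point throughout is forcing a proper $H_l$ to be \emph{cocommutative} rather than merely commutative: this is exactly where the two hypotheses---that $G(H)$ is abelian and that $H_l^{\ast cop}\cong H_r$---must be played off against each other, the abelianness of $G(H)$ being what rules out $H_l$ (or $H_r$) being the dual of a nonabelian group algebra.
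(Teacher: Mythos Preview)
Your proof is correct and follows essentially the same approach as the paper's: both arguments hinge on the triviality of a proper $H_l$, use the duality $H_l^{\ast cop}\cong H_r$ to ensure one of $H_l,H_r$ is a group algebra, and then invoke the abelianness of $G(H)$ to force the other one to be a group algebra as well, giving $H_R\subseteq\Bbbk G(H)$. The only cosmetic difference is that you organize the case split around whether $H_l=H$, while the paper splits first on whether $H_R=H$; the content of the two arguments is the same.
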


\begin{proof}
Assume $H_R\neq H$, then we will show $H_R \subseteq \Bbbk G(H) $. Since $H_l\subseteq H_R$, we have $H_l\neq H$. By assumption, we know $H_l$ is trivial, i.e it is group algebra or the dual of group algebra. Since $H_l^{\ast cop}\cong H_r$, we obtain $H_l$ or $H_r$ is group algebra. Assume $H_l$ is group algebra. By definition, we have $H_l\subseteq \Bbbk G(H)$. Because $G(H)$ is an abelian group, we know $H_l$ is commutative. This implies $H_r$ is also group algebra. Thus $H_r\subseteq \Bbbk G(H)$. Therefore, $H_R \subseteq \Bbbk G(H) $. Similarly, if $H_r$ is group algebra, then we also have $H_R \subseteq \Bbbk G(H) $. So we have shown $H_R \subseteq \Bbbk G(H)$ when $H_R\neq H$.

Assume $H_R=H$. We will prove $H^{\ast cop}\cong H$ in this case. If $H_l\neq H$, then we can repeat the previous process and get $H_R \subseteq \Bbbk G(H)$. This implies $\Bbbk G(H)=H$. Since $G(H)$ is an abelian group, we know $H^{\ast cop}\cong H$. If $H_l=H$, then we have $H_r=H$. Because $H_l^{\ast cop}\cong H_r$, we also obtain $H^{\ast cop}\cong H$.
\end{proof}
With the help of this lemma, we can discuss the quasitriangular structures on $\mathscr{B}_{\lambda}$ (resp. $\mathscr{B}_{\lambda}^\ast$), which are defined in Example \ref{ex2.1.6} (resp. Example \ref{ex2.1.7}). At first, we find that
\begin{proposition}\label{pro3.10}
The Hopf algebras $\mathscr{B}_\lambda^\ast$ admit no quasitriangular structure, where $0 \leq \lambda< p-1$.
\end{proposition}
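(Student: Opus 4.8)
The plan is to apply Lemma \ref{lem3.9} to $H=\mathscr{B}_\lambda^\ast$, which requires first verifying its two hypotheses: that $G(\mathscr{B}_\lambda^\ast)$ is abelian and that every proper Hopf subalgebra of $\mathscr{B}_\lambda^\ast$ is trivial. The group $G(\mathscr{B}_\lambda^\ast)$ is described in Remark \ref{rem1.1}: it has order $pq$ when $\lambda=0$ and order $p$ when $\lambda\neq 0$, and in either case it is a subgroup of the abelian group built from $\Bbbk^G$ with $G=\mathbb{Z}_q\times\mathbb{Z}_q$ together with the grouplikes coming from $F=\mathbb{Z}_p$; I would check directly from Example \ref{ex2.1.7} that this grouplike group is abelian. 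For the triviality of proper Hopf subalgebras, I would use Nichols--Zoeller to restrict the possible dimensions to $\{p,q,pq,q^2\}$ and then rule out, or identify as trivial, each possibility using the structure of $\mathscr{B}_\lambda^\ast$ from Example \ref{ex2.1.7}; the semisimple classification results (Lemmas \ref{thm2.4} and \ref{thm3.2}) together with the explicit action $g^i\triangleright a=a^{m^i}$, $g^i\triangleright b=b^{m^{\lambda i}}$ force any proper Hopf subalgebra either to be a group algebra or a dual group algebra.

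Once the hypotheses are checked, Lemma \ref{lem3.9} gives the dichotomy: any quasitriangular structure $R$ on $\mathscr{B}_\lambda^\ast$ satisfies either $(\mathscr{B}_\lambda^\ast)_R\subseteq \Bbbk G(\mathscr{B}_\lambda^\ast)$ or $(\mathscr{B}_\lambda^\ast)_R=\mathscr{B}_\lambda^\ast$, and in the latter case $(\mathscr{B}_\lambda^\ast)^{\ast cop}\cong \mathscr{B}_\lambda^\ast$. The strategy is to derive a contradiction in both branches. In the first branch, $(\mathscr{B}_\lambda^\ast)_R\subseteq \Bbbk G(\mathscr{B}_\lambda^\ast)$ forces $R$ to live inside the commutative subalgebra spanned by grouplikes; I would then argue that the braiding relation $\Delta^{op}(h)R=R\Delta(h)$, applied to an element $h$ outside $\Bbbk G(\mathscr{B}_\lambda^\ast)$ that witnesses the noncocommutativity (the nontrivial $\tau$ in Example \ref{ex2.1.6}, dually the nontrivial $\sigma$ in Example \ref{ex2.1.7}), cannot hold unless $R=1\otimes1$, and then show directly that $R=1\otimes1$ is impossible because $\mathscr{B}_\lambda^\ast$ is not cocommutative (its coproduct involves the nontrivial $\sigma$).

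In the second branch, $\mathscr{B}_\lambda^\ast\cong(\mathscr{B}_\lambda^\ast)^{\ast cop}$. Taking duals this says $\mathscr{B}_\lambda^\ast$ is self-dual up to $\mathrm{cop}$, equivalently $\mathscr{B}_\lambda$ and $\mathscr{B}_\lambda^\ast$ are related by a $\mathrm{cop}$-isomorphism. I would block this by a structural invariant that $\mathscr{B}_\lambda^\ast$ and its $\mathrm{cop}$-dual do not share --- the most efficient being the order of the group of grouplikes. From Remark \ref{rem1.1} we have $|G(\mathscr{B}_\lambda^\ast)|=p$ for $\lambda\neq0$ while $|G(\mathscr{B}_\lambda)|=q^2$, and since grouplikes are preserved under taking $\mathrm{cop}$ and the number of grouplikes of a Hopf algebra equals the number of one-dimensional corepresentations of its dual, an isomorphism $\mathscr{B}_\lambda^\ast\cong(\mathscr{B}_\lambda^\ast)^{\ast cop}$ would force $|G(\mathscr{B}_\lambda^\ast)|=|G(\mathscr{B}_\lambda)|$, i.e. $p=q^2$, a contradiction; the case $\lambda=0$ needs the separate count $|G(\mathscr{B}_0^\ast)|=pq$ against $|G(\mathscr{B}_0)|=q^2$, giving $pq=q^2$, again impossible since $p\neq q$.

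\textbf{Main obstacle.} The hardest part will be the first branch: showing that $R\in\Bbbk G(\mathscr{B}_\lambda^\ast)\otimes\Bbbk G(\mathscr{B}_\lambda^\ast)$ cannot actually intertwine $\Delta$ and $\Delta^{op}$ unless it is trivial, and then that triviality itself is excluded. This requires a careful bookkeeping of the coproduct formula from Definition \ref{def2.1.2} specialized via Example \ref{ex2.1.7}, tracking how the nontrivial $\sigma$ (the dual of $\tau$ in $\mathscr{B}_\lambda$) obstructs cocommutativity on the grouplike-generated subalgebra. I would set up the analysis so that the count-of-grouplikes argument of the second branch can be reused, ideally collapsing both branches into the single numerical obstruction $|G(\mathscr{B}_\lambda^\ast)|\neq|G(\mathscr{B}_\lambda)|$.
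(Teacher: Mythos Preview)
Your plan matches the paper's proof: verify the hypotheses of Lemma \ref{lem3.9}, eliminate the branch $H_R=H$ via the grouplike-count mismatch $|G(\mathscr{B}_\lambda^\ast)|\in\{p,pq\}$ versus $|G(\mathscr{B}_\lambda)|=q^2$, and then eliminate $R\in\Bbbk G(\mathscr{B}_\lambda^\ast)^{\otimes2}$ by testing $\Delta^{op}(a)R=R\Delta(a)$ on the non-grouplike element $a$.

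Two small corrections to your bookkeeping. First, in $\mathscr{B}_\lambda^\ast$ (Example \ref{ex2.1.7}) the coproduct cocycle $\tau'$ is trivial, so the noncocommutativity you need comes from the nontrivial action $\triangleright$ in the formula $\Delta(e_g\#f)=\sum_{g'g''=g}e_{g'}\#(g''\triangleright f)\otimes e_{g''}\#f$, not from $\sigma$ (which enters only the product). Second, the paper splits the first branch into $\lambda\neq0$ and $\lambda=0$: for $\lambda\neq0$ the grouplike group is $\mathbb{Z}_p$ and a direct basis comparison shows $\Delta^{op}(a)R\neq R\Delta(a)$ for every such $R$ at once (no intermediate reduction to $R=1\otimes1$ is needed or attempted); for $\lambda=0$ the grouplike group has order $pq$, the analogous comparison instead forces enough coefficients of $R$ to vanish that the surviving $R$ is not invertible. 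You should anticipate this bifurcation rather than hoping both cases collapse into a single argument.
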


\begin{proof}
Let $0 \leq \lambda< p-1$. Assume $R$ is a quasitriangular structure on $\mathscr{B}_\lambda^\ast$. We first show that $(\mathscr{B}_\lambda^\ast,R)$ satisfies the conditions of Lemma \ref{lem3.9}. By Remark \ref{rem1.1}, we get $G(\mathscr{B}_\lambda^\ast)$ is an abelian group. Due to the dimension reason and Lemma \ref{thm2.4}, we know all proper Hopf subalgebras of $\mathscr{B}_\lambda^\ast$ are trivial. Thus $\mathscr{B}_\lambda^\ast$ satisfies the conditions of Lemma \ref{lem3.9}. By Remark \ref{rem1.1} again, we have $G(\mathscr{B}_{\lambda}^{cop})\not \cong G(\mathscr{B}_{\lambda}^\ast)$ and hence $\mathscr{B}_{\lambda}^{cop}\not \cong \mathscr{B}_{\lambda}^\ast$. Thus $R\in \Bbbk G(\mathscr{B}_{\lambda}^\ast)\otimes \Bbbk G(\mathscr{B}_{\lambda}^\ast)$.

If $\lambda\neq 0$, then we have $\Bbbk G(\mathscr{B}_{\lambda}^\ast)=\{e_{g^i}|\;0\leq i \leq p-1\}$ by Remark \ref{rem1.1}. Since $R\in \Bbbk G(\mathscr{B}_{\lambda}^\ast)\otimes \Bbbk G(\mathscr{B}_{\lambda}^\ast)$, we can assume that $R=\sum_{r,s\in G}w(r,s)e_r\otimes e_s$, where $G=G(\mathscr{B}_{\lambda}^\ast)$ and $w$ is a bicharacter on $G$. Because $R$ is a quasitriangular structure, we have $\Delta^{cop}(a)R=R\Delta(a)$. Recall the Example \ref{ex2.1.7}, we get $\Delta(a)=\sum_{r,s\in G}e_r (s\triangleright a)\otimes e_s a$. Thus we have
\begin{align}
\Delta^{cop}(a)R&=[\sum_{r,s\in G}e_s a \otimes e_r (s\triangleright a)]R\\
&=[\sum_{r,s\in G}e_s a \otimes e_r (s\triangleright a)][\sum_{r,s\in G}w(r,s)e_r\otimes e_s]\\
&=\sum_{r,s\in G}w(r,s)e_r a\otimes e_s (r\triangleright a)
\end{align}
and $R\Delta(a)=\sum_{r,s\in G}w(r,s)e_r (s\triangleright a)\otimes e_s a $. Since $e_g a\otimes e_g (g\triangleright a)$ appear in $\Delta^{cop}(a)R$ while not in $R\Delta(a)$, we know $\Delta^{cop}(a)R\neq R\Delta(a)$. But this contradicts with the previous conclusion, so $\mathscr{B}_\lambda^\ast$ admits no quasitriangular structure.

If $\lambda=0$. By Remark \ref{rem1.1}, we get $\{ e_{g^i} b^j|\;0\leq i \leq p-1, 0\leq j \leq q-1\}$ is a linear basis of $\Bbbk G(\mathscr{B}_{0}^\ast)$. Because $R\in \Bbbk G(\mathscr{B}_{0}^\ast)\otimes \Bbbk G(\mathscr{B}_{0}^\ast)$, we can assume that $R=\sum_{0\leq i,j \leq p-1, 0\leq k,l \leq q-1} \lambda_{i,j}^{k,l}e_{g^i}b^k\otimes e_{g^j}b^l$, where $\lambda_{i,j}^{k,l}\in \Bbbk$. Because $R$ is a quasitriangular structure, we have $\Delta^{cop}(a)R=R\Delta(a)$. By Example \ref{ex2.1.7}, we get $\Delta(a)=\sum_{0\leq i,j \leq p-1}e_{g^i}a^{m^j}\otimes e_{g^j}a$. If we mimic the previous process for $\mathscr{B}_{\lambda}^\ast$, $\lambda\neq 0$, then we get
\begin{align}
\label{eq2.4} \Delta^{cop}(a)R=\sum\limits_{\begin{subarray}{l}  0\leq i,j \leq p-1  \\
                             0\leq k,l \leq q-1  \\
        \end{subarray}}\lambda_{i,j}^{k,l}e_{g^i}ab^k\otimes e_{g^j}a^{m^i}b^l.
\end{align}
Similarly, we have
\begin{align}
\label{eq2.5} R\Delta(a)=\sum\limits_{\begin{subarray}{l}  0\leq i,j \leq p-1  \\
                             0\leq k,l \leq q-1  \\
        \end{subarray}}\lambda_{i,j}^{k,l}e_{g^i}b^ka^{m^j}\otimes e_{g^j}b^la.
\end{align}
By definition, we get $ba=\sum_{0\leq i \leq p-1}\zeta_i^{m^{(\lambda+1)i}}e_{g^i}(ab)$. By induction way, we have $e_{g^i}b^ka^{m^j}=\zeta_i^{km^{(\lambda+1)i}m^j}e_{g^i}a^{m^j}b^k$ and $ e_{g^j}b^la=\zeta_j^{m^{(\lambda+1)j}l}e_{g^j}ab^l$. By combining these equalities with the equation \eqref{eq2.5}, we get
\begin{align}
\label{eq2.6} R\Delta(a)=\sum\limits_{\begin{subarray}{l}  0\leq i,j \leq p-1  \\
                             0\leq k,l \leq q-1  \\
        \end{subarray}}\lambda_{i,j}^{k,l}\zeta_i^{km^{(\lambda+1)i}m^j}\zeta_j^{m^{(\lambda+1)j}l} e_{g^i}a^{m^j}b^k\otimes e_{g^j}ab^l.
\end{align}
If $1\leq i\leq p-1$ and $0\leq j\leq p-1$, $0\leq k,l \leq q-1$, then $e_{g^i}ab^k\otimes e_{g^j}a^{m^i}b^l$ will appear in \eqref{eq2.4} while not in \eqref{eq2.6}. Since $\{e_{g^i}a^kb^l|\;0\leq i \leq p-1, 0\leq k,l \leq q-1\}$ is a linear basis of $\mathscr{B}_{0}$ and $\Delta^{cop}(a)R=R\Delta(a)$, we get the coefficient of $e_{g^i}ab^k\otimes e_{g^j}a^{m^i}b^l$ in \eqref{eq2.5} is $0$, i.e we have $\lambda_{i,j}^{k,l}=0$ for $i\neq 0$. Thus $R=\sum_{0\leq j \leq p-1, 0\leq k,l \leq q-1} \lambda_{0,j}^{k,l}e_{1}b^k\otimes e_{g^j}b^l$. Directly we have $(e_a\otimes e_1)R=0$, and hence $R$ is not invertible. This implies $R$ is not quasitriangular structure, so $\mathscr{B}_0^\ast$ admits no quasitriangular structure.
\end{proof}

Secondly, we can describe all possible quasitriangular structures on $\mathscr{B}_{\lambda}$. Before this, we define $\eta(h,k,g^i):=\tau(h,k,g^i)\tau(k,h,g^i)^{-1}$ for $h,k
\in \mathbb{Z}_q\times \mathbb{Z}_q, i\geq 0$. Then we have
\begin{proposition}\label{pro3.11}
All the quasitriangular structures on $\mathscr{B}_{\lambda}$ are given by $$R=\sum_{r,s\in G}w(r,s)e_r \otimes e_s,$$ where $w$ is a bicharacter on $\mathbb{Z}_q \times \mathbb{Z}_q$ such that the following conditions
         \begin{align*}
w(a,a)=w(a^m,a^m), \;w(a,b)=w(a^m,b^{m^\lambda})\eta(a^m,b^{m^\lambda},g),\\
w(b,a)=w(b^{m^\lambda},a^m)\eta(b^{m^\lambda},a^m,g),\;w(b,b)=w(b^{m^\lambda},b^{m^\lambda}).
\end{align*}
\end{proposition}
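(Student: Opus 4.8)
The plan is to follow the strategy already used for $\mathscr{B}_\lambda^\ast$ in Proposition \ref{pro3.10}, but now in the favorable situation where a genuine family of $R$-matrices survives. First I would check that $(\mathscr{B}_\lambda, R)$ meets the hypotheses of Lemma \ref{lem3.9}. By Remark \ref{rem1.1} the group $G(\mathscr{B}_\lambda)\cong \mathbb{Z}_q\times\mathbb{Z}_q$ is abelian, and since $\mathscr{B}_\lambda$ is semisimple every Hopf subalgebra is semisimple of dimension dividing $pq^2$; the proper nontrivial possibilities are $p$ and $q$ (group algebras by \cite{ZY}), $pq$ (trivial by Lemma \ref{thm2.4}), and $q^2$ (a group algebra, since the only non-group-algebra Hopf algebra of dimension $q^2$ is the non-semisimple Taft algebra), all of which are trivial. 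Thus Lemma \ref{lem3.9} applies and yields either $H_R=\mathscr{B}_\lambda$ or $H_R\subseteq \Bbbk G(\mathscr{B}_\lambda)=\Bbbk^G$. To discard the first alternative I would invoke the ``moreover'' clause of Lemma \ref{lem3.9}: $H_R=\mathscr{B}_\lambda$ would force $\mathscr{B}_\lambda^{\ast cop}\cong\mathscr{B}_\lambda$, which is impossible because $|G(\mathscr{B}_\lambda^{\ast cop})|=|G(\mathscr{B}_\lambda^\ast)|\in\{p,pq\}$ by Remark \ref{rem1.1}, while $|G(\mathscr{B}_\lambda)|=q^2$. Hence $H_R\subseteq\Bbbk^G$, and since $R\in H_R\otimes H_R$ we may write $R=\sum_{r,s\in G}w(r,s)\,e_r\otimes e_s$ for scalars $w(r,s)$.

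Next I would extract the bicharacter condition from the two $\Delta$-multiplicativity axioms. As $\tau(\cdot,\cdot,1)=1$, the Hopf subalgebra $\Bbbk^G$ carries the standard structure $e_re_s=\delta_{r,s}e_r$ and $\Delta(e_r)=\sum_{uv=r}e_u\otimes e_v$, which is cocommutative because $G$ is abelian. Expanding $(\Delta\otimes\id)(R)=R_{13}R_{23}$ in the idempotent basis gives $w(uv,s)=w(u,s)w(v,s)$, and likewise $(\id\otimes\Delta)(R)=R_{13}R_{12}$ gives $w(r,st)=w(r,s)w(r,t)$; together these say exactly that $w$ is a bicharacter on $\mathbb{Z}_q\times\mathbb{Z}_q$. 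The third axiom $\Delta^{op}(h)R=R\Delta(h)$ is automatic for $h\in\Bbbk^G$ by cocommutativity, so the only remaining content is its validity on the algebra generator $1\#g=\sum_{x\in G}e_x\#g$ (the elements $\{e_r\}$ together with $1\#g$ generate $\mathscr{B}_\lambda$, and the axiom is multiplicative in $h$).

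The heart of the computation is therefore to evaluate both sides of $\Delta^{op}(1\#g)R=R\Delta(1\#g)$. Using $\Delta(1\#g)=\sum_{h,k}\tau(h,k,g)(e_h\#g)\otimes(e_k\#g)$ together with the products $(e_r\#1)(e_h\#g)=\delta_{r,h}\,e_r\#g$ and $(e_h\#g)(e_r\#1)=\delta_{h\triangleleft g,r}\,e_h\#g$, I would match the coefficient of $(e_h\#g)\otimes(e_k\#g)$ on each side to obtain $\tau(k,h,g)\,w(h\triangleleft g,k\triangleleft g)=w(h,k)\,\tau(h,k,g)$, that is,
\[
w(h\triangleleft g,\,k\triangleleft g)=w(h,k)\,\eta(h,k,g)\qquad(h,k\in G).
\]
Because $h\mapsto h\triangleleft g$ is a group automorphism of $G$ and both $w$ and $\eta(\cdot,\cdot,g)$ are bimultiplicative, each side is a bicharacter in $(h,k)$, so this one family of identities is equivalent to its four instances on the generators. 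Evaluating at $(h,k)=(a^m,a^m),(a^m,b^{m^\lambda}),(b^{m^\lambda},a^m),(b^{m^\lambda},b^{m^\lambda})$ and using $a\triangleleft g=a^{m^{-1}}$, $b\triangleleft g=b^{m^{-\lambda}}$ (so that, e.g., $a^m\triangleleft g=a$) turns the four identities into precisely the stated conditions, the first and last having $\eta=1$. Conversely, given any bicharacter $w$ obeying the four conditions, the same computations run backwards to verify all three axioms, and $R$ is invertible with $R^{-1}=\sum_{r,s}w(r,s)^{-1}e_r\otimes e_s$ since the $w(r,s)$ are roots of unity.

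The main obstacle I anticipate is bookkeeping rather than conceptual: keeping the direction of the action $\triangleleft$ straight (the convention $a\triangleleft g^{-i}=a^{m^i}$ produces $a\triangleleft g=a^{m^{-1}}$, and it is the re-indexing of the base points by the automorphism $a\mapsto a^m,\ b\mapsto b^{m^\lambda}$ that recovers the $m,m^\lambda$ appearing in the statement), and correctly tracking the cocycle factors $\tau(h,k,g)=\zeta^{jk}$ so that they assemble into $\eta(h,k,g)$. The one point needing a short justification beyond direct calculation is the legitimacy of reducing to generators, namely that both sides of the displayed identity are genuinely bicharacters in $(h,k)$.
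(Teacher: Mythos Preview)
Your proposal is correct and follows essentially the same route as the paper: apply Lemma \ref{lem3.9} together with Remark \ref{rem1.1} to force $R\in\Bbbk^G\otimes\Bbbk^G$ with $w$ a bicharacter, then reduce the axiom $\Delta^{cop}(g)R=R\Delta(g)$ to the identity $w(h\triangleleft g,k\triangleleft g)=w(h,k)\,\eta(h,k,g)$ and specialize to generators. The only cosmetic difference is that the paper passes to the equivalent form $w(r,s)=w(r\triangleleft g^{-1},s\triangleleft g^{-1})\,\eta(r\triangleleft g^{-1},s\triangleleft g^{-1},g)$ and evaluates at $r,s\in\{a,b\}$, whereas you evaluate the original identity at $(h,k)\in\{a^m,b^{m^\lambda}\}^2$; since $a^m,b^{m^\lambda}$ generate $G$ this is the same reduction.
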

\begin{proof}
Assume $R$ is a quasitriangular structure on $\mathscr{B}_{\lambda}$. By Remark \ref{rem1.1}, we get $G(\mathscr{B}_{\lambda}^{\ast cop})\not \cong G(\mathscr{B}_{\lambda})$ and hence $\mathscr{B}_{\lambda}^{\ast cop}\not \cong \mathscr{B}_{\lambda}$. Due to the dimension reason and Theorem \ref{thm2.4}, we know all proper Hopf subalgebras of $\mathscr{B}_{\lambda}$ are trivial. By Lemma \ref{lem3.9}, we can suppose that $R=\sum_{r,s\in G}w(r,s)e_r \otimes e_s$, where $w$ is a bicharacter on $\mathbb{Z}_q \times \mathbb{Z}_q$. Since $R$ is a quasitriangular structure, we have $\Delta^{cop}(g)R=R\Delta(g)$. By definition, we get $\Delta(g)=\sum_{r,s\in G}\tau(r,s,g)e_r g\otimes e_s g$. Thus we have
\begin{align*}
\Delta^{cop}(g)R&=(\sum_{r,s\in G}\tau(s,r,g)e_r g\otimes e_s g)R\\
&=(\sum_{r,s\in G}\tau(s,r,g)e_r g\otimes e_s g)(\sum_{c,d\in G}w(c,d)e_c \otimes e_d)\\
&=\sum_{r,s,c,d\in G}\tau(s,r,g)w(c,d)e_r e_{c\triangleleft g^{-1}}g\otimes e_s e_{d\triangleleft g^{-1}}g\\
&=\sum_{r,s\in G}\tau(s,r,g)w(r\triangleleft g,s\triangleleft g)e_r g\otimes e_s g
\end{align*}
and $R\Delta(g)=\sum_{r,s\in G}\tau(r,s,g)w(r,s)e_r g\otimes e_s g$. Therefore $w(r\triangleleft g,s\triangleleft g)=w(r,s)\eta(r,s,g)$ for $r,s\in G$, where $\eta(r,s,g)=\tau(r,s,g)\tau(s,r,g)^{-1}$. By definition, we know $\tau(-,-,g)$ is 2-cocycle. Moreover, because $G$ is an abelian group, we get $\eta(-,-,g)$ is bicharacter on $G$. Thus $w(r\triangleleft g,s\triangleleft g)=w(r,s)\eta(r,s,g)$ for $r,s\in G$ if and only if they hold for generators of $G$, i.e they hold for $r,s\in \{a,b\}$. For convenience, we use the equivalent form of the equations $w(r\triangleleft g,s\triangleleft g)=w(r,s)\eta(r,s,g)$ for $r,s\in \{a,b\}$, which is $w(r,s)=w(r\triangleleft g^{-1},s\triangleleft g^{-1})\eta(r\triangleleft g^{-1},s\triangleleft g^{-1},g)$ for $r,s\in \{a,b\}$. And hence we have
\begin{align}
\label{eq2.2}w(a,a)=w(a^m,a^m), \;w(a,b)=w(a^m,b^{m^\lambda})\eta(a^m,b^{m^\lambda},g),\\
\label{eq2.3} w(b,a)=w(b^{m^\lambda},a^m)\eta(b^{m^\lambda},a^m,g),\;w(b,b)=w(b^{m^\lambda},b^{m^\lambda}).
\end{align}
Conversely, assume $w$ be a bicharacter on $G$ such that the equations \eqref{eq2.2}-\eqref{eq2.3}, then we can define $R=\sum_{r,s\in G}w(r,s)e_r \otimes e_s$. Due to the equations \eqref{eq2.2}-\eqref{eq2.3}, we get $\Delta^{cop}(g)R=R\Delta(g)$. Because $R$ is invertible and $\{e_h,g|\;h\in G\}$ generates $\mathscr{B}_{\lambda}$ as algebra, we know $\Delta^{cop}(x)R=R\Delta(x)$ for $x\in \mathscr{B}_{\lambda}$. Therefore $R$ is a quasitriangular structure on $\mathscr{B}_{\lambda}$.
\end{proof}

\textbf{Proofs of Theorems \ref{thm1.1} and \ref{thm1.2}.} Due to Corollary \ref{cor4.2} and Propositions \ref{pro3.7},\ref{pro3.8} and \ref{pro3.10},\ref{pro3.11}, we know Theorems \ref{thm1.1} and \ref{thm1.2} hold.\qed

\end{document}